\newtheorem{theorem}{Theorem}[section]
\newtheorem{lemma}[theorem]{Lemma}
\newtheorem{proposition}[theorem]{Proposition}
\newtheorem{definition}[theorem]{Definition}
\newtheorem{corollary}[theorem]{Corollary}
\newtheorem{example}[theorem]{Example}
\newtheorem{question}[theorem]{Question}
\theoremstyle{definition}
\newtheorem{remark}[theorem]{Remark}
\newenvironment{dlist}{
\begin{list}{}{\usecounter{nul}%
\setlength{\labelwidth}{6.55em}\setlength{\leftmargin}{7em}}}{\end{list}}
\newcommand{\ACst}{\hbox{$\textsf{AbC}^*$}}
\DeclareMathOperator{\bbC}{\mathbb{C}}
\DeclareMathOperator{\bbF}{\mathbb{F}}
\DeclareMathOperator{\bbN}{\mathbb{N}}
\DeclareMathOperator{\bbR}{\mathbb{R}}
\DeclareMathOperator{\A}{\mathcal{A}}
\DeclareMathOperator{\Alg}{\operatorname{Alg}}
\DeclareMathOperator{\B}{\mathcal{B}}
\DeclareMathOperator{\Ball}{\operatorname{Ball}}
\DeclareMathOperator{\C}{\mathcal{C}}
\DeclareMathOperator{\D}{\mathcal{D}}
\DeclareMathOperator{\Exp}{\operatorname{CE}}
\DeclareMathOperator{\Ext}{\operatorname{Ext}}
\renewcommand{\H}{\mathcal{H}}
\DeclareMathOperator{\I}{\mathcal{I}}
\DeclareMathOperator{\id}{\operatorname{id}}
\DeclareMathOperator{\ideal}{\lhd}
\DeclareMathOperator{\J}{\mathcal{J}}
\DeclareMathOperator{\K}{\mathcal{K}}
\renewcommand{\L}{\mathcal{L}}
\DeclareMathOperator{\Lin}{\operatorname{Lin}}
\DeclareMathOperator{\M}{\mathcal{M}}
\DeclareMathOperator{\N}{\mathcal{N}}
\let\slasho=\o 
\renewcommand{\o}{\overline}
\newcommand{\OS}{\hbox{$\textsf{OSys}_1$}}
\DeclareMathOperator{\PsExp}{\operatorname{PsExp}}
\DeclareMathOperator{\ran}{\operatorname{ran}}
\renewcommand{\S}{\mathcal{S}}
\renewcommand{\span}{\operatorname{span}}
\DeclareMathOperator{\supp}{\operatorname{supp}}
\DeclareMathOperator{\UCP}{\operatorname{UCP}}
\DeclareMathOperator{\X}{\mathcal{X}}
\DeclareMathOperator{\Z}{\mathcal{Z}}
\newcommand{\cstar}{\hbox{$C^*$}}
\newcommand{\cstaralg}{$C^*$-algebra}
\begin{document}

\title[Unique Pseudo-Expectations]{Unique Pseudo-Expectations for
  $C^*$-Inclusions} 
\author{David R. Pitts} 
\address{University of
  Nebraska--Lincoln} \email{dpitts2@math.unl.edu} 
\author{Vrej Zarikian} 
\address{U. S. Naval Academy} \email{zarikian@usna.edu}
\thanks{This work was partially supported by a grant from the Simons
  Foundation (\#316952 to David Pitts).}  
\dedicatory{Dedicated to
  E. G. Effros on the occasion of his 80th birthday}
\subjclass[2010]{Primary: 46L05, 46L07, 46L10 Secondary: 46M10}
\keywords{Pseudo-expectation, conditional expectation, completely
  positive, injective envelope, Krein-Milman, norming}

\begin{abstract} Given an inclusion $\mathcal D \subseteq \mathcal C$
of unital $C^*$-algebras (with common unit), a unital completely positive
linear map $\Phi$ of $\mathcal C$ into the injective envelope
$I(\mathcal D)$ of $\mathcal D$ which extends the inclusion of
$\mathcal D$ into $I(\mathcal D)$ is a pseudo-expectation.
Pseudo-expectations are generalizations of conditional expectations,
but with the advantage that they always exist.  The set
$\operatorname{PsExp}(\mathcal C,\mathcal D)$ of all
pseudo-expectations is a convex set, and when $\mathcal D$ is abelian,
we prove a Krein-Milman type theorem showing that
$\operatorname{PsExp}(\mathcal C,\mathcal D)$ can be recovered from
its set of extreme points.  When $\mathcal C$ is abelian, the extreme
pseudo-expectations coincide with the homomorphisms of $\mathcal C$
into $I(\mathcal D)$ which extend the inclusion of $\mathcal D$ into
$I(\mathcal D)$, and these are in bijective correspondence with the
ideals of $\mathcal C$ which are maximal with respect to having
trivial intersection with $\mathcal D$.

  In general, $\operatorname{PsExp}(\mathcal C,\mathcal D)$ is not a
singleton.  However there are large and natural classes of inclusions
(e.g., when $\mathcal D$ is a regular MASA in $\mathcal C$) such that
there is a unique pseudo-expectation.  Uniqueness of the
pseudo-expectation typically implies interesting structural properties
for the inclusion.  For example, we show that when $\mathcal D
\subseteq \mathcal C \subseteq \mathcal B(\mathcal H)$ are von Neumann
algebras, uniqueness of the pseudo-expectation implies that $\mathcal
D' \cap \mathcal C$ is the center of $\mathcal D$; moreover, when
$\mathcal H$ is separable and $\mathcal D$ is abelian, we are able to
characterize which von Neumann algebra inclusions have the unique
pseudo-expectation property.

  For general inclusions of $C^*$-algebras with $\mathcal D$ abelian, we
give a characterization of the unique pseudo-expectation property in
terms of order structure; and when $\mathcal C$ is abelian, we are
able to give a topological description of the unique
pseudo-expectation property.

  As applications, we show that if an inclusion $\mathcal D \subseteq
\mathcal C$ has a unique pseudo-expectation $\Phi$ which is also
faithful, then the $C^*$-envelope of any operator space $\mathcal X$ with
$\mathcal D \subseteq \mathcal X \subseteq \mathcal C$ is the
$C^*$-subalgebra of $\mathcal C$ generated by $\mathcal X$; we also
show that for many interesting classes of $C^*$-inclusions, having a
faithful unique pseudo-expectation implies that $\mathcal D$ norms
$\mathcal C$, although this is not true in general.  We provide a
number of examples to illustrate the theory, and conclude with several
unresolved questions.
\end{abstract}
\maketitle



\section{Introduction}

The goal of this paper is to investigate the unique pseudo-expectation property for $C^*$-inclusions. A \emph{$C^*$-inclusion} is a pair $(\C,\D)$ of unital $C^*$-algebras with $\D \subseteq \C$ and which have
the same unit. For any unital $C^*$-algebra $\D$, there exists an \emph{injective envelope} $I(\D)$ for $\D$ \cite{Hamana1979}. That is, $I(\D)$ is an injective object in the category $\OS$ of operator systems and unital completely positive (ucp) maps, which contains $\D$, and which is minimal with respect to these two properties. In fact, $I(\D)$ is a $C^*$-algebra and $\D \subseteq I(\D)$ is a $C^*$-subalgebra. A \emph{pseudo-expectation} is a ucp map $\Phi:\C \to I(\D)$ which extends the identity map on $\D$.

Pseudo-expectations are natural generalizations of conditional expectations, and due to injectivity, have the distinct advantage that they are guaranteed to exist for any $C^*$-inclusion. Pseudo-expectations were introduced by Pitts in \cite{Pitts2012} and were used there as a replacement for conditional expectations in settings where no conditional expectation exists.

One significant difference between conditional expectations and pseudo-expectations arises when one attempts to iterate these
maps. For a conditional expectation $E:\C \to \D$, we have that $E \circ E = E$ (i.e., a conditional expectation is an idempotent
map). For a pseudo-expectation $\Phi:\C \to I(\D)$, the composition $\Phi \circ \Phi$ is typically undefined, since $I(\D)$ is usually not
contained in $\C$. This technical difficulty of pseudo-expectations is far outweighed by the aforementioned benefit, that pseudo-expectations
always exist for any $C^*$-inclusion.

We view the uniqueness and faithfulness properties of pseudo-expectations as giving a measure of the relative size of a subalgebra inside the
containing algebra. To orient the reader with this philosophy, we begin by explaining how the unique pseudo-expectation property
fits with the program of deciding when a $C^*$-subalgebra is large/substantial/rich in its containing $C^*$-algebra.

\subsection{Large Subalgebras}

Let $(\C,\D)$ be a $C^*$-inclusion. There are many ways of expressing that $\D$ is ``large'' (or ``substantial'', or ``rich'') in $\C$. For
example:
\begin{dlist}
\item[(\textsf{ARC})] The \emph{relative commutant} $\D^c = \D' \cap \C$ is abelian.
\item[(\textsf{Reg})] $\D$ is \emph{regular} in $\C$, meaning that $\o{\span}(N(\C,\D)) = \C$, where
\[
    N(\C,\D) = \{x \in \C: x\D x^* \subseteq \D, ~ x^*\D x \subseteq \D\}
\]
are the \emph{normalizers} of $\D$ in $\C$.
\item[(\textsf{Ess})] $\D$ is \emph{essential} in $\C$, meaning that every nontrivial closed two-sided ideal of $\C$ intersects $\D$ nontrivially.
\item[(\textsf{UEP})] $\C$ has the \emph{unique extension property} relative to $\D$, meaning that every pure state on $\D$ extends uniquely to a pure state on $\C$.
\item[(\textsf{Norming})] $\D$ \emph{norms} $\C$, meaning that for all $X \in M_{d \times d}(\C)$,
\[
    \|X\| = \sup\{\|RXC\|: R \in \Ball(M_{1 \times d}(\D)), ~ C \in \Ball(M_{d \times 1}(\D))\}.
\]
\end{dlist}
Some of these conditions are purely algebraic, others purely analytic, and yet others somewhere in between. Each of them has
advantages and disadvantages, and their relative merits vary by context. Indeed, two desirable properties for any condition which
``measures'' the largeness of $\D$ in $\C$ are the following:
\begin{itemize}
\item \emph{Hereditary from above}: If $\D$ is large in $\C$ and $\D \subseteq \C_0 \subseteq \C$ is a $C^*$-algebra, then $\D$ is large in $\C_0$.
\item \emph{Hereditary from below}: If $\D$ is large in $\C$ and $\D \subseteq \D_0 \subseteq \C$ is a $C^*$-algebra, then $\D_0$ is large in $\C$.
\end{itemize}
The following table shows which of these hereditary properties the various types of inclusions possess (an entry marked ``?''
indicates we do not know whether the property holds). Only conditions (\textsf{ARC}) and (\textsf{Norming}) are known to the
authors to be both hereditary from above and below:
\[
    \begin{tabular}{| c | c | c |} \hline
        Condition & Hereditary & Hereditary\\
        & from above & from below\\ \hline
        \textsf{ARC} & yes & yes \\ \hline
        \textsf{Reg} & ? & no \\ \hline
        \textsf{Ess} & no & yes \\ \hline
        \textsf{UEP} & yes & ? \\ \hline
        \textsf{Norming} & yes & yes \\ \hline
    \end{tabular}
\]
On the other hand, as the following example shows, condition (\textsf{Ess}) works the best for the particular class of abelian
inclusions, in spite of its general shortcomings.

\begin{example}
Suppose $(\A,\D) = (C(Y),C(X))$ is an abelian inclusion, with corresponding continuous surjection $j:Y \to X$. Then
\begin{itemize}
\item $(\A,\D)$ always satisfies (\textsf{ARC}).
\item $(\A,\D)$ always satisfies (\textsf{Reg}).
\item $(\A,\D)$ satisfies (\textsf{Ess}) $\iff$ the only closed set $K
  \subseteq Y$ such that $j(K) = X$ is $Y$ itself.
\item $(\A,\D)$ satisfies (\textsf{UEP}) $\iff$ $\A = \D$.
\item $(\A,\D)$ always satisfies (\textsf{Norming}).
\end{itemize}
\end{example}

\subsection{Unique Expectations}

If $\D$ is large in $\C$, then there should not be many ways to project $\C$ onto $\D$. The most natural way to project a $C^*$-algebra $\C$ onto a $C^*$-subalgebra $\D$ is via a \emph{conditional expectation}. Recall that a conditional expectation for $(\C,\D)$ is a ucp map $E:\C \to \D$ such that $E|_{\D} = \id$. A conditional expectation $E:\C \to \D$ is said to be \emph{faithful} if $E(x^*x) = 0$ implies $x = 0$ (i.e., if $E$ is faithful as a ucp map). Any convex combination of conditional expectations for $(\C,\D)$ is again a conditional expectation for
$(\C,\D)$. Thus a $C^*$-inclusion has either zero, one, or uncountably many conditional expectations, and all three possibilities can occur.

In light of the previous discussion, it is reasonable to propose the following property as yet another expression of the largeness of $\D$
in $\C$:
\begin{dlist}
\item[(\textsf{!CE})] There is at most one conditional expectation $E:\C \to \D$.
\end{dlist}
The utility of this property is seriously limited in two ways. First, for many naturally arising $C^*$-inclusions, there are no conditional
expectations at all; and second, as the next two examples show, (\textsf{!CE}) fails to be hereditary from above or below.

\begin{example} \label{!CE not hereditary from above}
Consider the $C^*$-inclusions
\[
    C[0,1] \subseteq C([0,1] \times [0,1]) \subseteq B(L^2([0,1] \times [0,1])),
\]
where the first inclusion corresponds to the continuous surjection
\[
    j:[0,1] \times [0,1] \to [0,1]:(s,t) \mapsto s.
\]
Then there are no conditional expectations for the inclusion
$(B(L^2([0,1] \times [0,1])),C[0,1])$, since $C[0,1]$ is not injective
(in the category $\OS$). But there are infinitely many conditional expectations for the
inclusion $(C([0,1] \times [0,1]),C[0,1])$. Indeed,
\[
    E_t:C([0,1] \times [0,1]) \to C[0,1]: g \mapsto g(\cdot,t)
\]
is a conditional expectation for each $t \in [0,1]$. Thus
(\textsf{!CE}) is not hereditary from above.
\end{example}

\begin{example} \label{!CE not hereditary from below}
Likewise, consider the $C^*$-inclusions
\[
    C[0,1] \subseteq L^\infty[0,1] \subseteq B(L^2[0,1]).
\]
Then there are no conditional expectations for the inclusion
$(B(L^2[0,1]),C[0,1])$, but infinitely many conditional expectations
for the inclusion $(B(L^2[0,1]),L^\infty[0,1])$
\cite{KadisonSinger1959}. Thus (\textsf{!CE}) is not hereditary from
below.
\end{example}

\subsection{Unique Pseudo-Expectations}

Recall that a ucp map $\Phi:\C\rightarrow I(\D)$ is a
pseudo-expectation if it extends the inclusion of $\D$ into
$I(\D)$. Clearly every conditional expectation for $(\C,\D)$ is a
pseudo-expectation for $(\C,\D)$, so pseudo-expectations generalize
conditional expectations. But pseudo-expectations always exist for any
$C^*$-inclusion.

With the discussion of the previous section in mind, we are led to replace condition (\textsf{!CE}) there by the following stronger condition:
\begin{dlist}
\item[(\textsf{!PsE})] There exists a unique pseudo-expectation for $(\C,\D)$.
\end{dlist}
Or perhaps by the even stronger condition:
\begin{dlist}
\item[(\textsf{f!PsE})] There exists a unique pseudo-expectation for $(\C,\D)$, which is faithful.
\end{dlist}
We will see shortly that both of these conditions are hereditary from above (Proposition \ref{hereditary from above}). Compelling evidence
that (\textsf{!PsE}) and (\textsf{f!PsE}) are closely related to the largeness of $\D$ in $\C$ is provided by a striking result from
\cite{Pitts2012}:

\begin{theorem}[Pitts] \label{Pitts} Let $(\C,\D)$ be a regular
inclusion with $\D$ a MASA in $\C$.
\begin{enumerate}
\item Then there exists a unique pseudo-expectation $\Phi:\C \to I(\D)$.
\item If $\L_\Phi = \{x \in \C: \Phi(x^*x) = 0\}$ is the left kernel of $\Phi$, then $\L_\Phi$ is the unique maximal $\D$-disjoint
ideal in $\C$.
\item If $\Phi$ is faithful (i.e., if $\L_\Phi = 0$), then $\D$ norms $\C$.
\end{enumerate}
\end{theorem}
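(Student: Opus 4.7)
Existence of a pseudo-expectation is immediate from injectivity of $I(\D)$; the content lies in (i) uniqueness, (ii) the characterization of $\L_\Phi$, and (iii) norming.

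For (i), my plan is to reduce to normalizers and then pin $\Phi(v)$ down via rigidity of $\D \subseteq I(\D)$. By regularity, $\overline{\span}\,N(\C,\D) = \C$, so any pseudo-expectation $\Phi$ is determined by $\{\Phi(v)\}_{v \in N(\C,\D)}$. For each $v \in N(\C,\D)$, three inputs constrain $\Phi(v)$: (a) $\D$ lies in the multiplicative domain of any ucp extension of $\id_\D$, so $\Phi(dv) = d\Phi(v)$ and $\Phi(vd) = \Phi(v)d$; (b) by Hamana's theorem $I(\D)$ is abelian (since $\D$ is), so $d$ and $\Phi(v)$ commute in $I(\D)$; and (c) the normalizer identity $dv = v\,\beta_v(d)$ on the corner $vv^*\D$, where $\beta_v(d) = v^*dv$. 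Together these force $(d - \beta_v(d))\Phi(v) = 0$ for all $d \in vv^*\D$. Combined with $\Phi(v)^*\Phi(v) \leq v^*v$ and $\Phi(v)\Phi(v)^* \leq vv^*$ (Kadison--Schwarz), and with the rigidity of $\D \subseteq I(\D)$ which extends $\beta_v$ uniquely to the corresponding corner of $I(\D)$, these constraints pin $\Phi(v)$ down as the unique element of $I(\D)$ compatible with them. The MASA hypothesis is exactly what makes the corner $vv^*\D$ rich enough for these relations to be determining. Formalizing this last uniqueness step (by passing to the Stonean spectrum of $I(\D)$ and tracking the support of $\Phi(v)$ under the partial homeomorphism dual to the extension of $\beta_v$) is the main obstacle.

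For (ii), I would first show that every closed two-sided $\D$-disjoint ideal $J$ of $\C$ is contained in $\L_\Phi$. The inclusion $\D \hookrightarrow \C/J$ is isometric, so by injectivity of $I(\D)$ it extends to a ucp map $\C/J \to I(\D)$; composing with the quotient yields a pseudo-expectation, which by (i) equals $\Phi$, so $\Phi|_J = 0$ and hence $J \subseteq \L_\Phi$. To conclude that $\L_\Phi$ is itself the (necessarily unique) maximal $\D$-disjoint ideal, one must show $\L_\Phi$ is a $\D$-disjoint two-sided ideal. The $\D$-disjointness is immediate ($d \in \D \cap \L_\Phi$ forces $d^*d = \Phi(d^*d) = 0$). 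The two-sided property is the second main obstacle, since $\L_\Phi$ is a priori only a closed left ideal; I would verify it directly using the explicit normalizer description from (i), checking that $\Phi((xv)^*(xv)) = 0$ whenever $x \in \L_\Phi$ and $v \in N(\C,\D)$, and then extending to arbitrary right multiplication by density of $\span N(\C,\D)$ in $\C$.

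For (iii), the plan is to combine the normalizer/partial-homeomorphism picture from (i) with faithfulness to show that $\D$-valued row and column compressions recover matricial norms from $\C$. Faithfulness provides tight control on the positive cone of $\C$ via $\Phi$, although $\Phi$ itself is typically not isometric (as $\C$ is generally non-abelian while $I(\D)$ is abelian). The strategy is: for $X \in M_d(\C)$, push through the amplification $\Phi^{(d)}$ to $M_d(I(\D))$, where pointwise analysis on the Stonean spectrum $\widetilde Y$ of $I(\D)$ is available; use a Urysohn-type argument to choose rows/columns in $\D$ taking prescribed unit-vector values at chosen points of $\widetilde Y$, thereby realizing the pointwise operator norm of $\Phi^{(d)}(X)$ via $\D$-compressions; and combine with the multiplicative-domain bimodularity $R\,\Phi^{(d)}(X)\,C = \Phi(RXC)$ to obtain $\sup_{R,C}\|RXC\| \geq \|\Phi^{(d)}(X)\|$. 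The central technical obstacle is the passage from $\|\Phi^{(d)}(X)\|$ back to $\|X\|$; this requires leveraging faithfulness together with the partial-isometry rigidity of (i) to close the estimate, and is the delicate heart of the theorem.
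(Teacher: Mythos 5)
You should first note that the paper contains no proof of Theorem \ref{Pitts}: it is imported verbatim from \cite{Pitts2012} and used as a black box, so your proposal can only be measured against Pitts' original argument. Your outline for part (i) is in the right spirit --- the actual proof does work normalizer by normalizer, does use that $I(\D)$ is abelian and monotone complete, and does analyze the partial homeomorphism of the spectrum of $I(\D)$ dual to the extension of $\beta_v$, concluding that at each point of the spectrum $\sigma(\Phi(v))$ either must vanish (where the homeomorphism moves $\sigma$) or is forced by multiplicativity (where it fixes $\sigma$). But in each of the three parts you have labelled the decisive step an ``obstacle'' and left it unexecuted, and these are not routine verifications: they are the theorem. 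For (ii), note that two-sidedness of $\L_\Phi$ is exactly what fails in the absence of regularity (see Remark \ref{not ideal} and Example \ref{Calkin}, where $\C$ is simple yet $\L_\Phi\neq 0$), so the assertion that one can ``verify it directly'' from the normalizer description is precisely the content that must be supplied; the needed inclusion $v^*\L_\Phi v\subseteq \L_\Phi$ for $v\in N(\C,\D)$ does not follow from bimodularity, since $v\notin\D$.

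The most serious problem is part (iii). Your closing estimate $\sup_{R,C}\|RXC\|\geq\|\Phi^{(d)}(X)\|$ recovers only the norm of the image of $X$ under the amplified pseudo-expectation, and this is generically far smaller than $\|X\|$: $\Phi$ annihilates every normalizer whose associated partial homeomorphism acts freely, so for $X\in M_{d\times d}(\C)$ supported off the ``diagonal'' part of $\C$ one can have $\Phi^{(d)}(X)=0$ while $\|X\|$ is large. Faithfulness is a statement about the positive cone ($\Phi(x^*x)=0\Rightarrow x=0$) and carries no quantitative norm information --- a faithful ucp map onto an abelian algebra can shrink norms arbitrarily --- so ``leveraging faithfulness to close the estimate'' does not describe an argument. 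Pitts' proof of norming is an independent and substantially longer argument than what you sketch, and it is not obtained by pushing $X$ through $\Phi^{(d)}$ and working on the spectrum of $I(\D)$. As it stands, all three parts of the proposal terminate exactly where the proof would have to begin.
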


Rephrasing Theorem \ref{Pitts} using the notation of this section, statement (i) says that for a $C^*$-inclusion $(\C,\D)$, with $\D$
maximal abelian,
\[
    \text{(\textsf{Reg})} \implies \text{(\textsf{!PsE})}.
\]
Statements (ii) and (iii) imply that under the same hypotheses,
\[
    \text{(\textsf{Reg})} \wedge \text{(\textsf{f!PsE})} \implies \text{(\textsf{Ess})} \wedge \text{(\textsf{Norming})}.
\]

This paper is  a systematic attempt to generalize Theorem \ref{Pitts}. We characterize the unique pseudo-expectation property for various important classes of $C^*$-inclusions, and we relate the unique pseudo-expectation property
for a $C^*$-inclusion $(\C,\D)$ to other measures of the largeness of $\D$ in $\C$, in particular conditions (\textsf{ARC}), (\textsf{Reg}),
(\textsf{Ess}), (\textsf{UEP}), and (\textsf{Norming}) above. Necessarily, we significantly develop the general theory of
pseudo-expectations along the way.

\section{The Unique Pseudo-Expectation Property}

\subsection{Definitions and Basic Properties} \label{basics section}

In this section we formally define pseudo-expectations and explore
their basic properties. Before doing so, we remind the reader of a few
facts about injective envelopes and establish some standing assumptions used
throughout the paper. All \cstar-algebras are assumed unital, and
homomorphisms between \cstaralg s will always be $*$-homomorphisms
which preserve the units. We will denote by $\OS$ the category whose
objects are operator systems and whose morphisms are ucp (unital
completely positive) maps. A \cstaralg\ is \textit{injective} if it is
injective when viewed as an object in \OS. Let $\ACst$ be the category
of abelian \cstar-algebras and homomorphisms. Clearly every object in
$\ACst$ is also an object in $\OS$. An important observation found in
\cite{Hamana1979a} and \cite{HadwinPaulsen2011} is that an abelian
\cstaralg\ is injective in $\ACst$ if and only if it is injective in
\OS.

\begin{theorem}[see \cite{EffrosRuan2000} or \cite{Paulsen2002}] \label{injEnv}
Let $\D$ be a unital \cstaralg. Then there exists a unital \cstaralg\ $\A$ and a unital $*$-monomorphism $\iota:\D \rightarrow \A$ with the following properties:
\begin{enumerate}
\item $\A$ is injective;
\item if $\S$ is an injective object in $\OS$ and $\tau:\D \rightarrow \S$ is a unital complete isometry, then there exists a unital
  complete isometry $\tau_1:\A \rightarrow \S$ such that $\tau = \tau_1 \circ \iota$.
\end{enumerate}
\end{theorem}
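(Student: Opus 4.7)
The plan is to follow Hamana's classical construction. First, faithfully represent $\D$ as a unital $C^*$-subalgebra of $\B(\H)$ for some Hilbert space $\H$; since $\B(\H)$ is injective in $\OS$, this provides an ambient injective operator system containing $\D$. Let
\[
\F = \{\phi:\B(\H) \to \B(\H) : \phi \text{ is ucp with } \phi|_\D = \id\}.
\]
This set is non-empty (it contains $\id_{\B(\H)}$), convex, and compact in the point-ultraweak topology by Banach--Alaoglu. Introduce a preorder $\phi \preceq \psi$ iff $\|\phi(x)\| \leq \|\psi(x)\|$ for every $x \in \B(\H)$, and apply Zorn's lemma to descending chains of the associated seminorms $p_\phi(x) := \|\phi(x)\|$ to produce a $\preceq$-minimal element $\phi_0 \in \F$.

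The main technical step, and the chief obstacle, is to prove that any such minimal $\phi_0$ is idempotent, $\phi_0 \circ \phi_0 = \phi_0$. One first observes that $\phi_0 \circ \phi_0 \in \F$ and $\|\phi_0(\phi_0(x))\| \leq \|\phi_0(x)\|$, so $\phi_0 \circ \phi_0 \preceq \phi_0$; minimality then forces equality of the induced seminorms. Promoting this seminorm equality to a genuine equality of maps requires the standard rigidity argument: if one had $\phi_0(\phi_0(x_0)) \neq \phi_0(x_0)$ at some $x_0$, suitable convex combinations of $\phi_0$ and $\phi_0 \circ \phi_0$ inside $\F$ would yield a member of $\F$ whose seminorm strictly dominates $p_{\phi_0}$ at $x_0$, contradicting minimality. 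Once $\phi_0$ is shown to be an idempotent ucp map, set $\A := \phi_0(\B(\H))$ endowed with the Choi--Effros product $a \cdot b := \phi_0(ab)$; by the Choi--Effros theorem, $\A$ thereby becomes a unital $C^*$-algebra containing $\D$ as a $C^*$-subalgebra via the natural inclusion $\iota:\D \hookrightarrow \A$, and $\phi_0:\B(\H) \to \A$ is a ucp retraction.

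For (i), injectivity of $\A$ follows from that of $\B(\H)$: any ucp map into $\A \subseteq \B(\H)$ defined on a subsystem of a larger operator system extends to $\B(\H)$ by injectivity, whereupon post-composition with $\phi_0$ returns the range to $\A$. For (ii), given an injective operator system $\S$ and a unital complete isometry $\tau:\D \to \S$, injectivity of $\S$ produces a ucp extension $\tau_1:\A \to \S$ of $\tau$, while injectivity of $\A$ produces a ucp extension $\sigma:\S \to \A$ of $\tau^{-1}:\tau(\D) \to \D \subseteq \A$. The composition $\sigma \circ \tau_1:\A \to \A$ is a ucp map fixing $\D$ pointwise, and a second application of the rigidity argument (applied to $\phi_0 \circ (\sigma \circ \tau_1) \circ \phi_0 \in \F$, which dominates $\phi_0$ in the $\preceq$-sense and hence equals $\phi_0$ by minimality) forces $\sigma \circ \tau_1 = \id_\A$. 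Since $\tau_1$ is ucp and $\sigma \circ \tau_1 = \id_\A$, a routine matrix-level estimate shows that $\tau_1$ is a complete isometry, establishing the universal property.
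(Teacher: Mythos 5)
The paper does not prove this theorem --- it is quoted verbatim from the cited references \cite{EffrosRuan2000}, \cite{Paulsen2002} --- so the only meaningful comparison is with the standard Hamana/Choi--Effros argument in those sources, which is exactly what you have reproduced: embed $\D$ unitally in $B(\H)$, minimize $\D$-seminorms via Zorn's lemma and point-weak$^*$ compactness, show that a minimal $\D$-map is an idempotent $\D$-projection, apply the Choi--Effros product to its range, and obtain injectivity and the universal property from rigidity. The architecture is correct and is the proof the paper is implicitly invoking. Two local points in your sketch are stated backwards, however, and as written those steps would not close. In the idempotency step you propose to produce a member of $\F$ ``whose seminorm strictly dominates $p_{\phi_0}$ at $x_0$, contradicting minimality''; a seminorm that dominates $p_{\phi_0}$ contradicts nothing --- minimality is only violated by a $\D$-seminorm that is $\leq p_{\phi_0}$ everywhere and strictly smaller somewhere. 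The device that actually works is a point-weak$^*$ cluster point $\psi$ of the Ces\`aro means $\frac{1}{n}\sum_{k=1}^{n}\phi_0^{k}$: each mean lies in $\F$ with seminorm $\leq p_{\phi_0}$, so $p_\psi = p_{\phi_0}$ by minimality, while telescoping gives $\psi\circ(\phi_0-\id)=0$, whence $\|\phi_0(\phi_0(x)-x)\| = p_\psi(\phi_0(x)-x)=0$ and $\phi_0\circ\phi_0=\phi_0$. The same reversal occurs in the universal-property step (``dominates $\phi_0$ in the $\preceq$-sense and hence equals $\phi_0$ by minimality''): the correct observation is that $(\sigma\circ\tau_1)\circ\phi_0$ is \emph{dominated} by $p_{\phi_0}$, and one runs the same Ces\`aro argument on its powers to conclude that $\sigma\circ\tau_1$ fixes $\phi_0(B(\H))$ pointwise. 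With those two directions corrected, your proof is complete and coincides with the one in the references the paper cites.
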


The pair $(\A,\iota)$ is called an \textit{injective envelope} for $\D$, and it is ``nearly'' unique. The ambiguity arises from the fact that in general, the choice of $\tau_1$ in Theorem \ref{injEnv} is not unique. However, in the sequel, we will assume that for a given \cstaralg\ $\D$ under discussion, a choice of injective envelope $(I(\D),\iota)$  has been made. Furthermore, we will regard $\iota$ as an inclusion map and suppress writing it. Thus we will always regard $\D$ as a $C^*$-subalgebra of $I(\D)$.

\begin{definition}
  A \textbf{pseudo-expectation} for the $C^*$-inclusion $(\C,\D)$ is a
  ucp map $\Phi:\C \to I(\D)$ such that $\Phi|_{\D} = \id$. We denote
  by $\PsExp(\C,\D)$ the collection of all pseudo-expectations for
  $(\C,\D)$.
\end{definition}

\begin{proposition} \label{basic properties Phi} Let $(\C,\D)$ be a
  $C^*$-inclusion, $\Phi \in \PsExp(\C,\D)$, and  \[\L_\Phi = \{x \in
  \C: \Phi(x^*x) = 0\}\] be the left kernel of $\Phi$. Then the
  following statements hold:
\begin{enumerate}
\item $\Phi$ is a $\D$-bimodule map. That is, $\Phi(d_1xd_2) =
  d_1\Phi(x)d_2$ for all $x \in \C$, $d_1, d_2 \in \D$.
\item $\L_\Phi$ is a closed left ideal in $\C$ which intersects $\D$
  trivially. Furthermore, $\L_\Phi$ is a right $\D$-module.
\end{enumerate}
\end{proposition}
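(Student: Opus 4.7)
The plan is to deduce part (i) from Choi's multiplicative domain theorem, and then use (i) plus the Kadison--Schwarz inequality to dispatch part (ii).

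For (i), recall that the multiplicative domain of a ucp map $\Phi:\C\to I(\D)$ is
\[
\M(\Phi) = \{a \in \C : \Phi(a^*a) = \Phi(a)^*\Phi(a) \text{ and } \Phi(aa^*) = \Phi(a)\Phi(a)^*\},
\]
and Choi's theorem says that on $\M(\Phi)$ the map $\Phi$ is a $*$-homomorphism and satisfies $\Phi(ax) = \Phi(a)\Phi(x)$ and $\Phi(xa) = \Phi(x)\Phi(a)$ for every $x\in\C$, $a\in\M(\Phi)$. Since $\Phi|_\D = \id$, we have $\Phi(d^*d) = d^*d = \Phi(d)^*\Phi(d)$ and $\Phi(dd^*) = dd^* = \Phi(d)\Phi(d)^*$ for each $d\in\D$, so $\D\subseteq\M(\Phi)$. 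The desired bimodule identity $\Phi(d_1 x d_2) = d_1\Phi(x)d_2$ now follows immediately.

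For (ii), closedness of $\L_\Phi$ is immediate from continuity of the map $x\mapsto\Phi(x^*x)$. For the left ideal property, apply the operator inequality $x^*a^*ax \leq \|a\|^2 x^*x$ (valid since $a^*a\leq\|a\|^2\cdot 1$) and positivity of $\Phi$: if $x\in\L_\Phi$ and $a\in\C$, then
\[
0 \leq \Phi\bigl((ax)^*(ax)\bigr) = \Phi(x^*a^*ax) \leq \|a\|^2 \Phi(x^*x) = 0,
\]
so $ax\in\L_\Phi$. Trivial intersection with $\D$ is automatic: if $d\in\D\cap\L_\Phi$, then $d^*d = \Phi(d^*d) = 0$ and hence $d=0$. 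Finally, the right $\D$-module property uses (i): for $x\in\L_\Phi$ and $d\in\D$,
\[
\Phi\bigl((xd)^*(xd)\bigr) = \Phi(d^*x^*xd) = d^*\Phi(x^*x)d = 0,
\]
so $xd\in\L_\Phi$.

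No real obstacle arises here; the statement is a routine unpacking of the multiplicative domain theorem together with the Schwarz-type inequality for positive maps. The only point worth flagging is that $\L_\Phi$ is generally \emph{not} a two-sided ideal of $\C$ (it is only a right $\D$-module, not a right $\C$-module), which is why the proposition is stated asymmetrically; this asymmetry is what makes $\L_\Phi$ a useful invariant of the inclusion later in the paper.
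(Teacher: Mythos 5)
Your proof is correct and follows the same route as the paper, which simply cites Choi's Lemma (the multiplicative domain theorem) for (i) and declares (ii) straightforward; your write-up is just the expected unpacking of those two steps. The only item you leave implicit is that $\L_\Phi$ is closed under addition (needed for it to be an ideal), which follows from $(x+y)^*(x+y) \leq 2x^*x + 2y^*y$ and positivity of $\Phi$ — a one-line addition worth recording.
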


\begin{proof}
The first statement follows from Choi's Lemma (cf.~\cite[Corollary~3.19]{Paulsen2002}); the second is straightforward.
\end{proof}

\begin{proposition} \label{basic properties PsExp} Let $(\C,\D)$ be a
$C^*$-inclusion.
\begin{enumerate}
\item The collection $\PsExp(\C,\D)$ of all pseudo-expectations
for $(\C,\D)$ forms a nonempty convex subset of $\UCP(\C,I(\D))$, the
ucp maps from $\C$ into $I(\D)$. In fact, $\PsExp(\C,\D)$ is a face of
$\UCP(\C,I(\D))$. Thus any extreme point of $\PsExp(\C,\D)$ is an
extreme point of $\UCP(\C,I(\D))$.
\item If $\Exp(\C,\D)$ denotes the collection of all conditional
expectations for $(\C,\D)$, then $\Exp(\C,\D) \subseteq
\PsExp(\C,\D)$. Of course it can happen that $\Exp(\C,\D) =
\emptyset$, whereas $\PsExp(\C,\D) \neq \emptyset$, by injectivity.
\end{enumerate}
\end{proposition}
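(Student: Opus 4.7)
The plan is to address the four subclaims (nonempty, convex, face, and $\Exp(\C,\D) \subseteq \PsExp(\C,\D)$) in turn. Nonemptiness is immediate from Theorem~\ref{injEnv}: since $I(\D)$ is injective in $\OS$, the identity map $\D \hookrightarrow I(\D)$ extends along the inclusion $\D \hookrightarrow \C$ to a ucp map $\Phi : \C \to I(\D)$ with $\Phi|_\D = \id$, i.e., a pseudo-expectation. Convexity is trivial: any convex combination of ucp maps extending a fixed map is itself such an extension.

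The only step of substance is the face property. Given $\Phi \in \PsExp(\C,\D)$ with a decomposition $\Phi = t\Phi_1 + (1-t)\Phi_2$, where $\Phi_1,\Phi_2 \in \UCP(\C,I(\D))$ and $t \in (0,1)$, I would show that each $\Phi_i$ restricts to the identity on $\D$ by chaining the Kadison-Schwarz inequality for $\Phi_1$ and $\Phi_2$ with the operator convexity of $x \mapsto x^*x$. For $d \in \D$ one obtains
\[
d^*d \;=\; t\Phi_1(d^*d) + (1-t)\Phi_2(d^*d) \;\geq\; t\Phi_1(d)^*\Phi_1(d) + (1-t)\Phi_2(d)^*\Phi_2(d) \;\geq\; \Phi(d)^*\Phi(d) \;=\; d^*d,
\]
where the second inequality is justified by the identity $t\alpha^*\alpha + (1-t)\beta^*\beta - (t\alpha + (1-t)\beta)^*(t\alpha + (1-t)\beta) = t(1-t)(\alpha-\beta)^*(\alpha-\beta) \geq 0$. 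Since the ends agree, equality holds throughout, and the same identity (with $\alpha = \Phi_1(d)$, $\beta = \Phi_2(d)$) forces $\Phi_1(d) = \Phi_2(d)$; combined with $t\Phi_1(d) + (1-t)\Phi_2(d) = \Phi(d) = d$, this gives $\Phi_i(d) = d$. The ``thus'' claim about extreme points is then formal: if $\Phi$ is extreme in $\PsExp(\C,\D)$ and $\Phi = t\Phi_1 + (1-t)\Phi_2$ with $\Phi_i \in \UCP(\C,I(\D))$, the face property places $\Phi_i \in \PsExp(\C,\D)$, and extremity within $\PsExp(\C,\D)$ forces $\Phi_1 = \Phi_2 = \Phi$.

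Part (ii) is essentially by definition: a conditional expectation $E : \C \to \D$ is ucp and satisfies $E|_\D = \id$, so post-composition with $\D \hookrightarrow I(\D)$ produces an element of $\PsExp(\C,\D)$; and Example~\ref{!CE not hereditary from above} already exhibits an inclusion with $\Exp(\C,\D) = \emptyset$, whereas $\PsExp(\C,\D)$ is nonempty by part (i). The only real obstacle in the whole proposition is keeping the Schwarz/operator-convexity chain in part (i) honest so as to extract the equality $\Phi_1(d) = \Phi_2(d)$; everything else is formal.
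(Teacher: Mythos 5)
Your proof is correct, but the key step---the face property---is argued by a genuinely different route than the paper's. The paper evaluates the decomposition $\Phi = \lambda\Phi_1 + (1-\lambda)\Phi_2$ at a unitary $u \in U(\D)$: since each $\Phi_i$ is unital and positive, hence contractive, $\Phi_1(u), \Phi_2(u) \in \Ball(I(\D))$, and since unitaries are extreme points of the closed unit ball of a unital $C^*$-algebra, the identity $u = \lambda\Phi_1(u) + (1-\lambda)\Phi_2(u)$ forces $\Phi_1(u) = \Phi_2(u) = u$; the conclusion on all of $\D$ then follows because $\D$ is the span of its unitaries. You instead chain the Kadison--Schwarz inequality for each $\Phi_i$ with the operator convexity of $x \mapsto x^*x$, squeeze $d^*d$ between itself, and extract $\Phi_1(d) = \Phi_2(d)$ from the equality case of the identity $t\alpha^*\alpha + (1-t)\beta^*\beta - (t\alpha+(1-t)\beta)^*(t\alpha+(1-t)\beta) = t(1-t)(\alpha-\beta)^*(\alpha-\beta)$. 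Both arguments are sound and of comparable length. Yours treats every $d \in \D$ directly and uses only $2$-positivity of the $\Phi_i$ (enough for the Schwarz inequality); the paper's uses only unitality and positivity but must route through the unitary span of $\D$. Your handling of the remaining claims (nonemptiness via injectivity, convexity, the formal extreme-point consequence, and part (ii), including the citation of the example with no conditional expectations) matches what the paper treats as routine and omits.
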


\begin{proof} We only prove that $\PsExp(\C,\D)$ is a face of
$\UCP(\C,I(\D))$. Indeed, suppose $\Phi \in \PsExp(\C,\D)$ and $\Phi =
\lambda\Phi_1 + (1-\lambda)\Phi_2$, where $\Phi_1, \Phi_2 \in
\UCP(\C,I(\D))$ and $\lambda \in (0,1)$. For any $u \in U(\D)$ (the
unitary group of $\D$), we have that
\[ u = \Phi(u) = \lambda\Phi_1(u) + (1-\lambda)\Phi_2(u).
\] Since $\Phi_1(u), \Phi_2(u) \in \Ball(I(\D))$ and $u \in U(\D)
\subseteq U(I(\D)) \subseteq \Ext(\Ball(I(\D))$, we conclude that
$\Phi_1(u) = \Phi_2(u) = u$. It follows that $\Phi_1(d) = \Phi_2(d) =
d$ for all $d \in \D$, so that $\Phi_1, \Phi_2 \in \PsExp(\C,\D)$.
\end{proof}

\begin{definition} We say that a $C^*$-inclusion $(\C,\D)$ has the
\textbf{unique pseudo-expectation property} (\textsf{!PsE}) if there
exists a unique $\Phi \in \PsExp(\C,\D)$. If, in addition, $\Phi$ is
faithful, then we say that $(\C,\D)$ has the \textbf{faithful unique
pseudo-expectation property} (\textsf{f!PsE}).
\end{definition}

As in the introduction, we say that a property of $C^*$-inclusions is
\emph{hereditary from above} if whenever $(\C,\D)$ has the property
and $\D \subseteq \C_0 \subseteq \C$ is a $C^*$-algebra, then
$(\C_0,\D)$ has the property.

\begin{proposition} \label{hereditary from above} The unique
pseudo-expectation property is hereditary from above, as is the
faithful unique pseudo-expectation property.
\end{proposition}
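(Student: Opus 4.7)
The plan is a short extension argument using injectivity of $I(\D)$. Assume $(\C,\D)$ has \textsf{!PsE} with unique pseudo-expectation $\Phi:\C\to I(\D)$, and fix an intermediate $C^*$-algebra $\D\subseteq \C_0\subseteq \C$. Existence of a pseudo-expectation for $(\C_0,\D)$ is automatic (take $\Phi|_{\C_0}$). For uniqueness, let $\Psi\in\PsExp(\C_0,\D)$ be arbitrary. Since $I(\D)$ is injective in $\OS$ and $\C_0\subseteq \C$ is a unital inclusion of operator systems, $\Psi$ extends to a ucp map $\widetilde{\Psi}:\C\to I(\D)$. Because $\Psi|_{\D}=\id$, also $\widetilde{\Psi}|_{\D}=\id$, so $\widetilde{\Psi}\in\PsExp(\C,\D)$. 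Uniqueness on $\C$ forces $\widetilde{\Psi}=\Phi$, and restricting back gives $\Psi=\Phi|_{\C_0}$. Thus $\Phi|_{\C_0}$ is the unique element of $\PsExp(\C_0,\D)$.

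For the faithful version, suppose in addition that $\Phi$ is faithful. If $x\in\C_0$ satisfies $\Phi|_{\C_0}(x^*x)=0$, then $\Phi(x^*x)=0$ in $\C$, so $x=0$. Hence $\Phi|_{\C_0}$ is faithful, and $(\C_0,\D)$ has \textsf{f!PsE}.

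There is no real obstacle here: the argument uses only the definition of a pseudo-expectation, the injectivity of $I(\D)$ (which supplies the extension $\widetilde{\Psi}$), and the fact that restriction preserves faithfulness. The only point to flag is that we are using injectivity in the operator-system category $\OS$ to extend a ucp map from the operator subsystem $\C_0$ to $\C$, which is exactly the content of Theorem \ref{injEnv}(i).
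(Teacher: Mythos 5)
Your argument is correct and is essentially identical to the paper's own proof: both extend an arbitrary pseudo-expectation on $\C_0$ to a ucp map on $\C$ via injectivity of $I(\D)$, note that the extension is still the identity on $\D$, invoke uniqueness on $\C$, and restrict back; the faithful case follows since restriction preserves faithfulness. Nothing to add.
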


\begin{proof} Suppose $\PsExp(\C,\D) = \{\Phi\}$. Let $\D \subseteq
\C_0 \subseteq \C$ be a $C^*$-algebra, and fix $\theta \in
\PsExp(\C_0,\D)$. By injectivity, there exists a ucp map $\Theta:\C
\to I(\D)$ such that $\Theta|_{\C_0} = \theta$. Then $\Theta|_{\D} =
\theta|_{\D} = \id$, so that $\Theta \in \PsExp(\C,\D)$. It follows
that $\Theta = \Phi$, which implies $\theta = \Theta|_{\C_0} =
\Phi|_{\C_0}$. Thus $\PsExp(\C_0,\D) = \{\Phi|_{\C_0}\}$. If $\Phi$ is
faithful, then so is $\Phi|_{\C_0}$.
\end{proof}

On the other hand, if $(\C,\D)$ has the unique pseudo-expectation
property and $\D \subseteq \D_0 \subseteq \C$ is a $C^*$-algebra, then
$(\C,\D_0)$ may not have the unique pseudo-expectation property (see
Example \ref{not hereditary from below}). That is, the unique
pseudo-expectation property is \underline{not} \emph{hereditary from
below}.

\subsection{Elementary Examples}

In this section we give some examples of $C^*$-inclusions with (and
without) the unique pseudo-expectation property. These examples are
``elementary'', insofar as we can prove that they are actually
examples without any additional technology. Later, after we have
developed some general theory for pseudo-expectations, we will give a
number of ``advanced'' examples.

\begin{example}[regular MASA inclusions] Let $(\C,\D)$ be a regular
MASA inclusion. Then $(\C,\D)$ has the unique pseudo-expectation
property, by Pitts' Theorem \ref{Pitts}. Two classes of regular MASA
inclusions which appear in the literature are \textbf{$C^*$-diagonals}
in the sense of Kumjian \cite{Kumjian1986}, and \textbf{Cartan
subalgebras} in the sense of Renault \cite{Renault2008}.

\end{example} \begin{example}[atomic MASA] \label{atomic MASA} The
inclusion $(B(\ell^2),\ell^\infty)$ has the faithful unique
pseudo-expectation property. Indeed, $\ell^\infty$ is injective (since
it is an abelian $W^*$-algebra) and there exists a unique conditional
expectation $E:B(\ell^2) \to \ell^\infty$, which is faithful
\cite[Theorem 1]{KadisonSinger1959}.
\end{example}

\begin{example}[diffuse MASA] \label{diffuse MASA} The inclusion
$(B(L^2[0,1]),L^\infty[0,1])$ has infinitely many pseudo-expectations,
none of which are faithful. On the other hand, the inclusion $(L^\infty[0,1]+K(L^2[0,1]),L^\infty[0,1])$ has a unique pseudo-expectation, which is not faithful.
\end{example}

\begin{proof}
Since $L^\infty[0,1]$ is injective, conditional expectations and pseudo-expectations for $(B(L^2[0,1]),L^\infty[0,1])$ are the same. By Theorem 2 and Remark 5 of \cite{KadisonSinger1959}, there are infinitely many conditional expectations $B(L^2[0,1]) \to L^\infty[0,1]$, all of which annihilate $K(L^2[0,1])$. Now suppose $E:L^\infty[0,1]+K(L^2[0,1]) \to L^\infty[0,1]$ is a conditional expectation. Then $E$ extends to a conditional expectation $\tilde{E}:B(L^2[0,1]) \to L^\infty[0,1]$. Thus, by the previous discussion,
\[
    E(d + h) = \tilde{E}(d + h) = d
\]
for all $d \in L^\infty[0,1]$, $h \in K(L^2[0,1])$.
\end{proof}

\begin{remark}
Let $(\C,\D)$ be a $C^*$-inclusion. Then we have $C^*$-inclusions $\D \subseteq \C \subseteq I(\C)$. By Theorem \ref{injEnv}, it follows that we have an operator system inclusion $I(\D) \subseteq I(\C)$. If $\D$ is abelian, then in fact we have a $C^*$-inclusion $I(\D) \subseteq I(\C)$ \cite[Thm. 2.21]{HadwinPaulsen2011}. In that case, if $\Phi:\C \to I(\D)$ is a pseudo-expectation for $(\C,\D)$, then it is not hard to see that any ucp extension $\tilde{\Phi}:I(\C) \to I(\D)$ of $\Phi$ is a conditional expectation for $(I(\C),I(\D))$. As the previous example shows, this extension need not be unique. Indeed, by \cite[Ex. 5.3]{Hamana1979}, $I(L^\infty[0,1]+K(L^2[0,1])) = B(L^2[0,1])$.
\end{remark}

Next we consider $C^*$-inclusions $(\C,\D)$ such that there is a monomorphism of $\C$ into $I(\D)$.  By \cite[Lemma 4.6]{Hamana1979}, these are precisely the operator space essential inclusions. A $C^*$-inclusion $(\C,\D)$ is \emph{operator space essential} (\textsf{OSE}) if every complete contraction $u:\C \to B(\H)$ which is completely isometric on $\D$ is actually completely isometric on $\C$.

\begin{example}[\textsf{OSE} inclusions] \label{OSE} Let $\D$ be an
  arbitrary unital $C^*$-algebra. Then $(I(\D),\D)$ has the faithful
  unique pseudo-expectation property. More generally, if $\D \subseteq
  \C \subseteq I(\D)$ are $C^*$-inclusions, then $(\C,\D)$ has the
  faithful unique pseudo-expectation property, by Proposition
  \ref{hereditary from above}.
\end{example}

\begin{proof}
  Let $\Phi \in \PsExp(I(\D),\D)$. Then $\Phi:I(\D) \to I(\D)$ is a
  ucp map such that $\Phi|_{\D} = \id$. By the \emph{rigidity} of the
  injective envelope, $\Phi = \id$.
\end{proof}

\begin{example}[\textsf{UEP} MASA inclusions] \label{UEP MASA} Let
$(\C,\D)$ be a $C^*$-inclusion, with $\D$ abelian. Assume that
$(\C,\D)$ has the \textbf{unique extension property} (\textsf{UEP}),
meaning that every pure state on $\D$ extends uniquely to a pure state
on $\C$. (This forces $\D$ to be a MASA in $\C$.) Then $(\C,\D)$ has
the unique pseudo-expectation property. In fact, the unique
pseudo-expectation is a conditional expectation.
\end{example}

\begin{proof} By \cite[Cor. 2.7]{ArchboldBunceGregson1982}, we have
the direct sum decomposition
\[ \C = \D + \o{\span}\{[\C,\D]\}.
\] If $\Phi \in \PsExp(\C,\D)$, then by Proposition \ref{basic
properties Phi} and the fact that $I(\D)$ is abelian,
\[ \Phi(xd - dx) = \Phi(xd) - \Phi(dx) = \Phi(x)d - d\Phi(x) = 0, ~ x
\in \C, ~ d \in \D.
\] The result follows.
\end{proof}

\begin{remark} Initially the study of \textsf{UEP} inclusions
  $(\C,\D)$ focused on the case $\D$ abelian, and there has been
  substantial work in this direction.  Later work has made progress in
  the general setting \cite{BunceChu1998}. It would be interesting to
  know whether a general \textsf{UEP} inclusion $(\C,\D)$ has the
  unique pseudo-expectation property.  A possible test case to
  consider is the inclusion $(C_r^*(\bbF_m),C_r^*(\bbF_n))$, for
  $m > n \geq 2$ \cite[Thm. 2.6]{AkemannWassermannWeaver2010}.
\end{remark}

\begin{example} Let $\M$ be a $II_1$ factor with separable predual and
$\D \subseteq \M$ be a MASA. More generally, let $\M$ be any $II_1$
factor and $\D \subseteq \M$ be a singly-generated MASA. Then
$(\M,\D)$ \underline{does not} have the unique pseudo-expectation property
\cite[Thm. 4.4]{AkemannSherman2012}.
\end{example}

\section{Some General Theory}

In this section we prove some general results about
pseudo-expectations, which we will use later to analyze more
complicated examples than those considered so far.

\subsection{Left Kernel}

Let $(\C,\D)$ be a $C^*$-inclusion. We say that a closed two-sided
ideal $\J \ideal \C$ is \emph{$\D$-disjoint} if $\D \cap \J = 0$. It
is not hard to prove that every $\D$-disjoint ideal of $\C$ is
contained in a maximal $\D$-disjoint ideal of $\C$.

As seen in Theorem \ref{Pitts}, if $(\C,\D)$ is a regular MASA
inclusion, then there exists a unique maximal $\D$-disjoint ideal in
$\C$, namely the left kernel $\L_\Phi$ of the unique
pseudo-expectation $\Phi:\C\rightarrow I(\D)$.  In  general, for a
$C^*$-inclusion $(\C,\D)$ with unique pseudo-expectation $\Phi$, the
left kernel $\L_\Phi$ is only a left ideal of $\C$, rather than a
two-sided ideal (see Example \ref{Calkin} below).   Nevertheless, we have the
following structural result for general \cstar-inclusions with the
unique pseudo-expectation property.

\begin{proposition} \label{left kernel} Let $(\C,\D)$ be a
$C^*$-inclusion. If $(\C,\D)$ has unique pseudo-expectation $\Phi$,
then there exists a unique maximal $\D$-disjoint ideal $\I \ideal
\C$. Furthermore, $\I \subseteq \L_\Phi$, the left kernel of $\Phi$.
\end{proposition}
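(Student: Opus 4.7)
The plan is to show first that every $\D$-disjoint two-sided ideal is contained in $\L_\Phi$, and then to construct the unique maximal $\D$-disjoint ideal as a sum of all such ideals.

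To begin, fix an arbitrary $\D$-disjoint closed two-sided ideal $\J \ideal \C$, and let $q:\C \to \C/\J$ be the quotient map. Since $\J \cap \D = 0$, the restriction $q|_\D$ is an injective $*$-homomorphism, so $q(\D) \subseteq \C/\J$ is a $C^*$-subalgebra isomorphic to $\D$. The inverse of $q|_\D$ composed with the inclusion $\D \hookrightarrow I(\D)$ defines a unital complete isometry $q(\D) \to I(\D)$, which by injectivity of $I(\D)$ extends to a ucp map $\psi: \C/\J \to I(\D)$. Then $\psi \circ q \in \PsExp(\C,\D)$, and by the uniqueness assumption, $\psi \circ q = \Phi$. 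In particular, $\Phi$ annihilates $\J$. Since $\J$ is closed and two-sided, it is self-adjoint, so for every $x \in \J$ we have $x^*x \in \J$ and hence $\Phi(x^*x) = 0$. Thus $\J \subseteq \L_\Phi$.

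For the existence and uniqueness of a maximal $\D$-disjoint ideal, let
\[
    \I = \o{\textstyle\sum\{\J : \J \ideal \C \text{ is a closed two-sided } \D\text{-disjoint ideal}\}}.
\]
As the closed sum of two-sided ideals, $\I$ is itself a closed two-sided ideal of $\C$, and by the preceding paragraph $\I \subseteq \L_\Phi$. To verify that $\I$ is $\D$-disjoint, suppose $d \in \D \cap \I \subseteq \D \cap \L_\Phi$. Then $\Phi(d^*d) = d^*d$, but also $\Phi(d^*d) = 0$ since $d \in \L_\Phi$ and $d^*d \in \I \subseteq \L_\Phi$; hence $d = 0$. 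Therefore $\I$ is a $\D$-disjoint ideal of $\C$ that contains every $\D$-disjoint ideal, so $\I$ is the unique maximal $\D$-disjoint ideal of $\C$, and $\I \subseteq \L_\Phi$ as required.

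I do not expect any serious obstacle: the only step requiring care is verifying that the quotient $\C/\J$ genuinely inherits a faithful copy of $\D$ on which we can apply injectivity of $I(\D)$, and the only mild subtlety is pulling $\J \subseteq \ker\Phi$ back to $\J \subseteq \L_\Phi$, which is immediate from self-adjointness of closed two-sided ideals. The uniqueness step then costs nothing beyond noting that the sum of $\D$-disjoint ideals remains $\D$-disjoint, precisely because it still sits inside $\L_\Phi$.
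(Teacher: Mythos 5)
Your proof is correct and follows essentially the same route as the paper: both use injectivity of $I(\D)$ to produce a pseudo-expectation annihilating a given $\D$-disjoint ideal $\J$ (the paper extends the compression $\D+\J\to\D$, $d+h\mapsto d$, while you factor the same map through the quotient $\C/\J$), invoke uniqueness to conclude $\J\subseteq\ker\Phi\subseteq$ (via self-adjointness) $\L_\Phi$, and then take the closed span of all such ideals to get the unique maximal one. The two constructions of the auxiliary pseudo-expectation are the same map in different clothing, so there is nothing further to compare.
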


\begin{proof} Let $\J \ideal \C$ be a $\D$-disjoint ideal. Then the
map $\D + \J \to \D: d + h \mapsto d$ is a unital $*$-homomorphism,
which extends by injectivity to a pseudo-expectation for $(\C,\D)$,
necessarily $\Phi$. Thus $\J \subseteq \ker(\Phi)$. If $h \in \J$,
then $h^*h \in \J$, which implies $\Phi(h^*h) = 0$, which in turn
implies $h \in \L_\Phi$. Thus $\J \subseteq \L_\Phi$. It follows that
\[ \cup\{\J: \J \ideal \C, ~ \D \cap \J = 0\} \subseteq \L_\Phi,
\] and so
\[ \I = \o{\span}(\cup\{\J: \J \ideal \C, ~ \D \cap \J = 0\})
\subseteq \L_\Phi.
\] Thus $\I$ is the unique maximal $\D$-disjoint ideal of $\C$.
\end{proof}

\subsection{Characterization: Every Pseudo-Expectation is Faithful}

In this section we characterize the property ``every
pseudo-expectation is faithful'' for arbitrary $C^*$-inclusions
$(\C,\D)$ in terms of the (hereditary) $\D$-disjoint ideal structure
of $\C$ (Theorem \ref{faithful characterization}). Formally, the
property ``every pseudo-expectation is faithful'' is weaker than the
faithful unique pseudo-expectation property. On the other hand, we
have no examples showing that it is strictly weaker. So in principle,
Theorem \ref{faithful characterization} could be a characterization of
the faithful unique pseudo-expectation property. We list this as an
open problem.

\begin{question} \label{faithful unique question} Does the property
``every pseudo-expectation is faithful'' imply the faithful unique
pseudo-expectation property?
\end{question}

To proceed with our characterization, we will need two notions from
earlier in the paper. First, recall that a closed two-sided ideal $\J
\ideal \C$ is \emph{$\D$-disjoint} if $\D \cap \J = 0$. Second, recall
that a $C^*$-inclusion $(\C,\D)$ is \emph{essential} (\textsf{Ess}) if
every nontrivial closed two-sided ideal of $\C$ intersects $\D$
nontrivially. The following proposition relates these two notions with
each other, as well as to a useful mapping property.

\begin{proposition}
Let $(\C,\D)$ be a $C^*$-inclusion. Then the following are equivalent:
\begin{enumerate}
\item $(\C,\D)$ is essential.
\item The only $\D$-disjoint ideal of $\C$ is $0$.
\item Whenever $\pi:\C \to B(\H)$ is a unital $*$-homomorphism such that $\pi|_{\D}$ is faithful, then $\pi$ itself is faithful.
\end{enumerate}
\end{proposition}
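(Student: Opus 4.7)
The proposition asserts the equivalence of three conditions, and all three implications are short and essentially formal once the definitions are unwound; the plan is to establish them as a cycle (i)$\Leftrightarrow$(ii) and (ii)$\Leftrightarrow$(iii), with the only mild subtlety being how to convert a $\D$-disjoint ideal into a representation on some $B(\H)$.

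First I would observe that (i) and (ii) are just contrapositives of each other at the level of definitions: the condition ``$(\C,\D)$ is essential'' says that every nonzero closed two-sided ideal meets $\D$ nontrivially, which is exactly the same as saying that every closed two-sided ideal $\J \ideal \C$ with $\D \cap \J = 0$ is zero. No work is needed here beyond rewriting the definition.

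For (ii)$\Rightarrow$(iii), take a unital $*$-homomorphism $\pi:\C \to B(\H)$ with $\pi|_\D$ faithful. The kernel $\ker(\pi)$ is a closed two-sided ideal in $\C$, and faithfulness of $\pi|_\D$ means exactly $\D \cap \ker(\pi) = 0$, i.e., $\ker(\pi)$ is $\D$-disjoint. By (ii), $\ker(\pi) = 0$, so $\pi$ is faithful.

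For (iii)$\Rightarrow$(ii), suppose $\J \ideal \C$ is $\D$-disjoint. Consider the quotient map $q:\C \to \C/\J$; since $\D \cap \J = 0$, $q|_\D$ is injective. Choose a faithful unital representation $\rho:\C/\J \to B(\H)$ (which exists by Gelfand--Naimark, and may be chosen unital since both algebras are unital with the same unit), and set $\pi = \rho \circ q$. Then $\pi$ is a unital $*$-homomorphism with $\pi|_\D = \rho \circ q|_\D$ faithful, so by (iii) $\pi$ itself is faithful, which forces $\J = \ker(\pi) = 0$. The main (very minor) thing to be careful about is just producing the faithful representation of the quotient and verifying that it can be taken unital, but this is standard, so I do not anticipate any genuine obstacle.
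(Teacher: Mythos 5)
Your proof is correct and follows essentially the same route as the paper: (i)$\Leftrightarrow$(ii) by unwinding definitions, (ii)$\Rightarrow$(iii) via $\ker(\pi)$ being $\D$-disjoint, and (iii)$\Rightarrow$(ii) via the quotient map. Your extra care in composing the quotient map with a faithful representation of $\C/\J$ to land in some $B(\H)$ is a small technicality the paper glosses over, but it is the same argument.
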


\begin{proof}
(i $\iff$ ii) Tautological.

(ii $\implies$ iii) Suppose the only $\D$-disjoint ideal of $\C$ is the
trivial ideal. Let $\pi:\C \to
B(\H)$ be a unital $*$-homomorphism such that $\pi|_{\D}$ is
faithful. Then $\ker(\pi) \ideal \C$ is a $\D$-disjoint ideal. By
assumption, $\ker(\pi) = 0$, so $\pi$ is faithful.

(iii $\implies$ ii) Conversely, suppose that for every unital
$*$-homomorphism $\pi:\C \to B(\H)$, $\pi$ is faithful whenever
$\pi|_{\D}$ is faithful. Let $\J \ideal \C$ be a $\D$-disjoint
ideal. Then $q:\C \to \C/\J: x \mapsto x + \J$ is a unital
$*$-homomorphism such that $q|_{\D}$ is faithful. By assumption, $q$
is faithful, so $\J = 0$.
\end{proof}

As we saw in the introduction, the condition (\textsf{Ess}) is not
hereditary from above. Indeed, $(M_{2 \times 2}(\bbC),\bbC I)$
satisfies (\textsf{Ess}), since $M_{2 \times 2}(\bbC)$ is simple, and
$\bbC I \subseteq \bbC \oplus \bbC \subseteq M_{2 \times 2}(\bbC)$ is
a $C^*$-algebra, but $(\bbC \oplus \bbC,\bbC I)$ fails
(\textsf{Ess}). To resolve this issue, we introduce the following
stronger condition:

\begin{definition} We say that a $C^*$-inclusion $(\C,\D)$ is
\textbf{hereditarily essential} if $(\C_0,\D)$ is essential whenever
$\D \subseteq \C_0 \subseteq \C$ is a $C^*$-algebra.
\end{definition}

Now comes the promised characterization.

\begin{theorem} \label{faithful characterization}
Let $(\C,\D)$ be a $C^*$-inclusion. Then the following are equivalent:
\begin{enumerate}
\item Every pseudo-expectation $\Phi \in \PsExp(\C,\D)$ is faithful.
\item $(\C,\D)$ is hereditarily essential.
\end{enumerate}
\end{theorem}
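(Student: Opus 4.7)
The plan is to prove the two implications separately, exploiting injectivity of $I(\D)$ in both directions and the multiplicative domain of $\Phi$ in the harder one.

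For (i) $\implies$ (ii) I will argue contrapositively. Suppose some intermediate $\D \subseteq \C_0 \subseteq \C$ admits a nonzero $\D$-disjoint ideal $\J \ideal \C_0$. Since $\J \cap \D = 0$, the quotient map $q:\C_0 \to \C_0/\J$ restricts to an injection on $\D$, so by injectivity of $I(\D)$ the inclusion $\D \hookrightarrow I(\D)$ extends to a ucp map $\tilde\Phi_0: \C_0/\J \to I(\D)$. Then $\Phi_0 := \tilde\Phi_0 \circ q$ is a pseudo-expectation for $(\C_0,\D)$ that vanishes on $\J$, and a further injectivity extension produces $\Phi \in \PsExp(\C, \D)$ with $\Phi|_{\C_0} = \Phi_0$. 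For any $y \in \J$, the element $y^*y$ lies in $\J$ (as $\J$ is a two-sided ideal of $\C_0$), so $\Phi(y^*y) = 0$, giving $\J \subseteq \L_\Phi \neq 0$ and contradicting the hypothesis that every pseudo-expectation is faithful.

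For (ii) $\implies$ (i), given $\Phi \in \PsExp(\C, \D)$ I consider the multiplicative domain $M_\Phi \subseteq \C$. Since $\Phi|_\D = \id$ is multiplicative we have $\D \subseteq M_\Phi$, so $(M_\Phi,\D)$ is an intermediate $C^*$-inclusion; because $\Phi|_{M_\Phi}$ is a $*$-homomorphism, its kernel $K$ is a two-sided ideal of $M_\Phi$ satisfying $K \cap \D = 0$. Hereditary essentiality then forces $K = 0$. A short Schwarz-inequality calculation identifies $K$ with $\L_\Phi \cap \L_\Phi^*$: if $x \in M_\Phi$ with $\Phi(x) = 0$ then the multiplicative-domain equalities give $\Phi(x^*x) = \Phi(xx^*) = 0$; conversely, if $\Phi(x^*x) = 0 = \Phi(xx^*)$, Schwarz gives $\Phi(x)^*\Phi(x) \leq \Phi(x^*x) = 0$ and $\Phi(x)\Phi(x)^* \leq \Phi(xx^*) = 0$, so $\Phi(x) = 0$ and $x \in M_\Phi$. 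Hence $\L_\Phi \cap \L_\Phi^* = 0$, and the final step upgrades this to $\L_\Phi = 0$: for any $x \in \L_\Phi$, Proposition \ref{basic properties Phi}(ii) gives $x^*x \in \L_\Phi$, which being self-adjoint lies in $\L_\Phi \cap \L_\Phi^* = 0$, forcing $x = 0$. The main conceptual hurdle, I expect, is recognizing $M_\Phi$ as the correct intermediate algebra on which to invoke hereditary essentiality, since only there does the kernel of $\Phi$ acquire two-sided-ideal structure; combined with the simple but crucial passage from triviality of the self-adjoint part of $\L_\Phi$ to triviality of $\L_\Phi$ via its left-ideal property, this reduces the theorem to two applications of injectivity.
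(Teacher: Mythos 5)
Your proposal is correct. The forward implication is essentially the paper's argument: where you pass through the quotient $\C_0/\J$ and pull the extension back along $q$, the paper extends the unital $*$-homomorphism $\D + \J \to \D$, $d+h \mapsto d$, directly; these are the same construction, and both correctly conclude $\J \subseteq \L_\Phi$ from $y^*y \in \J$. The reverse implication, however, is a genuinely different route. The paper fixes a single $x \in \L_\Phi$, forms the small intermediate algebra $\C_0 = C^*(\D,|x|)$, and shows that the ideal of $\C_0$ generated by $|x|$ sits inside $\L_\Phi$ (hence is $\D$-disjoint) by exploiting the fact that $\L_\Phi$ is a closed left ideal and a right $\D$-module; essentiality of $(\C_0,\D)$ then kills $|x|$. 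You instead apply hereditary essentiality once, globally, to the multiplicative domain $M_\Phi \supseteq \D$, identify $\ker(\Phi|_{M_\Phi})$ with $\L_\Phi \cap \L_\Phi^*$ via the Schwarz inequality, and then upgrade $\L_\Phi \cap \L_\Phi^* = 0$ to $\L_\Phi = 0$ using that $x^*x \in \L_\Phi$ is self-adjoint -- all of which checks out. Your version buys a cleaner ideal (the kernel of an honest $*$-homomorphism, with no need to describe the ideal generated by $|x|$ inside $C^*(\D,|x|)$ as $\o{\span}\{w|x|d\}$), at the cost of invoking Choi's multiplicative-domain machinery; the paper's version is more elementary, needing only the module properties of $\L_\Phi$ already recorded in Proposition \ref{basic properties Phi}, and tests essentiality only on singly-generated extensions of $\D$, which is a mildly sharper statement.
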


\begin{proof} (i $\implies$ ii) Suppose that every pseudo-expectation
$\Phi \in \PsExp(\C,\D)$ is faithful. Let $\D \subseteq \C_0 \subseteq
\C$ be a $C^*$-algebra and $\J_0 \ideal \C_0$ be a $\D$-disjoint
ideal. Then $\Phi_0:\D + \J_0 \to \D: d + h \mapsto d$ is a unital
$*$-homomorphism. By injectivity, there exists $\Phi \in
\PsExp(\C,\D)$ such that such that $\Phi|_{\D + \J_0} = \Phi_0$. Since
$\Phi$ is faithful, so is $\Phi_0$, which implies $\J_0 = 0$. It
follows that $(\C_0,\D)$ is essential, which implies $(\C,\D)$ is
hereditarily essential.

(ii $\implies$ i) Conversely, suppose that $(\C,\D)$ is hereditarily
essential. Let $\Phi \in \PsExp(\C,\D)$ and $x \in \L_\Phi$ (the left
kernel of $\Phi$). Define $\C_0 = C^*(\D,|x|)$, so that $\D \subseteq
\C_0 \subseteq \C$, and let $\J_0 \ideal \C_0$ be the closed two-sided
ideal generated by $|x|$. We claim that $\J_0 \subseteq
\L_\Phi$. Indeed, $\J_0 = \overline{\span}\{w|x|d: w \in \C_0, ~ d \in
\D\}$ and $\L_\Phi$ is both a closed left ideal and a right
$\D$-module in $\C$ containing $|x|$ (Proposition \ref{basic
properties Phi}). Since $\D \cap \L_\Phi = 0$, $\D \cap \J_0 = 0$, and
since $(\C_0,\D)$ is essential by assumption, $\J_0 = 0$. Thus $|x| =
0$, which implies $x = 0$. Hence $\L_\Phi = 0$, so $\Phi$ is faithful.
\end{proof}

\subsection{Quotients} \label{quotient section}

We next examine the behavior of the unique
pseudo-expectation property with respect to quotients. Specifically,
we are interested to know when the unique pseudo-expectation property
for $(\C,\D)$ passes to $(\C/\J,\D/(\J \cap \D))$, for a closed
two-sided ideal $\J \ideal \C$. If $\J \cap \D = 0$, then the answer
is ``always'', and faithfulness is preserved.

\begin{proposition} Let $(\C,\D)$ be a $C^*$-inclusion and $\J \ideal
\C$ be a $\D$-disjoint ideal. If $(\C,\D)$ has the unique
pseudo-expectation property, then so does $(\C/\J,\D)$. If $(\C,\D)$
has the faithful unique pseudo-expectation property, then so does
$(\C/\J,\D)$ (trivially, because $\J = 0$).
\end{proposition}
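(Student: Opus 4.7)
The plan is to exploit the fact that, since $\J\cap\D=0$, the quotient map $q\colon\C\to\C/\J$ restricts to a unital $*$-monomorphism on $\D$, so we may identify $\D$ with $q(\D)\subseteq\C/\J$; in particular $I(\D)$ is the same injective envelope whichever copy of $\D$ we use. This makes the inclusion $(\C/\J,\D)$ well defined and lets ucp maps from $\C/\J$ into $I(\D)$ talk to ucp maps from $\C$ into $I(\D)$ via composition with $q$.

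For the first assertion, I would split the argument into uniqueness and existence. For uniqueness, given any $\Psi\in\PsExp(\C/\J,\D)$, the composition $\Psi\circ q\colon\C\to I(\D)$ is ucp and restricts to $\id_\D$, so it lies in $\PsExp(\C,\D)$; by hypothesis $\Psi\circ q=\Phi$, and since $q$ is surjective this determines $\Psi$ uniquely. For existence, the key point is that $\Phi$ descends through $q$: because $\J$ is a $\D$-disjoint two-sided ideal of $\C$, Proposition \ref{left kernel} gives $\J\subseteq\L_\Phi$, and the Schwarz inequality (valid since $\Phi$ is ucp, hence $2$-positive) yields $\Phi(h)^*\Phi(h)\le \Phi(h^*h)=0$ for every $h\in\J$. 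Thus $\Phi$ vanishes on $\J$ and factors uniquely as $\Phi=\tilde\Phi\circ q$ for a ucp map $\tilde\Phi\colon\C/\J\to I(\D)$ with $\tilde\Phi|_\D=\id$. Combining the two parts, $\PsExp(\C/\J,\D)=\{\tilde\Phi\}$.

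The second assertion really is immediate: if $\Phi$ is faithful then $\L_\Phi=0$, so by Proposition \ref{left kernel} the only $\D$-disjoint (two-sided) ideal of $\C$ is the zero ideal, forcing $\J=0$ and reducing the claim to a tautology.

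I do not expect a genuine obstacle here; the only substantive point is the containment $\J\subseteq\L_\Phi$ coming from Proposition \ref{left kernel}, which is exactly what lets $\Phi$ descend to the quotient. Everything else is a one-line factorization or a pullback along the surjection $q$.
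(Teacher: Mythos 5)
Your argument is correct and its core — pulling back any $\Psi\in\PsExp(\C/\J,\D)$ to $\Psi\circ q\in\PsExp(\C,\D)$ and using surjectivity of $q$ — is exactly the paper's proof; your justification of the faithful case via $\J\subseteq\L_\Phi=0$ is equivalent to the paper's appeal to Theorem~\ref{faithful characterization}. The only difference is that your ``existence'' half (showing $\Phi$ kills $\J$ and descends) is superfluous, since $\PsExp(\C/\J,\D)\neq\emptyset$ automatically by injectivity of $I(\D)$, though it does no harm and recovers the same formula $\tilde\Phi(x+\J)=\Phi(x)$.
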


\begin{proof} Suppose $\PsExp(\C,\D) = \{\Phi\}$. Let $\theta \in
\PsExp(\C/\J,\D)$. Then $\theta \circ q \in \PsExp(\C,\D)$, where
$q:\C \to \C/\J$ is the quotient map. Thus $\theta \circ q = \Phi$,
which implies $\theta(x + \J) = \Phi(x)$, $x \in \C$. Hence
$\PsExp(\C/\J,\D) = \{\theta\}$. If $\Phi$ is faithful, then $\J = 0$,
by Theorem \ref{faithful characterization}.
\end{proof}

\begin{remark} If $\D \cap \J \neq 0$, then it is entirely possible
that $(\C,\D)$ has a unique pseudo-expectation but $(\C/\J,\D/(\J \cap
\D))$ does not (see Example \ref{non-unique quotient}).
\end{remark}

In order to obtain a positive result when $\D \cap \J \neq 0$, we
require $\J \cap \D \ideal \D$ to be regular. Recall that if $\A$ is a
unital $C^*$-algebra and $\I \ideal \A$, then \[\I^\perp = \{a \in \A:
a\I = \I a = 0\} \ideal \A.\] Also, $\I$ is \emph{regular} if
$\I^{\perp\perp} = (\I^\perp)^\perp = \I$.
Combining~\cite[Lemma~1.3(iii)]{Hamana1982}
with~\cite[Theorem~6.3]{HamanaReEmCStAlMoCoCStAl}, one finds that
given a regular ideal $\I \ideal \A$, there exists a unique projection
$p \in Z(I(\A))$ such that $\I = \{a \in \A: ap = a\}$.
In that case, the unital $*$-isomorphism $\A/\I \to
\A p^\perp: a + \I \mapsto ap^\perp$ extends uniquely to a unital
$*$-isomorphism $I(\A/\I) \cong I(\A)p^\perp$.

\begin{theorem} \label{quotient} Let $(\C,\D)$ be a $C^*$-inclusion
and $\J \ideal \C$. If $(\C,\D)$ has the unique pseudo-expectation
property and $\J \cap \D \ideal \D$ is regular, then $(\C/\J,\D/(\J
\cap \D))$ has the unique pseudo-expectation property.
\end{theorem}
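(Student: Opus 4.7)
The plan is to use the unique pseudo-expectation $\Phi$ of $(\C,\D)$ to pin down any pseudo-expectation of the quotient inclusion. Write $\I=\J\cap\D$ and let $p\in Z(I(\D))$ be the unique central projection with $\I=\{d\in\D:dp=d\}$ produced by the regularity hypothesis (and the Hamana machinery quoted immediately above the theorem). Under the resulting identification $I(\D/\I)\cong I(\D)p^\perp$ extending $d+\I\mapsto dp^\perp$, any $\psi\in\PsExp(\C/\J,\D/\I)$ is a ucp map $\psi:\C/\J\to I(\D)p^\perp\subseteq I(\D)$ with $\psi(d+\I)=dp^\perp$ for each $d\in\D$. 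Existence of such a $\psi$ is automatic by injectivity of $I(\D/\I)$, so only uniqueness requires proof.

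From an arbitrary such $\psi$ I would manufacture a pseudo-expectation for $(\C,\D)$. With $q:\C\to\C/\J$ the quotient map, set
\[
\tilde\Phi:\C\to I(\D),\qquad \tilde\Phi(x):=\Phi(x)\,p+\psi(q(x)).
\]
Three quick checks verify $\tilde\Phi\in\PsExp(\C,\D)$: (a)~complete positivity, since $p\in Z(I(\D))$ makes $y\mapsto yp$ a $*$-homomorphism on $I(\D)$, so $x\mapsto\Phi(x)p$ is cp, while $\psi\circ q$ is cp as the composition of a $*$-homomorphism with a ucp map; (b)~unitality, $\tilde\Phi(1)=1\cdot p+1\cdot p^\perp=1$; (c)~for $d\in\D$, $\tilde\Phi(d)=dp+\psi(d+\I)=dp+dp^\perp=d$, so $\tilde\Phi|_\D=\id$.

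Uniqueness of the pseudo-expectation for $(\C,\D)$ then forces $\tilde\Phi=\Phi$. Multiplying by $p^\perp$, and using $pp^\perp=0$ together with $\psi(q(x))\in I(\D)p^\perp$, one obtains $\psi(q(x))=\Phi(x)\,p^\perp$ for every $x\in\C$. Since $q$ is surjective, $\psi$ is completely determined by $\Phi$, which proves uniqueness. The step I expect to be the main obstacle is not analytic but a matter of bookkeeping: one must consistently maintain the identification $I(\D/\I)\cong I(\D)p^\perp$, verify that the two summands of $\tilde\Phi$ genuinely split across the orthogonal central corners $I(\D)p$ and $I(\D)p^\perp$, and confirm that they reassemble to $\id$ on all of $\D$, not merely on $\I$. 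The regularity of $\J\cap\D$ is essential here, since it is precisely the hypothesis that supplies the central projection $p$ enabling this splitting; without it there is no canonical way to glue $\psi$ back to $\Phi$.
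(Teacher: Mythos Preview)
Your proof is correct and follows essentially the same route as the paper's: both use the central projection $p$ to split $I(\D)$, lift a quotient pseudo-expectation $\psi$ to $\tilde\Phi(x)=\Phi(x)p+\psi(q(x))$, invoke uniqueness to force $\tilde\Phi=\Phi$, and then read off $\psi(q(x))=\Phi(x)p^\perp$. The only cosmetic difference is that you spell out the verification that $\tilde\Phi\in\PsExp(\C,\D)$ in more detail than the paper does; one small notational slip is writing $\psi(d+\I)$ where strictly speaking the argument lives in $\C/\J$, but under the canonical embedding $\D/\I\hookrightarrow\C/\J$ this is harmless.
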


\begin{proof} Let $p \in Z(I(\D))$ be the unique projection such that
$\J \cap \D = \{d \in \D: dp = d\}$. Then the unital $*$-isomorphism
$\D/(\J \cap \D) \to \D p^\perp: d + (\J \cap \D) \mapsto dp^\perp$
extends uniquely to a unital $*$-isomorphism $I(\D/(\J \cap \D)) \cong
I(\D)p^\perp$. Now suppose $\PsExp(\C,\D) = \{\Phi\}$ and let $\theta
\in \PsExp(\C/\J,\D/(\J \cap \D))$. Then $\theta:\C/\J \to
I(\D)p^\perp$ is a ucp map such that $\theta(d + \J) = dp^\perp$, $d
\in \D$. Define $\Theta:\C \to I(\D)$ by
\[ \Theta(x) = \theta(x + \J) + \Phi(x)p, ~ x \in \C.
\] Then $\Theta \in \PsExp(\C,\D)$, which implies $\Theta = \Phi$,
which in turn implies
\[ \theta(x + \J) = \Phi(x)p^\perp, ~ x \in \C.
\] Thus $(\C/\J,\D/(\J \cap \D))$ has the unique pseudo-expectation
property.
\end{proof}

\begin{remark} If $(\C,\D)$ has the faithful unique pseudo-expectation
property and $\J \ideal \C$, then $(\C/\J,\D/(\J \cap \D))$ need not
have the faithful unique pseudo-expectation property, even if $\J \cap
\D \ideal \D$ is regular (see Example \ref{unfaithful quotient}).
\end{remark}

A very interesting example not covered by the results of this section
occurs when $\C = B(\ell^2)$, $\D = \ell^\infty$, and $\J =
K(\ell^2)$, so that
\[ (\C/\J,\D/(\J \cap \D)) = (B(\ell^2)/K(\ell^2),\ell^\infty/c_0).
\] Indeed, $c_0 \ideal \ell^\infty$ is not regular, since
$c_0^{\perp\perp} = \ell^\infty$. Our analysis of this example is
greatly simplified by the recent remarkable affirmative solution to
the Kadison-Singer Problem \cite{MarcusSpielmanSrivastava2015}.

\begin{example}[Calkin algebra] \label{Calkin} The inclusion
$(B(\ell^2)/K(\ell^2),\ell^\infty/c_0)$ has the unique
pseudo-expectation property. In fact, the unique pseudo-expectation is
a conditional expectation which is \underline{not} faithful.
\end{example}

\begin{proof} By \cite{MarcusSpielmanSrivastava2015}, the inclusion
$(B(\ell^2),\ell^\infty)$ has the unique extension property
(\textsf{UEP}). By \cite[Lemma 3.1]{ArchboldBunceGregson1982},
$(B(\ell^2)/K(\ell^2),\ell^\infty/c_0)$ has (\textsf{UEP}) as
well. Thus $(B(\ell^2)/K(\ell^2),\ell^\infty/c_0)$ has a unique
pseudo-expectation $\tilde{E}$, which is actually a conditional
expectation, by Example \ref{UEP MASA}. In fact,
\[ \tilde{E}(x + K(\ell^2)) = E(x) + c_0, ~ x \in B(\ell^2),
\] where $E:B(\ell^2) \to \ell^\infty$ is the unique conditional
expectation. Letting $h \in B(\ell^2)_+$ be the \emph{Hilbert matrix}
\cite{Choi1983}, we see that $\tilde{E}(h + K(\ell^2)) = 0$, but $h +
K(\ell^2) \neq 0$.
\end{proof}

\begin{remark} \label{not ideal} Example \ref{Calkin} furnishes an
instance of a $C^*$-inclusion $(\C,\D)$ with a unique
pseudo-expectation $\Phi$, such that $\L_\Phi$ is not a two-sided
ideal of $\C$. Indeed, $\C$ is simple but $\L_\Phi \neq 0$ in Example
\ref{Calkin}. This should be compared with Theorem \ref{Pitts}.
\end{remark}

\subsection{Abelian Relative Commutant}

As mentioned in the introduction, the unique pseudo-expectation
property for a $C^*$-inclusion $(\C,\D)$ can be thought of as an
expression of the fact that $\D$ is ``large'' in $\C$. A more familiar
algebraic expression of the largeness of $\D$ in $\C$ is that $\D^c =
\D' \cap \C$, the \emph{relative commutant} of $\D$ in $\C$, is
``small'' (abelian). In Corollary \ref{!fPse implies ARC} below, we
show that the \underline{faithful} unique pseudo-expectation property
implies that the relative commutant is abelian, symbolically
\[ \text{(\textsf{f!PsE})} \implies \text{(\textsf{ARC})}.
\] We expect that the hypothesis of faithfulness is not needed for
this result, but we have not been able to eliminate it.

\begin{theorem} \label{not ARC} Let $(\C,\D)$ be a
$C^*$-inclusion. Assume that there exists a faithful
pseudo-expectation $\Phi \in \PsExp(\C,\D)$. If $\D^c$ is not abelian,
then there exist infinitely many pseudo-expectations for $(\C,\D)$,
some of which are not faithful.
\end{theorem}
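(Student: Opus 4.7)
My plan has two phases: first construct a non-faithful pseudo-expectation $\Psi \in \PsExp(\C,\D)$ distinct from $\Phi$, and then use convex interpolation to obtain infinitely many pseudo-expectations, some non-faithful.

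The interpolation step is immediate: given $\Psi \in \PsExp(\C,\D)$ with $\Psi \neq \Phi$, the family $\Phi_t := t\Phi + (1-t)\Psi$ for $t \in [0,1]$ lies in $\PsExp(\C,\D)$ by convexity (Proposition~\ref{basic properties PsExp}), and the $\Phi_t$ are pairwise distinct since $\Phi_s - \Phi_t = (s-t)(\Phi-\Psi) \neq 0$. If $\Phi$ is faithful and $\Psi$ is not, then for $t \in (0,1]$ one has $\Phi_t(x^*x) \geq t\Phi(x^*x) > 0$ whenever $x \neq 0$, so $\Phi_t$ is faithful; but $\Phi_0 = \Psi$ is not. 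Thus the family simultaneously yields ``infinitely many'' and ``some not faithful.''

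For the construction of $\Psi$, I use that $\Phi$ is a $\D$-bimodule map (Proposition~\ref{basic properties Phi}(i)), so for $a \in \D^c$ and $d \in \D$ we get $\Phi(a)d = \Phi(ad) = \Phi(da) = d\Phi(a)$, i.e.\ $\Phi(\D^c) \subseteq W := \D' \cap I(\D)$. Since $\D^c$ is non-abelian, it admits an irreducible $*$-representation $\pi : \D^c \to B(H)$ with $\dim H \geq 2$. By Schur's lemma, $\pi$ carries the central subalgebra $\D \cap \D^c$ into scalars, giving a character $\chi$ on $\D \cap \D^c$; for any unit vector $\xi \in H$ the vector state $\omega_\xi = \langle \pi(\cdot)\xi,\xi\rangle$ on $\D^c$ extends $\chi$. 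By Kadison's transitivity theorem, I can choose $\xi$ lying in the kernel of some nonzero operator $\pi(a)$, $a \in \D^c$, so that $\omega_\xi(a^*a) = 0$ with $a \neq 0$. I then plan to assemble such vector states over an appropriate parameter space (the Gelfand dual of $I(\D)$ when $\D$ is abelian, or of the center $Z(I(\D))$ in general) into a ucp map $\sigma : C^*(\D,\D^c) \to I(\D)$ that extends $\Phi$ on $\D$ but has nontrivial left kernel. Extending $\sigma$ via injectivity of $I(\D)$ (Theorem~\ref{injEnv}) produces the required non-faithful pseudo-expectation $\Psi : \C \to I(\D)$.

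The main obstacle is the assembly step: producing a genuinely well-defined ucp map $\sigma$ out of pointwise vector-state data, with the correct compatibility on $\D$ and with nontrivial left kernel. When $\D$ is abelian, $W = I(\D)$ is abelian and the assembly reduces to a selection-type argument over the Gelfand dual of $I(\D)$. In the non-abelian case $\D \not\subseteq \D^c$ and $W$ may not be abelian, so the construction is subtler; it likely proceeds by a fiberwise reduction via the central support of $\D$ inside $I(\D)$ and more delicate operator-algebraic input. Verifying that the resulting $\Psi$ is both ucp and non-faithful on all of $\C$ (not just on $\D^c$) is the principal technical hurdle.
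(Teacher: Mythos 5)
Your interpolation step (Phase 2) is fine and essentially free, but the theorem lives or dies with Phase 1, and Phase 1 is not a proof: the ``assembly'' of fiberwise vector states on $\D^c$ into a single ucp map $\sigma:C^*(\D,\D^c)\to I(\D)$ extending the identity on $\D$ is precisely the hard content, and you explicitly leave it unresolved. Beyond being incomplete, the plan has two substantive problems. First, non-faithfulness is a global statement: you need one nonzero $a$ with $\Psi(a^*a)=0$, but choosing, fiber by fiber, a unit vector $\xi$ in the kernel of some $\pi(a)$ lets $a$ vary with the fiber, so the assembled map could well be faithful even if every fiber component has nontrivial kernel. Second, compatibility with $\D$ is not just a matter of matching a character of $Z(\D)=\D\cap\D^c$ in each fiber; $\sigma$ must be a $\D$-bimodule map on all of $C^*(\D,\D^c)$, i.e.\ it must handle products $d a$ with $d\in\D$, $a\in\D^c$, and vector states of irreducible representations of $\D^c$ carry no such module structure. (In the $W^*$-setting this correspondence is Theorem~\ref{CE bijection} of the paper, and it already requires balanced tensor product machinery; in the $C^*$-setting nothing of the sort is available off the shelf.)

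The paper's proof sidesteps all of this with a much more elementary device. Since $\D^c$ is non-abelian it contains $x$ with $\|x\|=1$ and $x^2=0$. Taking the polar decomposition $x=u|x|$ inside $B(\H)\supseteq\C$ gives a partial isometry $u\in\D'$ with $u^2=0$, hence orthogonal projections $p_1=u^*u$, $p_2=uu^*$, $p_3=1-p_1-p_2$ in $\D'$. The explicit ucp maps
\[
\theta_\lambda(t)=\lambda p_1tp_1+(1-\lambda)u^*tu+\lambda utu^*+(1-\lambda)p_2tp_2+p_3tp_3
\]
fix $\D$ pointwise and send $x^*x\mapsto\lambda(x^*x+xx^*)$, $xx^*\mapsto(1-\lambda)(x^*x+xx^*)$; composing with $\Phi$ on the small operator system $\D+\bbC x^*x+\bbC xx^*$ (which $\theta_\lambda$ preserves) and extending by injectivity yields pseudo-expectations $\Phi_\lambda$ that are pairwise distinct because $\Phi(x^*x+xx^*)\neq 0$ by faithfulness, with $\Phi_0,\Phi_1$ not faithful. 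Note that this construction needs no representation theory of $\D^c$, no Gelfand fibering of $I(\D)$, and produces the single distinguished nonzero element $x$ killed by $\Phi_0$ directly. If you want to salvage your outline, you should replace the vector-state assembly with a concrete $\D$-fixing ucp perturbation of this kind; as written, the argument has a genuine gap at its center.
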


\begin{proof} We may assume that $\C \subseteq B(\H)$ for some Hilbert
space $\H$. If $\D^c$ is not abelian, then
there exists $x \in \D^c$
with $\|x\| = 1$ and $x^2 = 0$ (\cite[p.~288]{DoranBelfiChC*Al}). Let $x = u|x|$ be the polar
decomposition, so that $u \in \D'$ is a partial isometry with initial
space $\o{\ran}(|x|)$ and final space $\o{\ran}(x)$. Since
\[ \o{\ran}(x) \subseteq \ker(x) = \ker(|x|) = \ran(|x|)^\perp,
\] we find that $u^2 = 0$. It follows that
\[ p_1 = u^*u, ~ p_2 = uu^*, \text{ and } p_3 = 1 - u^*u - uu^*
\] are orthogonal projections in $\D'$. For $\lambda \in [0,1]$ define
$\theta_\lambda:B(\H) \to B(\H)$ by
\[ \theta_\lambda(t) = \lambda p_1tp_1 + (1-\lambda)u^*tu + \lambda
utu^* + (1-\lambda) p_2tp_2 + p_3tp_3.
\] Then $\theta_\lambda$ is a ucp map such that
\[ \theta_\lambda|_{\D} = \id, ~ \theta_\lambda(x^*x) = \lambda(x^*x +
xx^*), \text{ and } \theta_\lambda(xx^*) = (1-\lambda)(x^*x + xx^*).
\] Consider the operator system
\[ \mathcal{S} := \D + \bbC x^*x + \bbC xx^*\subseteq \C.
\]  Since $\theta_\lambda(\mathcal{S})
\subseteq \mathcal{S}$,
\[ \Phi_\lambda^0 := \Phi \circ
\theta_\lambda|_{\mathcal{S}}:\mathcal{S} \to I(\D)
\] is a well-defined ucp map such that
\[ \Phi_\lambda^0|_{\D} = \id, ~ \Phi_\lambda^0(x^*x) =
\lambda\Phi(x^*x + xx^*), \text{ and } \Phi_\lambda^0(xx^*) =
(1-\lambda)\Phi(x^*x + xx^*).
\] By injectivity, there exists $\Phi_\lambda \in \PsExp(\C,\D)$ such
that $\Phi_\lambda|_{\mathcal{S}} = \Phi_\lambda^0$. Since $\Phi$ is
faithful, $\Phi(x^*x + xx^*) \neq 0$, and so $\Phi_\lambda \neq
\Phi_\mu$ if $\lambda \neq \mu$. Consequently $\{\Phi_\lambda: \lambda
\in [0,1]\}$ is an infinite family of pseudo-expectations for
$(\C,\D)$, some of which are not faithful (namely $\Phi_0$ and
$\Phi_1$).
\end{proof}

\begin{remark} \label{halving} In Theorem \ref{not ARC}, we may remove
the hypothesis that there exists a faithful pseudo-expectation $\Phi
\in \PsExp(\C,\D)$, provided we strengthen the hypothesis on
$\D^c$. For example, we could ask that $\D^c$ contain a halving
projection. In that case the proof simplifies substantially.
\end{remark}

\begin{corollary} \label{!fPse implies ARC} Let $(\C,\D)$ be a
$C^*$-inclusion. If $(\C,\D)$ has the faithful unique
pseudo-expectation property, then $\D^c$ is abelian.
\end{corollary}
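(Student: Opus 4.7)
The plan is to deduce this immediately from Theorem \ref{not ARC} by contraposition. Assume $(\C,\D)$ has the faithful unique pseudo-expectation property, and let $\Phi$ denote the unique pseudo-expectation, which is faithful by hypothesis. Suppose toward a contradiction that $\D^c$ is \emph{not} abelian. Then the two hypotheses of Theorem \ref{not ARC} are satisfied: there exists a faithful pseudo-expectation (namely $\Phi$ itself), and $\D^c$ is non-abelian. Theorem \ref{not ARC} then produces an infinite family $\{\Phi_\lambda : \lambda \in [0,1]\}$ of distinct pseudo-expectations for $(\C,\D)$, contradicting uniqueness.

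There is essentially no obstacle: the work has already been done in Theorem \ref{not ARC}. The only conceptual point to flag is that the corollary is really just the contrapositive of Theorem \ref{not ARC} once one observes that ``\textsf{f!PsE}'' simultaneously guarantees both of the hypotheses of that theorem (existence of a faithful pseudo-expectation, by faithfulness; and uniqueness of the pseudo-expectation, which is what gets contradicted). No additional technology, injectivity argument, or manipulation of $I(\D)$ is needed beyond citing the theorem.
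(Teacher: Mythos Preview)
Your proposal is correct and is exactly the paper's approach: the corollary is stated immediately after Theorem~\ref{not ARC} without a separate proof, precisely because it is the contrapositive you describe. There is nothing to add.
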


\begin{remark} \label{ARC remark} As indicated earlier, we expect
Corollary \ref{!fPse implies ARC} to remain true without the
assumption of faithfulness. At this point, however, we do not have a
proof, even in the case $\D$ abelian. On the other hand, the case of
$W^*$-inclusions is completely settled in the affirmative (Corollary
\ref{W^* ARC}).
\end{remark}

\subsection{Characterization: Unique Pseudo-Expectation Property for
  Abelian Subalgebras}

In this section we give an order-theoretic characterization of the
unique pseudo-expectation property for $C^*$-inclusions $(\C,\D)$,
with $\D$ \underline{abelian}. We remind the reader that if $\D$ is a
unital abelian $C^*$-algebra, then $I(\D)$ is \emph{order complete},
meaning that every nonempty set $S \subseteq I(\D)_{sa}$ with an upper
bound has a supremum \cite[Prop. III.1.7]{Takesaki1979}.

\begin{theorem} \label{unique characterization}
Let $(\C,\D)$ be a $C^*$-inclusion, with $\D$ abelian. Then the following are equivalent:
\begin{enumerate}
\item $(\C,\D)$ has the unique pseudo-expectation property.
\item For all $x \in \C_{sa}$,
\[
    \sup_{I(\D)}\{d \in \D_{sa}: d \leq x\} = \inf_{I(\D)}\{d \in \D_{sa}: d \geq x\}.
\]
\end{enumerate}
\end{theorem}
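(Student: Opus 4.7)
The plan is to exploit the sandwich inequality: for every $\Phi \in \PsExp(\C,\D)$ and every $x \in \C_{sa}$,
\[
    \alpha(x) := \sup_{I(\D)}\{d \in \D_{sa} : d \leq x\} \leq \Phi(x) \leq \inf_{I(\D)}\{d \in \D_{sa} : d \geq x\} =: \beta(x),
\]
which is forced by positivity of $\Phi$ together with $\Phi|_\D = \id$. The supremum and infimum exist because $I(\D)$ is order-complete and both sets are bounded by $\pm\|x\|\cdot 1 \in \D_{sa}$. The direction (ii) $\implies$ (i) follows instantly: once $\alpha(x) = \beta(x)$ for every $x \in \C_{sa}$, the value $\Phi(x)$ is pinned down, and $\Phi$ is determined on all of $\C = \C_{sa} + i\C_{sa}$.

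For the converse (i) $\implies$ (ii) I plan to argue the contrapositive: if some $x_0 \in \C_{sa}$ has $\alpha(x_0) < \beta(x_0)$ (which forces $x_0 \notin \D$, since for $x_0 \in \D$ both quantities equal $x_0$), I will produce two pseudo-expectations taking different values at $x_0$. My strategy is to freely prescribe the value at $x_0$ to be any $t \in I(\D)_{sa}$ with $\alpha(x_0) \leq t \leq \beta(x_0)$ and then extend. Set $\mathcal{S} = \D + \bbC x_0 \subseteq \C$, an operator system whose self-adjoint part is $\{d + r x_0 : d \in \D_{sa},\ r \in \bbR\}$, and define
\[
    \Phi_t^0 : \mathcal{S} \to I(\D), \qquad \Phi_t^0(d + \lambda x_0) := d + \lambda t.
\]
Well-definedness follows because $x_0 \notin \D$ forces the decomposition to be unique. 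For positivity, if $d + r x_0 \geq 0$ with $r > 0$ then $-d/r \leq x_0$, so $-d/r \leq \alpha(x_0) \leq t$ and hence $d + rt \geq 0$; the case $r < 0$ is symmetric via $t \leq \beta(x_0) \leq -d/r$. Because $I(\D)$ is abelian, any positive linear map from an operator system into $I(\D)$ is automatically completely positive (evaluate at characters of $I(\D)$ and use that positive functionals on operator systems are CP), so $\Phi_t^0$ is ucp. Injectivity of $I(\D)$ then extends it to a ucp map $\Phi_t : \C \to I(\D)$, and $\Phi_t|_\D = \id$ because $\D \subseteq \mathcal{S}$; hence $\Phi_t \in \PsExp(\C,\D)$. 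The choices $t = \alpha(x_0)$ and $t = \beta(x_0)$ yield two distinct pseudo-expectations, contradicting (i).

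The main obstacle is the construction step in (i) $\implies$ (ii): verifying positivity of $\Phi_t^0$ on $\mathcal{S}$ (a short case analysis on the sign of $r$, driven directly by the defining properties of $\alpha(x_0)$ and $\beta(x_0)$) and promoting it to complete positivity, where the hypothesis that $\D$ is abelian really enters through the commutativity of $I(\D)$. Once $\Phi_t^0$ is known to be ucp, injectivity of $I(\D)$ delivers pseudo-expectations with prescribed values at $x_0$, and everything else is routine.
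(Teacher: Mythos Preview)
Your proof is correct and follows essentially the same route as the paper: both directions use the sandwich inequality $\alpha(x)\le\Phi(x)\le\beta(x)$, and for the contrapositive of (i) $\Rightarrow$ (ii) both define a map on the operator system $\D+\bbC x_0$ by $d+\lambda x_0\mapsto d+\lambda t$ for any $t$ with $\alpha(x_0)\le t\le\beta(x_0)$, verify positivity by the same sign-of-$\lambda$ case analysis, promote to complete positivity via commutativity of $I(\D)$, and extend by injectivity.
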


\begin{proof} For $x \in \C_{sa}$, set
\[ \ell(x) = \sup_{I(\D)}\{d \in \D_{sa}: d \leq x\} \text{ and } u(x)
= \inf_{I(\D)}\{d \in \D_{sa}: d \geq x\}.
\] It is easy to see that $\ell(x) \leq u(x)$. (If $e \in \D_{sa}$ and
$e \geq x$, then $d \leq e$ for all $d \in \D_{sa}$ such that $d \leq
x$. Thus $\ell(x) \leq e$. Since the choice of $e$ was arbitrary,
$\ell(x) \leq u(x)$.) Clearly $\ell(d) = d = u(d)$ for all $d \in
\D_{sa}$.

(i $\implies$ ii) Let $x \in \C_{sa} \backslash \D_{sa}$ and suppose
$a \in I(\D)_{sa}$ satisfies $\ell(x) \leq a \leq u(x)$. Since $\D
\cap \bbC x = 0$,
\[ \Phi_0:\D + \bbC x \to I(\D): d + \lambda x \mapsto d + \lambda a
\] is a well-defined linear map such that $\Phi_0|_{\D} =
\id$. Suppose $d + \lambda x \geq 0$, so that $d \in \D_{sa}$ and
$\lambda \in \bbR$.
\begin{enumerate}
\item[Case 1:] If $\lambda = 0$, then $d \geq 0$, which implies $d +
\lambda a = d \geq 0$.
\item[Case 2:] If $\lambda > 0$, then $x \geq -\frac{1}{\lambda}d$,
which implies $-\frac{1}{\lambda}d \leq \ell(x) \leq a$, which in turn
implies $d + \lambda a \geq 0$.
\item[Case 3:] If $\lambda < 0$, then $x \leq -\frac{1}{\lambda}d$,
which implies $a \leq u(x) \leq -\frac{1}{\lambda}d$, which in turn
implies $d + \lambda a \geq 0$.
\end{enumerate} The preceding analysis shows that $\Phi_0$ is
positive, and since $I(\D)$ is abelian, it is actually completely
positive. By injectivity, there exists a ucp map $\Phi:\C \to I(\D)$
such that $\Phi|_{\D + \bbC x} = \Phi_0$. Then $\Phi$ is a
pseudo-expectation for $(\C,\D)$ such that $\Phi(x) = a$. It follows
that if there exists $x \in \C_{sa}$ such that $\ell(x) \neq u(x)$,
then $(\C,\D)$ admits multiple pseudo-expectations.

(ii $\implies$ i) Conversely, suppose $\Phi \in \PsExp(\C,\D)$. Let $x
\in \C_{sa}$. If $d \in \D_{sa}$ and $d \leq x$, then $d = \Phi(d)
\leq \Phi(x)$, which implies $\ell(x) \leq \Phi(x)$. Likewise if $d
\in \D_{sa}$ and $d \geq x$, then $d = \Phi(d) \geq \Phi(x)$, which
implies $u(x) \geq \Phi(x)$. Thus if $\ell(x) = u(x)$ for all $x \in
\C_{sa}$, then $\Phi$ is uniquely determined on $\C_{sa}$, therefore
on $\C$.
\end{proof}

\subsection{A Krein-Milman Theorem for Pseudo-Expectations
 when the Subalgebra is
  Abelian}

In this section we prove a Krein-Milman theorem for the
pseudo-expectation space $\PsExp(\C,\D)$, valid for $C^*$-inclusions
$(\C,\D)$, with $\D$ \underline{abelian}.  Our goal is to show that
there is a rich supply of extreme points in $\PsExp(\C,\D)$.  It will
then follow that uniqueness of pseudo-expectations is equivalent to
uniqueness of extreme pseudo-expectations.  One approach to this type
of result
might be the following: first,  introduce an appropriate locally convex topology on the
set of all bounded linear maps from $\C$ into $I(\D)$; second, show
that $\PsExp(\C,\D)$ is compact in this topology; and finally, apply the usual
Krein-Milman theorem.  While this may be a viable
approach, it is not clear (at least to us) how to define such a
topology, so we proceed instead using a route through convexity
theory, which is perhaps less well-traveled.

Our key tool is Kutateladze's Krein-Milman theorem for
subdifferentials of sublinear operators into Kantorovich spaces
\cite{Kutateladze1980}. Let $V$ and $W$ be real vector spaces. Assume
further that $W$ is a \emph{Kantorovich space}, meaning that $W$ is a
vector lattice such that every nonempty subset with an upper bound has
a supremum. Suppose $Q:V \to W$ a \emph{sublinear operator}, meaning
that
\begin{itemize}
\item $Q(\alpha v) = \alpha Q(v)$ for all $v \in V$, $\alpha \geq 0$;
\item $Q(v_1 + v_2) \leq Q(v_1) + Q(v_2)$ for all $v_1, v_2 \in V$.
\end{itemize}
Let $\partial Q$ be the \emph{subdifferential} of $Q$:
\[
    \partial Q = \{T \in \Lin(V,W): T(v) \leq Q(v), ~ v \in V\}.
\]
(Here $\Lin(V,W)$ denotes the set of all real linear maps from $V$ to
$W$.)  Kutateladze's version of the Krein-Milman theorem is the following.

\begin{theorem}[Kutateladze~\cite{Kutateladze1980}]  \label{Kutateladze}  Let $V$ and $W$ be
  real vector spaces with $W$ a Kantorovich space, and suppose
  $Q:V\rightarrow W$ is a sublinear operator. Then the following
  statements hold:
\begin{enumerate}
\item $\Ext(\partial Q) \neq \emptyset.$
\item  For $v\in V$, define 
$    P(v) = \sup_W\{T(v): T \in \Ext(\partial Q)\}$.
Then $P:V\rightarrow W$ is a sublinear operator and
   $ \partial Q = \partial P.$
 \end{enumerate}
\end{theorem}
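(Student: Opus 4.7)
The plan is to follow Kutateladze's route: build extreme points of $\partial Q$ by a Zorn's lemma argument on the face lattice of $\partial Q$, then leverage order completeness of $W$ to show these extreme points already ``support'' $Q$, forcing $P = Q$ and hence $\partial P = \partial Q$.

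First, I would establish a vector-valued Hahn-Banach attainment result: for every $v_0 \in V$ there exists $T_0 \in \partial Q$ with $T_0(v_0) = Q(v_0)$. This mimics the one-dimensional extension proof of the scalar Hahn-Banach theorem, but in the inductive step one must place $T(v_0)$ inside an order interval in $W$; the Kantorovich property of $W$ ensures this interval is nonempty (in fact, has a largest element), so Zorn's lemma produces a maximal extension defined on all of $V$.

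Next, fix $v_0 \in V$ and set $F_{v_0} := \{T \in \partial Q : T(v_0) = Q(v_0)\}$, which is a nonempty face of $\partial Q$ by the previous step. I would apply Zorn's lemma to the family of nonempty faces of $\partial Q$ contained in $F_{v_0}$, ordered by reverse inclusion, obtaining a minimal face $F^*$. I then argue $F^*$ is a singleton whose unique element is an extreme point of $\partial Q$: if $T_1 \neq T_2$ both lie in $F^*$, choose $v_1$ separating them, form $s := \sup_W \{T(v_1) : T \in F^*\}$, and consider the slice $\{T \in F^* : T(v_1) = s\}$, which is a proper nonempty subface of $F^*$, contradicting minimality.

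Combining these gives: for every $v_0 \in V$, there is $T^* \in \Ext(\partial Q)$ with $T^*(v_0) = Q(v_0)$. Statement (i) is immediate, and $P(v_0) \geq Q(v_0)$ follows. The reverse inequality $P \leq Q$ is automatic from $\Ext(\partial Q) \subseteq \partial Q$, so $P = Q$ pointwise, which gives $\partial P = \partial Q$ and the sublinearity of $P$ at once.

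The main obstacle will be making the Zorn step function in the operator setting, specifically verifying that the intersection of a chain of nonempty faces remains nonempty, and that the slice used to contradict minimality of $F^*$ is nonempty. Both points reduce to adapted instances of the vector Hahn-Banach sandwich result applied to a carefully chosen auxiliary sublinear operator (essentially, restricting and re-centering $Q$ on the relevant face). Absent the real-line topology, one must consistently use order completeness of $W$ as a substitute; once these Kantorovich-space analogues are in hand, the rest of the argument tracks the classical Krein-Milman pattern.
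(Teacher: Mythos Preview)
The paper does not supply a proof of this theorem; it is quoted from Kutateladze's article and invoked as a black box in the proof of Theorem~\ref{KM}. So there is no in-paper argument to compare yours against, and your proposal must be judged on its own merits.

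Your outline is the right shape and, importantly, you correctly isolate the crux: the two places where classical Krein--Milman uses compactness, namely (a) that a descending chain of nonempty faces of $\partial Q$ has nonempty intersection, and (b) that the slice $\{T \in F^* : T(v_1) = s\}$ is nonempty. But your treatment of these points---``both reduce to adapted instances of the vector Hahn--Banach sandwich result applied to a carefully chosen auxiliary sublinear operator''---is a promissory note, not a proof, and these are precisely where the content of Kutateladze's theorem lives. Without them the Zorn step does not launch and the minimality contradiction does not close.

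Concretely, what makes both steps work is that every face of $\partial Q$ cut out by fixing values at finitely many vectors is again a subdifferential. For a single vector this reads: with $Q_{v_0}(v) := \inf_{\lambda \ge 0}\bigl(Q(v + \lambda v_0) - \lambda Q(v_0)\bigr)$, the infimum exists in $W$ because the family is bounded below by $-Q(-v)$ and $W$ is Dedekind complete; one checks $Q_{v_0}$ is sublinear and $\partial Q_{v_0} = \{T \in \partial Q : T(v_0) = Q(v_0)\}$. Iterating this, each face in your descending chain is $\partial R$ for some sublinear $R \le Q$, and the Hahn--Banach--Kantorovich theorem then supplies an element of the intersection at limit stages, as well as an element attaining the supremum $s$ in your slice. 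Once you make this mechanism explicit, your argument goes through and indeed yields the stronger conclusion $P = Q$ (from which $\partial P = \partial Q$ and the sublinearity of $P$ are immediate).
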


We are now ready to apply Kutateladze's Theorem to our setting.

\begin{theorem} \label{KM}
Let $(\C,\D)$ be a $C^*$-inclusion, with $\D$ abelian. Then the following
statements hold:
\begin{enumerate}
\item
    $\Ext(\PsExp(\C,\D)) \neq \emptyset$.
\item For  $x \in \C_{sa}$, define
    $P(x) = \sup_{I(\D)}\{\Psi(x): \Psi \in \Ext(\PsExp(\C,\D))\}$.
    Then
\[
\PsExp(\C,\D) = \{\Phi \in \UCP(\C,I(\D)): \Phi(x) \leq P(x) \text{ for
  all } x \in
\C_{sa}\}.
\]
\end{enumerate}
In particular,
\[
\exists! ~ \Phi \in \PsExp(\C,\D) \iff \exists! ~ \Psi \in
\Ext(\PsExp(\C,\D)).
\]
\end{theorem}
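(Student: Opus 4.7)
The plan is to apply Kutateladze's Theorem~\ref{Kutateladze} to a well-chosen sublinear operator. Take $V = \C_{sa}$ as a real vector space, and $W = I(\D)_{sa}$, which is a Kantorovich space because $I(\D)$ is an order-complete abelian \cstaralg\ (as invoked in the proof of Theorem~\ref{unique characterization}). Define $Q: V \to W$ by
\[
    Q(x) = \inf_{I(\D)}\{d \in \D_{sa}: d \geq x\},
\]
the upper envelope $u(x)$ from the proof of Theorem~\ref{unique characterization}. Sublinearity is routine: positive homogeneity is immediate, and for subadditivity, fix $d_2 \in \D_{sa}$ with $d_2 \geq y$; for each $d_1 \in \D_{sa}$ with $d_1 \geq x$ one has $Q(x+y) - d_2 \leq d_1$, so taking infima twice yields $Q(x+y) \leq Q(x) + Q(y)$.

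The heart of the proof is to establish a natural affine bijection $\PsExp(\C,\D) \leftrightarrow \partial Q$. Given $\Phi \in \PsExp(\C,\D)$, positivity together with $\Phi|_{\D} = \id$ gives $\Phi(x) \leq \Phi(d) = d$ whenever $d \in \D_{sa}$ dominates $x$, so $\Phi|_{\C_{sa}} \in \partial Q$. Conversely, suppose $T \in \partial Q$. Applying $T \leq Q$ to both $d$ and $-d$ for each $d \in \D_{sa}$ forces $T(d) = d$; in particular $T(1) = 1$. For $x \geq 0$ in $\C_{sa}$, the bound $T(-x) \leq Q(-x) \leq 0$ gives $T(x) \geq 0$, so $T$ is positive. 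The $\bbC$-linear extension $\Phi:\C \to I(\D)$ defined by $\Phi(x + iy) = T(x) + iT(y)$ is then unital and positive; since $I(\D)$ is abelian, $\Phi$ is automatically completely positive, whence $\Phi \in \PsExp(\C,\D)$. This correspondence is affine, so extreme points match extreme points.

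Parts (i) and (ii) now follow mechanically from Kutateladze. For (i), $\Ext(\partial Q) \neq \emptyset$ transports to $\Ext(\PsExp(\C,\D)) \neq \emptyset$. For (ii), with $P$ as in the statement, Kutateladze gives $\partial P = \partial Q$; a ucp map $\Phi:\C \to I(\D)$ satisfies $\Phi(x) \leq P(x)$ on $\C_{sa}$ iff $\Phi|_{\C_{sa}} \in \partial P = \partial Q$, which by the bijection (with the automatic identity condition on $\D$ built in) is equivalent to $\Phi \in \PsExp(\C,\D)$. For the final ``in particular'' clause: if $\Ext(\PsExp(\C,\D)) = \{\Psi\}$, then $P(x) = \Psi(x)$, and applying the bound from (ii) to both $x$ and $-x$ forces $\Phi = \Psi$ on $\C_{sa}$, hence on all of $\C$; the reverse implication is immediate from (i) combined with $\Ext(\PsExp(\C,\D)) \subseteq \PsExp(\C,\D)$. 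The main obstacle is establishing the bijection in the second paragraph—in particular, verifying that mere membership in $\partial Q$ forces $T|_{\D_{sa}} = \id$ and that real positivity upgrades to complete positivity on $\C$. The abelianness of $\D$ is essential both for endowing $W$ with its Kantorovich structure and for this positive-to-CP upgrade.
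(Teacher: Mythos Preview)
Your proof is correct and follows the same overall strategy as the paper: identify $\PsExp(\C,\D)$ with the subdifferential $\partial Q$ of a sublinear operator $Q:\C_{sa}\to I(\D)_{sa}$ and then invoke Kutateladze's theorem. The only difference is the choice of $Q$: the paper takes $Q(x)=\sup_{I(\D)}\{\Phi(x):\Phi\in\PsExp(\C,\D)\}$, whereas you take $Q(x)=u(x)=\inf_{I(\D)}\{d\in\D_{sa}:d\geq x\}$. These two sublinear operators actually coincide (this is implicit in the proof of Theorem~\ref{unique characterization}, where for each $x$ a pseudo-expectation with $\Phi(x)=u(x)$ is constructed), so the subdifferentials are literally the same set; the verifications that $\partial Q$ corresponds to $\PsExp(\C,\D)$---positivity from $Q(-x)\leq 0$ for $x\geq 0$, the identity on $\D_{sa}$ from $Q(\pm d)=\pm d$, and the upgrade to complete positivity via abelianness of $I(\D)$---are identical in both arguments. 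Your formulation has the minor advantage that $Q$ is defined without reference to $\PsExp(\C,\D)$ itself, making the sublinearity check self-contained rather than relying on the convexity of $\PsExp(\C,\D)$.
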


\begin{proof}
  Since $\D$ is abelian, $I(\D)_{sa}$ is a Kantorovich space. For all
  $x \in \C_{sa}$, define
\[
    Q(x) = \sup_{I(\D)}\{\Phi(x): \Phi \in \PsExp(\C,\D)\}.
\] It is easy to see that $Q:\C_{sa} \to I(\D)_{sa}$ is a sublinear
operator. We claim that
\[ \PsExp(\C,\D) = \{\Phi \in \UCP(\C,I(\D)): \Phi|_{\C_{sa}}
\in \partial Q\} = \{\tilde{T}: T \in \partial Q\},
\] where $\tilde{T}:\C \to I(\D)$ is the \emph{complexification} of
$T:\C_{sa} \to I(\D)_{sa}$. Indeed, the inclusions of the first set
into the second, and the second set into the third, are
tautological. Now let $T \in \partial Q$. Then
\[ x \in \C_+ \implies -T(x) = T(-x) \leq Q(-x) \leq 0 \implies T(x)
\geq 0.
\] Thus $\tilde{T}$ is positive, and since $I(\D)$ is abelian,
completely positive. Also
\[ d \in \D_{sa} \implies \pm T(d) = T(\pm d) \leq Q(\pm d) = \pm d
\implies T(d) = d.
\] Therefore, $\tilde{T}
\in \PsExp(\C,\D)$.

Invoking Kutateladze's Krein-Milman theorem, we have that
\[ \Ext(\PsExp(\C,\D)) \neq \emptyset,
\] and in fact
\[ \PsExp(\C,\D) = \{\Phi \in \UCP(\C,I(\D)): \Phi|_{\C_{sa}}
\in \partial P\},
\] where for all $x \in \C_{sa}$,
\[ P(x) = \sup_{I(\D)}\{\tilde{T}(x): T \in \Ext(\partial Q)\} =
\sup_{I(\D)}\{\Psi(x): \Psi \in \Ext(\PsExp(\C,\D))\}.
\]

If there exists a unique $\Phi \in \PsExp(\C,\D)$, then clearly there
exists a unique $\Psi \in \Ext(\PsExp(\C,\D))$. Conversely, suppose
there exists a unique $\Psi \in \Ext(\PsExp(\C,\D))$. Then for all
$\Phi \in \PsExp(\C,\D)$,
\[ x \in \C_{sa} \implies \pm \Phi(x) = \Phi(\pm x) \leq P(\pm x) =
\Psi(\pm x) = \pm \Psi(x) \implies \Phi(x) = \Psi(x),
\] and so $\Phi = \Psi$.
\end{proof}

\subsection{Abelian Inclusions} \label{abelian section}

In this section we consider the unique pseudo-expectation property for
$C^*$-inclusions $(\A,\D)$, with $\A$ abelian. By Gelfand duality,
these are precisely the $C^*$-inclusions $(C(Y),C(X))$, where $X$ and
$Y$ are compact Hausdorff spaces. We recall that unital
$*$-monomorphisms $\pi:C(X) \to C(Y)$ correspond bijectively to
continuous surjections $j:Y \to X$. Indeed, if $j:Y \to X$ is a
continuous surjection, then $\pi_j:C(X) \to C(Y):f \mapsto f \circ j$
is a unital $*$-monomorphism. We may identify $\pi_j(C(X))$ with the
continuous functions on $Y$ which are constant on the fibers
$j^{-1}(x)$, $x \in X$. Conversely, if $\pi:C(X) \to C(Y)$ is a unital
$*$-monomorphism, then for each $y \in Y$ there exists a unique $j(y)
\in X$ such that $\delta_y \circ \pi = \delta_{j(y)}$, and it is easy
to verify that $j:Y \to X$ is a continuous surjection such that $\pi_j
= \pi$.

In light of Theorem \ref{KM}, to characterize when $\PsExp(\A,\D)$ is
a singleton, it suffices to characterize when $\Ext(\PsExp(\A,\D))$ is
a singleton. As we saw in Proposition \ref{basic properties PsExp},
\[ \Ext(\PsExp(\A,\D)) \subseteq \Ext(\UCP(\A,I(\D))).
\] On the other hand, $\Psi \in \Ext(\UCP(\A,I(\D)))$ iff $\Psi$ is
multiplicative (i.e., a unital $*$-homomorphism)
\cite[Cor. 3.1.6]{Stormer2013}. Thus the extreme pseudo-expectations
for $(\A,\D)$ are precisely the \emph{multiplicative
pseudo-expectations}:
\[ \Ext(\PsExp(\A,\D)) = \PsExp^\times(\A,\D).
\]

\begin{theorem} \label{mult_PsExp} Let $(\A,\D)$ be an abelian
inclusion. Then the mapping
\[ \PsExp^\times(\A,\D) \to \left\{\substack{\text{maximal
$\D$-disjoint}\\ \text{ideals of $\A$}}\right\} \quad\text{given
by}\quad
\Psi \mapsto \ker(\Psi)
\] is a bijection. In particular, $\PsExp^\times(\A,\D)$ is a
singleton iff there exists a unique maximal $\D$-disjoint ideal $\I
\ideal \A$.
\end{theorem}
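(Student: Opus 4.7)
The plan is to prove the claimed bijection by verifying well-definedness, injectivity, and surjectivity of $\Psi \mapsto \ker(\Psi)$ separately. For well-definedness, given $\Psi \in \PsExp^\times(\A,\D)$, the kernel $\ker(\Psi)$ is automatically a closed two-sided ideal of $\A$, and it is $\D$-disjoint because $\Psi|_\D = \id$. To check maximality, suppose $\J \supsetneq \ker(\Psi)$ is $\D$-disjoint; then $\Psi(\J)$ is a nonzero ideal of the abelian $C^*$-algebra $\Psi(\A) \subseteq I(\D)$. If $d = \Psi(h) \in \Psi(\J) \cap \D$ with $h \in \J$, then $h - d \in \ker(\Psi) \subseteq \J$, so $d = h - (h-d) \in \J \cap \D = 0$; hence $\Psi(\J) \cap \D = 0$. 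By Example~\ref{OSE} the inclusion $(I(\D),\D)$ has the faithful unique pseudo-expectation property, hence so does $(\Psi(\A),\D)$ by Proposition~\ref{hereditary from above}; then Theorem~\ref{faithful characterization} gives that $(\Psi(\A),\D)$ is (hereditarily) essential, forcing $\Psi(\J) = 0$, a contradiction.

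For injectivity of $\Psi \mapsto \ker(\Psi)$, suppose $\ker(\Psi_1) = \ker(\Psi_2) =: \I$ and factor $\Psi_i = \tilde\Psi_i \circ q$ through the quotient $q : \A \to \A/\I$, where the $\tilde\Psi_i : \A/\I \to I(\D)$ are unital $*$-monomorphisms extending $\D \hookrightarrow I(\D)$. Regarding $\A/\I$ as a subalgebra of $I(\D)$ via $\tilde\Psi_1$, the composition $\tilde\Psi_2 \circ \tilde\Psi_1^{-1}$ is ucp and fixes $\D$; by injectivity of $I(\D)$ it extends to a ucp map $\Theta : I(\D) \to I(\D)$ with $\Theta|_\D = \id$, and rigidity of the injective envelope then forces $\Theta = \id$. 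Consequently $\tilde\Psi_1 = \tilde\Psi_2$, and therefore $\Psi_1 = \Psi_2$.

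For surjectivity, let $\I \ideal \A$ be maximal $\D$-disjoint. Passing to Gelfand spectra, write $\A = C(Y)$, $\D = C(X)$, $\A/\I = C(K)$ for some closed $K \subseteq Y$, and $I(\D) = C(\tilde X)$, where $\tilde X$ is extremally disconnected; denote the corresponding continuous surjections by $j : Y \to X$, $j|_K : K \to X$, and $\pi : \tilde X \to X$. Maximality of $\I$ translates into irreducibility of $j|_K$: no proper closed subset of $K$ surjects onto $X$. By projectivity of the extremally disconnected space $\tilde X$ in the category of compact Hausdorff spaces (Gleason's theorem), the surjection $j|_K$ admits a continuous lift $\sigma : \tilde X \to K$ with $j|_K \circ \sigma = \pi$, and irreducibility of $j|_K$ forces $\sigma(\tilde X) = K$. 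The Gelfand dual $\sigma^* : C(K) \to C(\tilde X)$ is then an injective unital $*$-homomorphism extending $\D \hookrightarrow I(\D)$, and $\Psi := \sigma^* \circ q \in \PsExp^\times(\A,\D)$ satisfies $\ker(\Psi) = \I$.

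The main obstacle lies precisely in this last step: the surjectivity argument relies on the concrete description of $I(\D)$ for abelian $\D$ as $C(\tilde X)$ with $\tilde X$ extremally disconnected, together with Gleason's projectivity theorem, to produce the lift $\sigma$. The remaining parts of the proof are largely formal manipulations with the essentiality and rigidity of the injective envelope.
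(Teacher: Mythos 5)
Your proof is correct. The well-definedness step (via essentiality of $(\Psi(\A),\D)$, obtained from Example~\ref{OSE}, Proposition~\ref{hereditary from above}, and Theorem~\ref{faithful characterization}) and the injectivity step (extend $\tilde\Psi_2\circ\tilde\Psi_1^{-1}$ by injectivity and invoke rigidity) are essentially identical to the paper's argument. Where you genuinely diverge is surjectivity: the paper extends the unital $*$-homomorphism $\I+\D\to\D\subseteq I(\D)$, $h+d\mapsto d$, to all of $\A$ by citing the injectivity of $I(\D)$ in the category $\ACst$ of abelian $C^*$-algebras, and then uses maximality of $\I$ at the end to force $\ker(\Psi)=\I$ (since a priori the extension could have a larger $\D$-disjoint kernel). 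You instead pass to Gelfand spectra and invoke Gleason's projectivity theorem for the extremally disconnected space $\tilde X$ to produce the lift $\sigma:\tilde X\to K$, using irreducibility of $j|_K$ (the dual form of maximality) to make $\sigma$ surjective and hence $\sigma^*$ injective, so that $\ker(\Psi)=\I$ on the nose. The two routes are Gelfand-dual to one another --- the categorical injectivity the paper cites from \cite{HadwinPaulsen2011} is itself proved via Gleason's theorem --- so yours is essentially an unwinding of that citation. What your version buys is a concrete, self-contained construction of the multiplicative pseudo-expectation with the prescribed kernel; what the paper's version buys is brevity and independence from the topological model of $I(\D)$.
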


\begin{proof} Let $\Psi \in \PsExp^\times(\A,\D)$. Then $\ker(\Psi)$
is a $\D$-disjoint ideal of $\A$, and the map $\I \mapsto \Psi(\I)$ is
an order-preserving bijection between the $\D$-disjoint ideals of $\A$
containing $\ker(\Psi)$ and the $\D$-disjoint ideals of
$\Psi(\A)$. Since $(I(\D),\D)$ has the faithful unique
pseudo-expectation property (Example \ref{OSE}), it is hereditarily
essential, by Theorem \ref{faithful characterization}. Thus
$(\Psi(\A),\D)$ is essential, so that the only $\D$-disjoint ideal of
$\Psi(\A)$ is $0$. It follows that the only $\D$-disjoint ideal of
$\A$ containing $\ker(\Psi)$ is $\ker(\Psi)$ itself, which says that
$\ker(\Psi)$ is a maximal $\D$-disjoint ideal of $\A$.

Conversely, suppose $\I \subseteq \A$ is a maximal $\D$-disjoint
ideal. The map $\Psi_0:\I + \D \to \D: h + d \mapsto d$ is a unital
$*$-homomorphism. Since $I(\D)$ is an injective unital abelian
$C^*$-algebra, there exists a unital $*$-homomorphism $\Psi:\A \to
I(\D)$ such that $\Psi|_{\I + \D} = \Psi_0$
\cite{HadwinPaulsen2011}. Clearly $\Psi \in \PsExp^\times(\A,\D)$ and
$\I \subseteq \ker(\Psi) \subseteq \A$ is a $\D$-disjoint ideal. Thus
$\ker(\Psi) = \I$, by maximality.

Finally, suppose $\Psi_1, \Psi_2 \in \PsExp^\times(\A,\D)$, with
$\ker(\Psi_1) = \ker(\Psi_2)$. Define $\iota:\Psi_1(\A) \to
\Psi_2(\A)$ by the formula $\iota(\Psi_1(x)) = \Psi_2(x)$, $x \in
\A$. Then $\iota$ is a unital $*$-isomorphism which fixes $\D$. By
injectivity, there exists an unital $*$-homomorphism
$\overline{\iota}:I(\D) \to I(\D)$ such that
$\overline{\iota}|_{\Psi_1(\A)} = \iota$. By the rigidity of the
injective envelope, $\overline{\iota} = \id$, so that $\Psi_1 =
\Psi_2$.
\end{proof}

\begin{remark} Taking $\D = \bbC$ in Theorem \ref{mult_PsExp} above,
one recovers the well-known bijective correspondence between the
characters and the maximal ideals of $\A$.
\end{remark}

\begin{corollary} \label{abelian} Let $(C(Y),C(X))$ be an abelian
inclusion with corresponding continuous surjection $j:Y \to X$. Then
the following are equivalent:
\begin{enumerate}
\item There exists a unique pseudo-expectation for $(C(Y),C(X))$.
\item There exists a unique multiplicative pseudo-expectation for
$(C(Y),C(X))$.
\item There exists a unique maximal $C(X)$-disjoint ideal in
$C(Y)$.
\item There exists a unique minimal closed set $K \subseteq Y$
such that $j(K) = X$.
\end{enumerate}
\end{corollary}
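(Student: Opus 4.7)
The plan is to link the four conditions by a short chain: (i)$\iff$(ii) from the general Krein--Milman theorem for pseudo-expectations, (ii)$\iff$(iii) from the bijection of Theorem \ref{mult_PsExp}, and (iii)$\iff$(iv) from Gelfand duality.

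For (i)$\iff$(ii), first observe that when $\A = C(Y)$ is abelian, every extreme point of $\UCP(\A, I(\D))$ is multiplicative (by the result cited just before Theorem \ref{mult_PsExp}), and since $\PsExp(\A,\D)$ is a face of $\UCP(\A, I(\D))$ (Proposition \ref{basic properties PsExp}), we have $\Ext(\PsExp(\A,\D)) = \PsExp^\times(\A,\D)$. Now apply Theorem \ref{KM}: uniqueness of a pseudo-expectation is equivalent to uniqueness of an extreme pseudo-expectation, i.e., of a multiplicative pseudo-expectation.

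For (ii)$\iff$(iii), this is an immediate consequence of Theorem \ref{mult_PsExp}, which provides a bijection between $\PsExp^\times(\A,\D)$ and the set of maximal $\D$-disjoint ideals of $\A$.

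For (iii)$\iff$(iv), the plan is to transcribe the $C(X)$-disjoint ideal structure of $C(Y)$ into the closed subset structure of $Y$ using Gelfand duality. Closed ideals of $C(Y)$ correspond bijectively and inclusion-reversingly to closed subsets of $Y$ via
\[
\I \longmapsto K_\I := \{y \in Y : f(y) = 0 \text{ for all } f \in \I\}, \qquad K \longmapsto \I_K := \{f \in C(Y) : f|_K = 0\}.
\]
The key point is to show that $\I$ is $C(X)$-disjoint if and only if $j(K_\I) = X$. Indeed, $\I \cap \pi_j(C(X)) = 0$ means that the only $g \in C(X)$ with $g \circ j \in \I$ is $g = 0$, which is the statement that the only $g \in C(X)$ vanishing on $j(K_\I)$ is $g = 0$; since $K_\I$ is compact, $j(K_\I)$ is compact and hence closed in $X$, so this is equivalent to $j(K_\I) = X$. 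Under the inclusion-reversing correspondence, maximal $C(X)$-disjoint ideals correspond precisely to minimal closed subsets $K \subseteq Y$ with $j(K) = X$, giving (iii)$\iff$(iv).

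The main obstacle is really only the correct bookkeeping in the last step; the only subtlety is the appeal to compactness of $K_\I$ to turn ``$j(K)$ dense in $X$'' into ``$j(K) = X$'', which is automatic here since $Y$ is compact Hausdorff.
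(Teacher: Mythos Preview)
Your proof is correct and follows essentially the same route as the paper's own argument: (i)$\iff$(ii) via Theorem~\ref{KM} together with the identification $\Ext(\PsExp(\A,\D)) = \PsExp^\times(\A,\D)$, (ii)$\iff$(iii) via Theorem~\ref{mult_PsExp}, and (iii)$\iff$(iv) via the order-reversing Gelfand correspondence between closed ideals and closed subsets. You supply more detail than the paper in the (iii)$\iff$(iv) step---in particular the compactness observation ensuring $j(K_\I)$ is closed---but the structure and key citations are identical.
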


\begin{proof} (i $\iff$ ii) follows from Theorem \ref{KM}.

(ii $\iff$ iii) Theorem \ref{mult_PsExp}.

(iii $\iff$ iv) The map $K \mapsto \{g \in C(Y): g|_K = 0\}$ is an
order-reversing bijection between the closed sets $K \subseteq Y$ such
that $j(K) = X$ and the $C(X)$-disjoint ideals in $C(Y)$.
\end{proof}

\begin{corollary} \label{faithful abelian} Let $(C(Y),C(X))$ be an
abelian inclusion with corresponding continuous surjection $j:Y \to
X$. Then the following are equivalent:
\begin{enumerate}
\item There exists a unique pseudo-expectation for $(C(Y),C(X))$,
which is faithful.
\item There exists a unique multiplicative pseudo-expectation for
$(C(Y),C(X))$, which is faithful.
\item $(C(Y),C(X))$ is essential.
\item If $K \subseteq Y$ is closed and $j(K) = X$, then $K = Y$.
\end{enumerate}
\end{corollary}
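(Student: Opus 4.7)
The plan is to obtain Corollary \ref{faithful abelian} as a faithfulness-tracking refinement of Corollary \ref{abelian}, relying on Theorem \ref{mult_PsExp} to identify the kernel of any multiplicative pseudo-expectation with a maximal $C(X)$-disjoint ideal, and on Theorem \ref{KM} together with the observation in Section \ref{abelian section} that $\Ext(\PsExp(C(Y),C(X))) = \PsExp^\times(C(Y),C(X))$, so that uniqueness of pseudo-expectations coincides with uniqueness of multiplicative pseudo-expectations.

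The equivalence (iii $\iff$ iv) is immediate from the order-reversing bijection already used in the proof of Corollary \ref{abelian}: a closed $K \subseteq Y$ satisfies $j(K) = X$ exactly when $\{g \in C(Y) : g|_K = 0\}$ is a $C(X)$-disjoint ideal, with $K = Y$ corresponding to the zero ideal. Hence $K = Y$ is the only such closed set precisely when the zero ideal is the only $C(X)$-disjoint ideal, which is the definition of essentiality. Similarly, (i $\iff$ ii) follows from Corollary \ref{abelian}(i $\iff$ ii): uniqueness in (i) and in (ii) are the same condition, and when it holds the unique pseudo-expectation must coincide with the unique multiplicative pseudo-expectation, so faithfulness of one is faithfulness of the other.

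The central step is (ii $\iff$ iii), which I would handle via Theorem \ref{mult_PsExp}. Assuming (iii), the zero ideal is the only, and hence the unique maximal, $C(X)$-disjoint ideal of $C(Y)$; Theorem \ref{mult_PsExp} then produces a unique multiplicative pseudo-expectation $\Psi$ with $\ker(\Psi) = 0$, and since $\Psi$ is a $*$-homomorphism this injectivity is equivalent to faithfulness, giving (ii). Conversely, assuming (ii), Theorem \ref{mult_PsExp} identifies $\ker(\Psi)$ with the unique maximal $C(X)$-disjoint ideal, and faithfulness of $\Psi$ forces $\ker(\Psi) = 0$; hence every $C(X)$-disjoint ideal is zero, giving (iii).

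The main obstacle is essentially bookkeeping to ensure that the uniqueness and faithfulness qualifiers propagate consistently through the three cited results; no new analytic input is needed beyond Corollary \ref{abelian}, Theorem \ref{mult_PsExp}, and Theorem \ref{KM}.
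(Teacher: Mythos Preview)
Your proposal is correct and matches the paper's approach exactly: the paper's own proof is the single sentence ``Same proof as Corollary \ref{abelian},'' and what you have written is precisely that proof with the faithfulness qualifiers tracked through the bijections of Theorem \ref{mult_PsExp} and the order-reversing correspondence between closed sets and $C(X)$-disjoint ideals.
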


\begin{proof} Same proof as Corollary \ref{abelian}.
\end{proof}

\section{Examples} \label{examples section}

Now we provide additional examples of $C^*$-inclusions with (and
without) the unique pseudo-expectation property (resp. the faithful
unique pseudo-expectation property). In proving that these examples
are actually examples, we will take advantage of some of the general
theory developed so far.

In Section \ref{basics section}, we mentioned that the unique
pseudo-expectation property is not hereditary from below. Equipped
with the results of the previous section, it is easy to give an example which
demonstrates this.

\begin{example} \label{not hereditary from below} There exist abelian
inclusions $\D \subseteq \D_0 \subseteq \A$ such that $(\A,\D)$ has
the unique pseudo-expectation property, but $(\A,\D_0)$ does not. That
is, the unique pseudo-expectation property is not hereditary from
below.
\end{example}

\begin{proof} Let $X = [0,1]$, $X_0 = [0,1] \cup \{2\}$, and $Y =
[0,1] \cup \{2, 3\}$. Define continuous surjections $j:X_0 \to X$ and
$k:Y \to X_0$ by the formulas
\[ j(t) = \begin{cases} t, & t \in [0,1]\\ 1, & t = 2 \end{cases}
\text{ and } k(t) = \begin{cases} t, & t \in [0,1]\\ 2, & t \in \{2,
3\} \end{cases}.
\] Then $i = j \circ k:Y \to X$ is the continuous surjection
\[ i(t) = \begin{cases} t, & t \in [0,1]\\ 1, & t \in \{2,
3\} \end{cases}.
\] Clearly there exists a unique minimal closed set $K \subseteq Y$
such that $i(K) = X$, namely $K = [0,1]$. On the other hand, there are
multiple minimal closed sets $L \subseteq Y$ such that $k(L) = X_0$,
for example both $L = [0,1] \cup \{2\}$ and $L = [0,1] \cup
\{3\}$. Thus by Corollary \ref{abelian} we have inclusions $C(X)
\subseteq C(X_0) \subseteq C(Y)$ such that $(C(Y),C(X))$ has the
unique pseudo-expectation property, but $(C(Y),C(X_0))$ does not.
\end{proof}

We can also provide examples of the poor behavior of the unique
pseudo-expectation property with respect to quotients described in
Section \ref{quotient section}. To that end, let $(C(Y),C(X))$ be an
abelian inclusion, with corresponding continuous surjection $j:Y \to
X$. Suppose $Z \subseteq Y$ is closed and $\J = \{g \in C(Y): g|_Z =
0\} \ideal C(Y)$. Then $\J \cap C(X) = \{f \in C(X): f|_{j(Z)} = 0\}
\ideal C(X)$. Thus
\[ (C(Y)/\J,C(X)/(\J \cap C(X))) \cong (C(Z),C(j(Z)),
\] with corresponding continuous surjection $j|_Z:Z \to
j(Z)$. Furthermore $\J \cap C(X) \ideal C(X)$ is regular iff
$\o{j(Z)^\circ} = j(Z)$, where the interior and closure are calculated
in $X$.

\begin{example} \label{non-unique quotient} There exists an abelian
inclusion $(\A,\D)$ and $\J \ideal \A$ such that $(\A,\D)$ has the
unique pseudo-expectation property, but $(\A/\J,\D/(\J \cap \D))$ does
not. Of course, $\J \cap \D \ideal \D$ is not a regular ideal.
\end{example}

\begin{proof} Let $Y = ([0,1] \times \{0\}) \cup (\{1\} \times [0,1])
\subseteq [0,1] \times [0,1]$, $X = [0,1]$, and $j:Y \to X$ be defined
by the formula $j(s,t) = s$, $(s,t) \in Y$. Then there exists a unique
minimal closed set $K \subseteq Y$ such that $j(K) = X$, namely $K =
[0,1] \times \{0\}$. Thus $(C(Y),C(X))$ has the unique
pseudo-expectation property, by Corollary \ref{abelian}. Now let $Z =
\{1\} \times [0,1]$, a closed subset of $Y$. Then $j(Z) = \{1\}$, and
there does not exist a unique minimal closed set $L \subseteq Z$ such
that $j(L) = j(Z)$. Thus $(C(Z),C(j(Z))$ does not have the unique
pseudo-expectation property. Of course $\o{j(Z)^\circ} = \emptyset
\subsetneq j(Z)$.
\end{proof}

\begin{example} \label{unfaithful quotient} There exists an abelian
  inclusion $(\A,\D)$ and $\J \ideal \A$ such that $(\A,\D)$ has the
  faithful unique pseudo-expectation property and $\J \cap \D$ is a
  regular ideal in $\D$, but $(\A/\J,\D/(\J \cap \D))$ does not have
  the faithful unique pseudo-expectation property.
\end{example}

\begin{proof} Let $Y = ([0,1/2] \times \{0\}) \cup ([1/2,1] \times
\{1\}) \subseteq [0,1] \times [0,1]$, $X = [0,1]$, and $j:Y \to X$ be
defined by the formula $j(s,t) = s$, $(s,t) \in Y$. Then there exists
a unique minimal closed set $K \subseteq Y$ such that $j(K) = X$,
namely $K = Y$. Thus $(C(Y),C(X))$ has the faithful unique
pseudo-expectation property, by Corollary \ref{faithful abelian}. Now
let $Z = ([0,1/2] \times \{0\}) \cup \{(1/2,1)\}$, a closed subset of
$Y$. Then $j(Z) = [0,1/2]$, so that $\o{j(Z)^\circ} = j(Z)$. There
exists a unique minimal closed set $L \subseteq Z$ such that $j(L) =
j(Z)$, namely $L = [0,1/2] \times \{0\}$. Since $L \subsetneq Z$,
$(C(Z),C(j(Z)))$ has the unique pseudo-expectation property, but not
the faithful unique pseudo-expectation property.
\end{proof}

In the introduction we mentioned that the inclusion
$(B(L^2[0,1]),C[0,1])$ admits no conditional expectations (Example
\ref{!CE not hereditary from below}). An interesting question (posed
to us by Philip Gipson) is whether or not $(B(L^2[0,1]),C[0,1])$ has a
unique pseudo-expectation. It turns out that even the abelian
inclusion $(L^\infty[0,1],C[0,1])$ admits multiple
pseudo-expectations. We found it difficult to fit this example into
the context of Corollary~\ref{abelian}, so we utilize
Theorem~\ref{unique characterization} instead.

\begin{example} The abelian inclusion $(L^\infty[0,1],C[0,1])$ has
infinitely many pseudo-expectations, none of which are faithful.
\end{example}

\begin{proof} Let $B^\infty[0,1]$ be the $C^*$-algebra of bounded
complex-valued Borel functions on $[0,1]$. Let $N[0,1] \ideal
B^\infty[0,1]$ be the Lebesgue-null functions, so that
$B^\infty[0,1]/N[0,1] = L^\infty[0,1]$. Likewise, let $M[0,1] \ideal
B^\infty[0,1]$ be the meager functions, so that $B^\infty[0,1]/M[0,1]
= D[0,1]$, the \emph{Dixmier algebra}. Recall that $D[0,1] =
I(C[0,1])$ \cite{HadwinPaulsen2011}.

Now let $A \subseteq [0,1]$ be a Borel set such that both $A$ and
$A^c$ are measure dense, meaning that $|V \cap A| > 0$ and $|V \cap
A^c| > 0$ for every open set $V \subseteq [0,1]$. (Here $|\cdot|$
stands for Lebesgue measure.) One possible construction of $A$ can be
found in \cite{Rudin1983}.

A measure-theoretic argument shows that if $f \in C[0,1]_{sa}$ and $f
+ N[0,1] \leq \chi_A + N[0,1]$, then $f \leq 0$.
Likewise,
if $f \in C[0,1]_{sa}$ and $f + N[0,1] \geq \chi_A + N[0,1]$, then $f
\geq 1$.

It follows that
\[ \sup_{D[0,1]}\{f + M[0,1]: f \in C[0,1]_{sa}, ~ f + N[0,1] \leq
\chi_A + N[0,1]\} = 0 + M[0,1]
\] and
\[ \inf_{D[0,1]}\{f + M[0,1]: f \in C[0,1]_{sa}, ~ f + N[0,1] \geq
\chi_A + N[0,1]\} = 1 + M[0,1].
\] By Theorem \ref{unique characterization}, $(L^\infty[0,1],C[0,1])$
does not have the unique pseudo-expectation property.

It remains to show that no pseudo-expectation for
$(L^\infty[0,1],C[0,1])$ is faithful. By
\cite[Thm. 2.21]{HadwinPaulsen2011}, there are $C^*$-inclusions
$C[0,1] \subseteq I(C[0,1]) \subseteq L^\infty[0,1]$. Suppose there
exists a faithful pseudo-expectation $\Phi \in
\PsExp(L^\infty[0,1],C[0,1])$. Then $\Phi:L^\infty[0,1] \to I(C[0,1])$
is a ucp map such that $\Phi|_{C[0,1]} = \id$. By the rigidity of the
injective envelope, $\Phi|_{I(C[0,1])} = \id$. Thus $\Phi$ is a
faithful conditional expectation of $L^\infty[0,1]$ onto
$I(C[0,1])$. It follows from \cite[Lemma 1]{Kadison1956} that $D[0,1]
= I(C[0,1])$ is a $W^*$-algebra, contradicting \cite[Exercise
5.7.21]{KadisonRingrose1983}.
\end{proof}

\subsubsection*{\textsc{Transformation Group
    $C^*$-Algebras.}} \label{transformation section} Let $\Gamma$ be a
discrete group acting on a compact Hausdorff space $X$ by
homeomorphisms, and let $C(X) \rtimes_r \Gamma$ (resp. $C(X) \rtimes
\Gamma$) be the corresponding reduced (resp. full) crossed product
(see \cite[Ch. 4]{BrownOzawa2008} for more details). In this section
we examine the unique pseudo-expectation property for the inclusions
$(C(X) \rtimes_r \Gamma,C(X))$ and $(C(X) \rtimes_r
\Gamma,C(X)^c)$. We recall that elements of $C(X) \rtimes_r \Gamma$
have formal series representations $\sum_{t \in \Gamma} a_t\lambda_t$,
where $a_t \in C(X)$ for all $t \in \Gamma$, and that there exists a
faithful conditional expectation $E:C(X) \rtimes_r \Gamma \to C(X)$,
namely
\[ E\left(\sum_{t \in \Gamma} a_t\lambda_t\right) = a_e.
\]

For $s\in \Gamma$, write $F_s=\{x\in X: sx=x\}$ for the \emph{fixed points}
of $s$,  and for
$x\in X$, let \[H^x:=\{s\in \Gamma: x\in (F_s)^\circ\}.\]
The condition that $H^x$ is abelian for every $x\in X$ is equivalent
to the condition that $C(X)^c$ is
abelian~\cite[Theorem~6.6]{Pitts2012}.  The following result shows
that when either of these equivalent conditions hold, then $(C(X) \rtimes_r
\Gamma,C(X)^c)$ has the faithful unique pseudo-expectation property.
\begin{proposition}[{\cite[Theorem~6.10]{Pitts2012}}] \label{relative commutant crossed} If $C(X)^c$ is
abelian, then $(C(X) \rtimes_r \Gamma,C(X)^c)$ has the faithful unique pseudo-expectation property. In particular, this happens when $\Gamma$ is abelian.
\end{proposition}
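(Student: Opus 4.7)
The statement is explicitly attributed to \cite[Theorem~6.10]{Pitts2012}, so the plan is to reduce to the already-proved Pitts Theorem \ref{Pitts} for regular MASA inclusions, and then explain what extra work is needed to deliver faithfulness. The overall route: verify that $C(X)^c$ is a regular MASA in $C(X) \rtimes_r \Gamma$ as soon as it is abelian, apply Theorem \ref{Pitts}(i) to get uniqueness, and then do additional work to obtain faithfulness.

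\textbf{MASA.} The hypothesis says $C(X)^c$ is abelian, so $C(X)^c \subseteq (C(X)^c)^c$. Conversely, anything commuting with $C(X)^c$ commutes with $C(X) \subseteq C(X)^c$, hence lies in $C(X)^c$ by definition of the relative commutant; so $(C(X)^c)^c = C(X)^c$ and $C(X)^c$ is maximal abelian in $C(X) \rtimes_r \Gamma$.

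\textbf{Regularity.} Each $a \lambda_t$ with $a \in C(X)$ and $t \in \Gamma$ normalizes $C(X)$; these span a dense subalgebra of $C(X) \rtimes_r \Gamma$. One then checks that every element normalizing $C(X)$ also normalizes $C(X)^c$: if $u$ normalizes $C(X)$ and $b \in C(X)^c$, then for any $a \in C(X)$ one has $ubu^* a = u b (u^* a u) u^* = u (u^* a u) b u^* = a u b u^*$ since $u^* a u \in C(X)$. Thus $C(X)^c$ is regular in $C(X) \rtimes_r \Gamma$. Combining this with the MASA property and invoking Theorem \ref{Pitts}(i) yields the unique pseudo-expectation property, i.e.~(\textsf{!PsE}) for $(C(X) \rtimes_r \Gamma, C(X)^c)$.

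\textbf{Faithfulness (the main obstacle).} Theorem \ref{Pitts}(iii) only gives the \emph{conditional} statement that faithfulness implies norming, so we still must prove that the unique pseudo-expectation $\Phi$ is actually faithful. The approach I would take is to build $\Phi$ explicitly, rather than rely on abstract uniqueness. Use the canonical faithful conditional expectation $E:C(X) \rtimes_r \Gamma \to C(X)$ together with the formal series representation $c = \sum_t a_t \lambda_t$ with $a_t \in C(X)$. The key observation, which makes $C(X)^c$ abelian in the first place, is that under the hypothesis on $H^x$ the summands $a_t \lambda_t$ lying in $C(X)^c$ must have $a_t$ supported in $\overline{(F_t)^\circ}$. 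Accordingly, define on a dense $*$-subalgebra the map
\[
\Phi_0\Bigl(\sum_t a_t \lambda_t\Bigr) = \sum_t a_t\, \chi_{(F_t)^\circ},
\]
where the characteristic functions are interpreted in $I(C(X)^c)$ via the embedding of bounded Borel functions on $X$ into this injective envelope. Verify it is contractive (so extends to all of $C(X) \rtimes_r \Gamma$), extends the identity on $C(X)^c$, and is completely positive because $I(C(X)^c)$ is abelian. By uniqueness it coincides with $\Phi$; and the explicit formula together with faithfulness of $E$ shows that $\Phi(c^*c)=0$ forces $a_t = 0$ in a pointwise sense on the interiors $(F_t)^\circ$, which combined with regularity of the relevant open sets (the content of the hypothesis on $H^x$) forces $c=0$.

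\textbf{The ``in particular'' clause.} If $\Gamma$ is abelian then each $H^x$ is a subgroup of $\Gamma$, hence abelian, so by the cited characterization of \cite[Theorem~6.6]{Pitts2012} the relative commutant $C(X)^c$ is abelian and the main assertion applies. The principal technical obstacle is the explicit verification that $\Phi_0$ is well-defined and completely contractive on the dense subalgebra; this is precisely where the disjointness structure encoded by the $(F_t)^\circ$ and the hypothesis that $H^x$ is abelian for every $x \in X$ enters.
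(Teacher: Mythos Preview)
The paper does not supply a proof of this proposition; it is stated as a direct citation of \cite[Theorem~6.10]{Pitts2012}. So there is no ``paper's own proof'' to compare against, and your sketch should be judged on its own merits.

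Your route to uniqueness via Theorem~\ref{Pitts} is the right idea, and the MASA step is fine. However, the regularity computation contains an error: the equality $ubu^{*}a = ub(u^{*}au)u^{*}$ requires $u^{*}a(1-uu^{*})=0$, which fails for general normalizers and even for $u=a\lambda_t$ with $a\in C(X)$ non-projection (one computes $\|u^{*}-u^{*}uu^{*}\|^{2}=\|u^{*}u(1-u^{*}u)^{2}\|$). The conclusion is nevertheless correct, by a slightly different argument: $\lambda_t$ is unitary and conjugation by a unitary preserves relative commutants, so $\lambda_t C(X)^{c}\lambda_t^{*}=C(X)^{c}$; then for $a\in C(X)\subseteq C(X)^{c}$ one has $(a\lambda_t)C(X)^{c}(a\lambda_t)^{*}=a\,C(X)^{c}\,a^{*}\subseteq C(X)^{c}$, and similarly on the other side.

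The faithfulness sketch has a more serious problem. Your proposed formula
\[
\Phi_0\Bigl(\sum_t a_t\lambda_t\Bigr)=\sum_t a_t\,\chi_{(F_t)^{\circ}}
\]
discards the $\lambda_t$'s, so it cannot restrict to the identity on $C(X)^{c}$: an element $a_t\lambda_t$ with $\operatorname{supp}(a_t)\subseteq (F_t)^{\circ}$ lies in $C(X)^{c}$ and must be sent to itself, not to $a_t$. The map you want should be of the form $\sum_t a_t\lambda_t\mapsto \sum_t (a_t\,\text{restricted to }(F_t)^{\circ})\lambda_t$, with the restriction interpreted inside $I(C(X)^{c})$; making sense of this (and of the target algebra) is where the work in \cite{Pitts2012} lies. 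An alternative, cleaner route to faithfulness is via Theorem~\ref{Pitts}(ii): once you know $(C(X)\rtimes_r\Gamma,C(X)^{c})$ is a regular MASA inclusion, $\L_\Phi$ is the unique maximal $C(X)^{c}$-disjoint ideal, so faithfulness reduces to showing that any nonzero ideal of $C(X)\rtimes_r\Gamma$ meets $C(X)^{c}$ nontrivially. Either way, this is the substantive part of the argument, and your sketch does not yet supply it.
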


In general, we do not know a characterization of when $(C(X)\rtimes_r
\Gamma, C(X)^c)$ has the faithful unique pseudo-expectation property,
or even the unique pseudo-expectation property.
However, we do have the following result for the inclusion
$(C(X)\rtimes_r\Gamma, C(X))$.

\begin{theorem} \label{crossed} If $\Gamma$ is amenable (more
generally, if $C(X) \rtimes_r \Gamma = C(X) \rtimes \Gamma$), then the
following are equivalent:
\begin{enumerate}
\item $(C(X) \rtimes_r \Gamma,C(X))$ has the unique
pseudo-expectation property.
\item $(C(X) \rtimes_r \Gamma,C(X))$ has the faithful unique
pseudo-expectation property.
\item $(C(X) \rtimes_r \Gamma,C(X))$ is essential.
\item The action of $\Gamma$ on $X$ is topologically free (i.e.,
$(F_t)^\circ = \emptyset$ for all $e \neq t \in \Gamma$).
\item $C(X)^c = C(X)$ (i.e., $C(X)$ is a MASA in $C(X) \rtimes_r
\Gamma$).
\end{enumerate}
\end{theorem}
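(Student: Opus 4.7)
The strategy is to first establish (iv) $\iff$ (v) by directly computing the relative commutant of $C(X)$ in $C(X) \rtimes_r \Gamma$, then pivot into the pseudo-expectation statements via Pitts' Theorem~\ref{Pitts} together with the canonical conditional expectation, and finally close the loop with the harder implication (iii) $\implies$ (iv), where the hypothesis $C(X) \rtimes_r \Gamma = C(X) \rtimes \Gamma$ enters in an essential way.

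For (iv) $\iff$ (v), I would use the canonical faithful conditional expectation $E$ to define Fourier coefficients $\hat{y}(s) := E(y \lambda_{s^{-1}}) \in C(X)$ for $y \in C(X) \rtimes_r \Gamma$. A short calculation using the covariance relation $\lambda_s f \lambda_s^{-1} = \alpha_s(f)$ and the fact that $E$ is a $C(X)$-bimodule map shows that $y \in C(X)^c$ iff $\hat{y}(s)\bigl(f - \alpha_{s^{-1}}(f)\bigr) = 0$ for every $s \in \Gamma$ and $f \in C(X)$, which forces $\supp(\hat{y}(s)) \subseteq F_s$. Since $\hat{y}(s)$ is continuous and $F_s$ is closed, a nonzero such coefficient at $s \neq e$ exists iff $(F_s)^\circ \neq \emptyset$, giving $C(X)^c = C(X) \iff$ topological freeness.

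For (v) $\implies$ (ii), observe that $C(X)$ is automatically regular in $C(X) \rtimes_r \Gamma$, since each $\lambda_s$ is a normalizer and $\{f \lambda_s : f \in C(X),\ s \in \Gamma\}$ has dense span; combined with (v), Pitts' Theorem~\ref{Pitts} yields a unique $\Phi \in \PsExp(C(X) \rtimes_r \Gamma, C(X))$. The composition $C(X) \rtimes_r \Gamma \xrightarrow{E} C(X) \hookrightarrow I(C(X))$ is itself a pseudo-expectation, so it must equal $\Phi$, and therefore $\Phi$ is faithful. The equivalence (i) $\iff$ (ii) is then immediate, since any unique pseudo-expectation must coincide with the faithful one induced by $E$. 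Finally, (ii) $\implies$ (iii) follows from Theorem~\ref{faithful characterization}: the faithful unique pseudo-expectation property trivially forces every pseudo-expectation to be faithful, hence hereditary essentiality, and in particular essentiality.

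The main obstacle is (iii) $\implies$ (iv), where the amenability hypothesis actually matters. I would argue the contrapositive: assuming $(F_t)^\circ \neq \emptyset$ for some $t \neq e$, construct a nonzero $C(X)$-disjoint ideal in $C(X) \rtimes_r \Gamma$. The equality $C(X) \rtimes_r \Gamma = C(X) \rtimes \Gamma$ grants access to the full crossed product's representation theory, where every covariant pair lifts to a representation of the reduced algebra. Pick $x_0 \in (F_t)^\circ$, let $\Gamma_{x_0}$ be the isotropy group (which contains $t$), and choose a unitary representation $\sigma$ of $\Gamma_{x_0}$ with $\sigma(t) \neq I$. Form the induced covariant pair $(\pi_\sigma, U_\sigma)$ on $\ell^2(\Gamma/\Gamma_{x_0}) \otimes V_\sigma$, in which $C(X)$ acts by evaluation along the orbit $\Gamma \cdot x_0$ while $U_\sigma$ is twisted by $\sigma$ so that $\lambda_t$ no longer acts as identity on the fiber over $x_0$; after first restricting to the quotient $C(\overline{\Gamma \cdot x_0}) \rtimes \Gamma$ if needed to make $\pi_\sigma$ faithful on the function algebra, $\ker(\pi_\sigma)$ is a nonzero ideal meeting $C(X)$ trivially, contradicting essentiality. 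The technical crux is verifying that this induced construction does factor through the reduced crossed product (precisely where the hypothesis is used) and that its kernel is guaranteed to be nontrivial, which comes down to the fact that $\sigma$-induced and trivially-induced representations disagree on elements supported near $x_0$ by the choice $\sigma(t) \neq I$.
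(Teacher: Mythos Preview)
Your cycle of implications matches the paper's exactly, and your arguments for (iv) $\Leftrightarrow$ (v), (v) $\Rightarrow$ (i), (i) $\Leftrightarrow$ (ii), and (ii) $\Rightarrow$ (iii) are essentially the paper's arguments (with the paper citing \cite[Prop.~6.3]{Pitts2012} for the relative-commutant computation and Theorem~\ref{Pitts} / Theorem~\ref{faithful characterization} for the rest, just as you do).

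The trouble is your sketch of (iii) $\Rightarrow$ (iv). The paper simply invokes \cite[Thm.~4.1]{KawamuraTomiyama1990}, but your proposed reconstruction of that step does not work as stated. Two concrete problems:
\begin{itemize}
\item Your ``restriction to the quotient $C(\overline{\Gamma\cdot x_0})\rtimes\Gamma$'' does not produce a $C(X)$-disjoint ideal in $C(X)\rtimes_r\Gamma$; the preimage of $\ker(\pi_\sigma)$ under the quotient map contains all functions vanishing on $\overline{\Gamma\cdot x_0}$, so it meets $C(X)$ whenever the orbit is not dense.
\item More seriously, choosing $\sigma$ with $\sigma(t)\neq I$ is the wrong direction, and the sentence ``its kernel is guaranteed to be nontrivial, which comes down to the fact that $\sigma$-induced and trivially-induced representations disagree'' is a non sequitur: two representations can disagree and both be faithful.
\end{itemize}
The argument that actually works (and is essentially what Kawamura--Tomiyama do) uses the \emph{trivial} isotropy representation. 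For each $x\in X$ let $\rho_x$ be the covariant representation on $\ell^2(\Gamma x)$ with $C(X)$ acting by evaluation along the orbit and $\Gamma$ by translation. If $0\neq f\in C(X)$ has $\operatorname{supp}(f)\subseteq (F_t)^\circ$, then $\rho_x\bigl(f(1-\lambda_t)\bigr)=0$ for every $x$ (because $t^{-1}y=y$ whenever $f(y)\neq 0$), while $\rho:=\bigoplus_{x\in X}\rho_x$ is obviously faithful on $C(X)$. Under the hypothesis $C(X)\rtimes_r\Gamma=C(X)\rtimes\Gamma$, $\rho$ extends to the reduced crossed product, so $\ker(\rho)$ is a nonzero $C(X)$-disjoint ideal, contradicting (iii). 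Your instinct to use induced representations and to locate the role of the hypothesis is right; you just need to induce from the trivial representation and take the direct sum over all of $X$, rather than from a single point with a nontrivial $\sigma$.
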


\begin{proof} (i $\implies$ ii) If $(C(X) \rtimes_r \Gamma,C(X))$ has
the unique pseudo-expectation property, then $\PsExp(C(X) \rtimes_r
\Gamma,C(X)) = \{E\}$, and $E$ is faithful.

(ii $\implies$ iii) Theorem \ref{faithful characterization}.

(iii $\implies$ iv) \cite[Thm. 4.1]{KawamuraTomiyama1990}.

(iv $\implies$ v) By \cite[Prop. 6.3]{Pitts2012} and the topological
freeness of the action of $\Gamma$ on $X$, we have that
\begin{eqnarray*} C(X)^c &=& \left\{\sum_{t \in \Gamma} a_t\lambda_t
\in C(X) \rtimes_r \Gamma: \supp(a_t) \subseteq (F_t)^\circ, ~ t \in
\Gamma\right\}\\ &=& \left\{\sum_{t \in \Gamma} a_t\lambda_t \in C(X)
\rtimes_r \Gamma: a_t = 0, ~ e \neq t \in \Gamma\right\} = C(X).
\end{eqnarray*}

(v $\implies$ i) Since $C(X)^c = C(X)$, $(C(X) \rtimes_r \Gamma,C(X))$
is a regular MASA inclusion. Thus $(C(X) \rtimes_r \Gamma,C(X))$ has
the unique pseudo-expectation property, by Pitts' Theorem \ref{Pitts}.
\end{proof}

\begin{remark} Most of the implications in Theorem \ref{crossed}
remain valid in full generality (without the assumption $C(X)
\rtimes_r \Gamma = C(X) \rtimes \Gamma$). In particular,
\[ \text{(iv)} \iff \text{(v)} \implies \text{(i)} \iff \text{(ii)} \implies
\text{(iii)}.
\]
\end{remark}

\section{$W^*$-Inclusions} \label{W^* section}

In this section we investigate the unique pseudo-expectation property
for $W^*$-inclusions $(\M,\D)$. This means that $(\M,\D)$ is a
\cstar-inclusion such that $\M$  is a $W^*$-algebra and $\D$ is
$\sigma(\M,\M_*)$-closed.

First we consider abelian $W^*$-inclusions. Corollary \ref{abelian}
above shows that there exist nontrivial abelian $C^*$-inclusions
$(C(Y),C(X))$ with the unique pseudo-expectation property. Not so for
abelian $W^*$-inclusions, due to the abundance of normal states.

\begin{theorem} \label{abelian W^*}  Let $(\M,\D)$ be a $W^*$-inclusion.
\begin{enumerate}
\item Suppose $\M$ is abelian. Then
$(\M,\D)$ has the unique pseudo-expectation property iff $\M = \D$.
\item More generally, let $(\M,\D)$ be a $W^*$-inclusion, with
  $\D$ abelian (and $\M$ possibly non-abelian). If $(\M,\D)$ has the
  unique pseudo-expectation property, then $\D$ is a MASA in $\M$.
\end{enumerate}
\end{theorem}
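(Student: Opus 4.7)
My plan is to prove (i) directly from Theorem~\ref{unique characterization}, and then to deduce (ii) by embedding any projection of $\D^c\setminus\D$ inside an abelian intermediate $W^*$-subalgebra and reducing to (i).

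For (i), I would exploit the fact that $\D$ is an abelian $W^*$-algebra and is therefore injective, so $I(\D)=\D$. For $x\in\M_{sa}$, set
\[
\ell(x):=\sup_{\D}\{d\in\D_{sa}:d\le x\},\qquad u(x):=\inf_{\D}\{d\in\D_{sa}:d\ge x\}.
\]
The key step is to check that $\ell(x)\le x$ and $u(x)\ge x$ hold in $\M$. This uses normality of the $W^*$-inclusion $\D\subseteq\M$: a bounded monotone net in $\D$ has the same supremum in $\D$ as in $\M$. Passing to the net of finite suprema of $\{d\in\D_{sa}:d\le x\}$ (each of which lies in $\D$, is below $x$, and increases to $\ell(x)$) gives $\ell(x)\le x$ in $\M$; the argument for $u(x)\ge x$ is dual. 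Granting the unique pseudo-expectation property, Theorem~\ref{unique characterization} forces $\ell(x)=u(x)$ for every $x\in\M_{sa}$, and then $\ell(x)\le x\le u(x)=\ell(x)$ squeezes $x=\ell(x)\in\D$. Hence $\M_{sa}\subseteq\D_{sa}$, so $\M=\D$.

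For (ii), I would assume $(\M,\D)$ has the unique pseudo-expectation property with $\D$ abelian, and argue by contradiction: suppose $\D\subsetneq \D^c:=\D'\cap\M$. Since $\D^c$ is a $W^*$-algebra, its self-adjoint part is norm-generated by its projections, so at least one projection $p\in\D^c$ fails to lie in $\D$. Because $p$ commutes with $\D$, the set
\[
\D_1:=\D p+\D p^\perp=\{ap+bp^\perp:a,b\in\D\}
\]
is an abelian $W^*$-subalgebra of $\M$ which strictly contains $\D$. Part (i), applied to the abelian $W^*$-inclusion $(\D_1,\D)$, yields $|\PsExp(\D_1,\D)|\ge 2$. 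Proposition~\ref{hereditary from above} (taken contrapositively) then gives $|\PsExp(\M,\D)|\ge 2$, contradicting uniqueness. Therefore $\D^c=\D$, i.e., $\D$ is a MASA in $\M$.

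The only subtle point is the order-theoretic inequality in part (i). A priori, $\ell(x)=\sup_{I(\D)}\{d\in\D_{sa}:d\le x\}$ is only guaranteed to be an upper bound for this set in $\D$, not in $\M$; the conclusion $\ell(x)\le x$ in $\M$ is exactly what the normality of the $W^*$-inclusion $\D\subseteq\M$ buys us, since it lets a monotone limit in $\D$ coincide with a monotone limit in $\M$. Once this is in hand, (i) is a one-line application of Theorem~\ref{unique characterization}, and (ii) reduces to (i) by enlarging $\D$ to absorb a single non-central projection from $\D^c$.
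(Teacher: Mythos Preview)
Your proof is correct and follows the same overall strategy as the paper: for (i), both you and the paper invoke Theorem~\ref{unique characterization} and use that $\{d\in\D_{sa}:d\le x\}$ is upward directed (thanks to $\M$ abelian); for (ii), both reduce to (i) via Proposition~\ref{hereditary from above} applied to an intermediate abelian $W^*$-algebra. The executions differ in two minor respects. In (i), you conclude $x\in\D$ by the clean order-theoretic squeeze $\ell(x)\le x\le u(x)=\ell(x)$, using that $\sigma$-weak closure of $\D$ in $\M$ makes monotone suprema agree; the paper instead shows that every normal state $\psi\in(\M_*)_+$ satisfies $\psi=\psi|_\D\circ E$ and separates points---your route is arguably more direct. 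In (ii), you build the explicit enlargement $\D_1=\D p+\D p^\perp$ from a projection $p\in\D^c\setminus\D$; the paper simply takes any MASA $\A$ with $\D\subseteq\A\subseteq\M$ and applies (i) to get $\A=\D$, which is a one-line argument avoiding the (easy but extra) verification that $\D_1$ is $\sigma$-weakly closed.
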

Observe that because $\D$ is an abelian von
Neumann algebra, the pseudo-expectations  in the theorem are
conditional expectations.

\begin{proof} (i) Suppose $\PsExp(\M,\D) = \{E\}$. Let $a \in
\M_{sa}$. Since $\M$ is abelian,
\[ \{d \in \D_{sa}: d \leq a\}
\] is an increasing net indexed by itself. Indeed, if $f, g \leq h$
are continuous functions, then $\max\{f,g\} \leq h$. By Theorem
\ref{unique characterization}, we have that
\[ E(a) = \sup_{\D}\{d \in \D_{sa}: d \leq a\}.
\] Now let $\phi \in (\D_*)_+$ and $\overline{\phi} \in (\M^*)_+$ be
an extension. Then
\[ \phi(E(a)) = \sup\{\phi(d): d \leq a\},
\] by normality. On the other hand, if $d \leq a$, then $\phi(d) =
\overline{\phi}(d) \leq \overline{\phi}(a)$, which implies $\phi(E(a))
\leq \overline{\phi}(a)$. Replacing $a$ by $-a$, we conclude that
$\phi(E(a)) = \overline{\phi}(a)$, and so $\overline{\phi} = \phi
\circ E$. Thus if $a \in \M$ and $\psi \in (\M_*)_+$, then $\psi =
\psi|_{\D} \circ E$, which implies $\psi(a) = \psi(E(a))$. Since the
choice of $\psi$ was arbitrary, $a = E(a) \in \D$.

(ii) Let $\D \subseteq \A \subseteq \M$ be a MASA. Since $(\M,\D)$ has
the unique pseudo-expectation property, so does $(\A,\D)$, by
Proposition \ref{hereditary from above}. Then $\A = \D$, by (i) above.
\end{proof}

Our next objective is to generalize Theorem \ref{abelian W^*}, by
showing that for an arbitrary $W^*$-inclusion $(\M,\D)$, the unique
pseudo-expectation property implies that $\D^c = Z(\D)$ (Corollary
\ref{W^* ARC}). Our proof relies on a nice bijective correspondence
between the conditional expectations $\D^c \to Z(\D)$ and the
conditional expectations $C^*(\D,\D^c) \to \D$ (Theorem \ref{CE
  bijection}).  Theorem~\ref{CE bijection} is related to
\cite[Thm. 5.3]{CombesDelaroche1975}, but to our knowledge, is new.
Our proof of Theorem~\ref{CE bijection} uses some fairly recent
technology, which we now describe.

Let $\M$, $\N$ be $W^*$-algebras and $\Z \subseteq Z(\M) \cap Z(\N)$
be a $W^*$-subalgebra. By \cite{Blanchard1995,GiordanoMingo1997},
there exist on the $\Z$-balanced algebraic tensor product
$\M \otimes_{\Z} \N$ both a minimal $C^*$-norm $\|\cdot\|_{\min}$ and
a maximal $C^*$-norm $\|\cdot\|_{\max}$, which coincide if either $\M$
or $\N$ is abelian.   When $\Z=\bbC$, this fact is now classical,
see~\cite[Chapter~IV.4]{Takesaki1979}.  Now suppose that for $i = 1, 2$, $\M_i$, $\N_i$
are $W^*$-algebras and $\Z \subseteq Z(\M_i) \cap Z(\N_i)$ is a
$W^*$-subalgebra. If $u:\M_1 \to \M_2$ and $v:\N_1 \to \N_2$ are
completely contractive $\Z$-bimodule maps, then the unique
$\Z$-bimodule map
$u \otimes v:(\M_1 \otimes_{\Z} \N_1,\|\cdot\|_{\min}) \to (\M_2
\otimes_{\Z} \N_2,\|\cdot\|_{\min})$
such that $(u \otimes v)(x \otimes y) = u(x) \otimes v(y)$ for all
$x \in \M_1$, $y \in \N_1$ is a contraction. Furthermore, if
$\M_1 \subseteq \M_2$ and $\N_1 \subseteq \N_2$, then
$(\M_1 \otimes_{\Z} \N_1,\|\cdot\|_{\min}) \subseteq (\M_2
\otimes_{\Z} \N_2,\|\cdot\|_{\min})$.

\begin{theorem} \label{CE bijection} Let $(\M,\D)$ be a
$W^*$-inclusion. Then the map $E \mapsto E|_{\D^c}$ is a bijective
correspondence between the conditional expectations $C^*(\D,\D^c) \to
\D$ and the conditional expectations $\D^c \to Z(\D)$. In particular,
there exists a conditional expectation $C^*(\D,\D^c) \to \D$.
\end{theorem}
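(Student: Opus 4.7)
\emph{Well-definedness and injectivity.} By Choi's lemma (Proposition~\ref{basic properties Phi}), any conditional expectation $E:C^*(\D,\D^c)\to\D$ is a $\D$-bimodule map. Given $x\in\D^c$ and $d\in\D$, the identity $d\,E(x)=E(dx)=E(xd)=E(x)\,d$ forces $E(x)\in\D\cap\D'=Z(\D)$, so that, together with $E|_{Z(\D)}=\id$, the restriction $F:=E|_{\D^c}$ is a conditional expectation $\D^c\to Z(\D)$; the map $E\mapsto E|_{\D^c}$ is thus well-defined. Because $\D$ and $\D^c$ commute in $\M$, the $*$-algebra they generate equals $\span\{dx:d\in\D,\,x\in\D^c\}$, which is norm-dense in $C^*(\D,\D^c)$; the bimodule formula $E(dx)=d\,F(x)$ then shows that $F$ alone determines $E$ on this dense subalgebra, giving injectivity.

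\emph{Surjectivity via a balanced tensor product.} Given a conditional expectation $F:\D^c\to Z(\D)$, I plan to construct the corresponding $E$ using the $Z(\D)$-balanced $C^*$-tensor product. Commutativity of $\D$ and $\D^c$ in $\M$ makes multiplication $m(d\otimes x)=dx$ into a $*$-homomorphism on the algebraic balanced tensor product; it extends to a surjective $*$-homomorphism $m:\D\otimes_{Z(\D)}\D^c\to C^*(\D,\D^c)$ on a suitable $C^*$-completion. By the quoted theorem of Blanchard and Giordano-Mingo, $\id_\D\otimes F$ is a unital completely contractive (hence ucp) $Z(\D)$-bimodule map
\[
\id_\D\otimes F:\D\otimes_{Z(\D)}^{\min}\D^c\to\D\otimes_{Z(\D)}^{\min}Z(\D)\cong\D.
\]
The target $E$ is to be the map induced on $C^*(\D,\D^c)$ from $\id_\D\otimes F$ via $m$; once the factorization is in place, $E$ is automatically ucp, and by construction $E|_\D=\id$ and $E|_{\D^c}=F$, making it a conditional expectation with the prescribed restriction.

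\emph{Main obstacle and existence.} The crux of the surjectivity step is verifying $\ker m\subseteq\ker(\id_\D\otimes F)$ in compatible tensor-norm completions. The technical wrinkle is that $m$ is continuous in the $C^*$-norm pulled back from $C^*(\D,\D^c)\subseteq\M$, while $\id_\D\otimes F$ is controlled only in the min norm, so the two completions must be reconciled. This containment genuinely uses that $F$ is a conditional expectation---not merely an arbitrary $Z(\D)$-bimodule ucp map---together with the $W^*$-structure of $\M$. A natural strategy is to test against normal functionals $\phi\in\D_*$, extend each $\phi$ to an element of $\M^*$ by Hahn-Banach, and match the induced functionals $\phi\circ(\id_\D\otimes F)$ and $\phi\circ m$ on elementary tensors $d\otimes x$, thereby forcing $(\id_\D\otimes F)(\ker m)=0$; alternatively, one can adapt \cite[Theorem~5.3]{CombesDelaroche1975}, which treats a closely related situation. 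For the final assertion, $Z(\D)$ is an abelian $W^*$-algebra, hence injective in $\OS$, so $\id_{Z(\D)}$ extends to a ucp map $\D^c\to Z(\D)$ which is automatically a conditional expectation; the surjectivity half of the bijection just established then delivers the desired conditional expectation $C^*(\D,\D^c)\to\D$.
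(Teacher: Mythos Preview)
Your well-definedness and injectivity arguments are correct and match the paper's. Your surjectivity framework---the $Z(\D)$-balanced tensor product and the map $\id_\D\otimes F$---is also the paper's framework. But you explicitly leave the key step unfinished: the paragraph headed ``Main obstacle'' is an acknowledgment of a gap, not a proof. The Hahn--Banach sketch you offer (extend $\phi\in\D_*$ to $\M^*$ and compare functionals on elementary tensors) does not obviously control what happens on \emph{limits} of elementary tensors in the relevant completion, which is exactly where the kernel question lives; and ``adapt \cite[Thm.~5.3]{CombesDelaroche1975}'' is a pointer, not an argument.

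The paper sidesteps your obstacle entirely by citing \cite[Thm.~5.5.4]{KadisonRingrose1983}: the multiplication map $m$ is a $*$-\emph{isomorphism} on the \emph{algebraic} balanced tensor product $\D\otimes_{Z(\D)}\D^c$ onto $\Alg(\D,\D^c)$. Consequently the pullback of the ambient norm from $\M$ is a genuine $C^*$-norm on the algebraic tensor product, so it is sandwiched between $\|\cdot\|_{\min}$ and $\|\cdot\|_{\max}$. Since $Z(\D)$ is abelian, all $C^*$-norms on $\D\otimes_{Z(\D)}Z(\D)$ coincide, and one reads off directly
\[
\Bigl\|\sum_i d_i\,F(d_i')\Bigr\|
=\Bigl\|\sum_i d_i\otimes F(d_i')\Bigr\|_{\min}
\le\Bigl\|\sum_i d_i\otimes d_i'\Bigr\|_{\min}
\le\Bigl\|\sum_i d_i d_i'\Bigr\|.
\]
This single chain of inequalities shows that $\sum_i d_i d_i'\mapsto\sum_i d_i F(d_i')$ is well-defined and contractive on $\Alg(\D,\D^c)$, hence extends to the desired conditional expectation on $C^*(\D,\D^c)$. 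No kernel-containment argument is needed, because on the completion that matters (the one isometric to $C^*(\D,\D^c)$) the map $m$ has no kernel at all. The missing ingredient in your write-up is precisely this algebraic injectivity of $m$.
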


\begin{proof} Let $E:C^*(\D,\D^c) \to \D$ be a conditional
expectation. For all $d' \in \D^c$ and $d \in \D$, we have that
\[ dE(d') = E(dd') = E(d'd) = E(d')d,
\] which implies $E(d') \in Z(\D)$. Conversely, suppose $\theta:\D^c
\to Z(\D)$ is a conditional expectation. By
\cite[Thm. 5.5.4]{KadisonRingrose1983},
\[ \D \otimes_{Z(\D)} \D^c \to \Alg(\D,\D^c) \subseteq \M:
\sum_{i=1}^n d_i \otimes d_i' \mapsto \sum_{i=1}^n d_id_i'
\] is a $*$-isomorphism. Thus (with the notation of the previous
paragraph)
\[ \|\sum_{i=1}^n d_i \otimes d_i'\|_{\min} \leq \|\sum_{i=1}^n d_id_i'\|
\leq \|\sum_{i=1}^n d_i \otimes d_i'\|_{\max}, ~ \sum_{i=1}^n d_i \otimes
d_i' \in \D \otimes_{Z(\D)} \D^c.
\] Furthermore, since $Z(\D)$ is abelian,
\[ \|\sum_{i=1}^n d_i \otimes z_i\|_{\min} = \|\sum_{i=1}^n d_iz_i\|, ~
\sum_{i=1}^n d_i \otimes z_i \in \D \otimes_{Z(\D)} Z(\D).
\] Thus for all $\sum_{i=1}^n d_i \otimes d_i' \in \D \otimes_{Z(\D)}
\D^c$,
\[ \|\sum_{i=1}^n d_i\theta(d_i')\| = \|\sum_{i=1}^n d_i \otimes
\theta(d_i')\|_{\min} \leq \|\sum_{i=1}^n d_i \otimes d_i'\|_{\min} \leq
\|\sum_{i=1}^n d_id_i'\|.
\] It follows that the map
\[ \Alg(\D,\D^c) \to \Alg(\D,Z(\D)) = \D: \sum_{i=1}^n d_id_i' \mapsto
\sum_{i=1}^n d_i\theta(d_i')
\] extends uniquely to a conditional expectation $\Theta:C^*(\D,\D^c)
\to \D$. Clearly the maps $E \mapsto E|_{\D^c}$ and $\theta \mapsto
\Theta$ described above are inverse to one another.
\end{proof}

Recall that in the case of a \cstar-inclusion,  the
\underline{faithful} unique pseudo-expectation property implies that $\D^c$ is
abelian, but we do not know whether the faithfulness assumption can be
dropped.  However, the following corollary to Theorem~\ref{CE
  bijection} shows that in the $W^*$-case, faithfulness is not
necessary to conclude $\D^c$ is abelian.  In fact, more is true.
\begin{theorem} \label{W^* ARC} Let $(\M,\D)$ be a
$W^*$-inclusion. If $(\M,\D)$ has the unique pseudo-expectation
property, then $\D^c = Z(\D)$.
\end{theorem}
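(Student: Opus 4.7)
The strategy is to localize to the relative commutant by passing to the intermediate inclusion $(C^*(\D,\D^c),\D)$, extract via Theorem \ref{CE bijection} the unique pseudo-expectation property for $(\D^c, Z(\D))$, and then invoke Theorem \ref{abelian W^*}(ii) to force $Z(\D)$ to be a MASA in $\D^c$; a short commutation argument then upgrades this to the desired equality $\D^c = Z(\D)$.

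First, since the unique pseudo-expectation property is hereditary from above (Proposition \ref{hereditary from above}), the intermediate inclusion $(C^*(\D,\D^c),\D)$ inherits it from $(\M,\D)$. By Theorem \ref{CE bijection} there exists a conditional expectation $E: C^*(\D,\D^c) \to \D$, and since every conditional expectation is a pseudo-expectation, $E$ is forced to be the unique pseudo-expectation; in particular $E$ is the unique conditional expectation $C^*(\D,\D^c) \to \D$. The bijection of Theorem \ref{CE bijection}, running in the reverse direction, then guarantees that there is a unique conditional expectation $\theta: \D^c \to Z(\D)$.

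Next, observe that $Z(\D)$ is an abelian $W^*$-algebra, hence injective in $\OS$, so $I(Z(\D)) = Z(\D)$. Consequently, pseudo-expectations for the $W^*$-inclusion $(\D^c, Z(\D))$ coincide with conditional expectations $\D^c \to Z(\D)$, and uniqueness of $\theta$ translates to the unique pseudo-expectation property for $(\D^c, Z(\D))$. Theorem \ref{abelian W^*}(ii), applied to this inclusion with $Z(\D)$ abelian, yields that $Z(\D)$ is a MASA in $\D^c$, i.e., $Z(\D)' \cap \D^c = Z(\D)$. To finish, any $a \in \D^c = \D' \cap \M$ commutes with all of $\D$, and hence in particular with $Z(\D) \subseteq \D$; thus $a \in Z(\D)' \cap \D^c = Z(\D)$. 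Combined with the automatic inclusion $Z(\D) \subseteq \D \cap \D' \subseteq \D^c$, this gives $\D^c = Z(\D)$.

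The subtlest step is the promotion of uniqueness of $\theta$ to the unique pseudo-expectation property for $(\D^c, Z(\D))$; this relies crucially on the injectivity of the abelian $W^*$-algebra $Z(\D)$, and is precisely what allows us to invoke Theorem \ref{abelian W^*}(ii) without any faithfulness hypothesis (in contrast to Corollary \ref{!fPse implies ARC}, where abelianness of $\D^c$ in the general $C^*$ setting was obtained only under faithfulness).
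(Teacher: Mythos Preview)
Your proof is correct and follows essentially the same route as the paper's: pass to $(C^*(\D,\D^c),\D)$ via heredity, use Theorem~\ref{CE bijection} to obtain a unique conditional expectation $\D^c \to Z(\D)$, then invoke Theorem~\ref{abelian W^*} to conclude $Z(\D)' \cap \D^c = Z(\D)$, and finish with the commutation argument. The only difference is cosmetic: you spell out explicitly that injectivity of $Z(\D)$ identifies pseudo-expectations with conditional expectations for $(\D^c,Z(\D))$ (the paper leaves this implicit), and the paper compresses the final step into the single chain $Z(\D)' \cap \D^c = Z(\D)' \cap \D' \cap \M = \D' \cap \M = \D^c$.
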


\begin{proof} If $(\M,\D)$ has the unique pseudo-expectation property,
then so does $(C^*(\D,\D^c),\D)$, by Proposition \ref{hereditary from
above}. By Theorem \ref{CE bijection}, there exists a unique
conditional expectation $C^*(\D,\D^c) \to \D$, therefore a unique
conditional expectation $\D^c \to Z(\D)$. By Theorem \ref{abelian
W^*}, $Z(\D)' \cap \D^c = Z(\D)$. But
\[ Z(\D)' \cap \D^c = Z(\D)' \cap \D' \cap \M = \D' \cap \M = \D^c.
\]
\end{proof}

We now turn to our main purpose in
this section---characterizing the unique pseudo-expectation property
for various classes of $W^*$-inclusions (Theorems~\ref{BH-inclusions}
and \ref{W^*-inclusions}).

The statement of Theorem \ref{W^*-inclusions} involves the tracial
ultrapower construction, which we recall for the reader. Let $\M$ be a
$II_1$ factor with trace $\tau$, and let $\omega \in \beta\bbN
\backslash \bbN$ be a free ultrafilter. The \emph{tracial ultrapower}
of $\M$ with respect to $\omega$ is defined to be $\M^\omega =
\ell^\infty(\M)/\I_\omega$, where
\[ \I_\omega = \{(x_n) \in \ell^\infty(\M): \lim_\omega \tau(x_n^*x_n)
= 0\} \ideal \ell^\infty(\M).
\] It can be shown that $\M^\omega$ itself is a $II_1$ factor with
trace
\[ \tau_\omega((x_n) + \I_\omega) = \lim_\omega \tau(x_n).
\] The map $\M \to \M^\omega: x \mapsto (x) + \I_\omega$ is an
embedding. If $\D \subseteq \M$ is a MASA, then $\D^\omega =
(\ell^\infty(\D) + \I_\omega)/\I_\omega \subseteq \M^\omega$ is a
MASA. See \cite[Appendix A]{SinclairSmith2008} for more details.

The proofs of Theorems~\ref{BH-inclusions} and \ref{W^*-inclusions}
require some standard facts about conditional expectations, which we
collect into a proposition for the reader's convenience.

\begin{proposition} \label{CE facts} \hfill
\begin{enumerate}
\item Let $I$ be an index set and for $i \in I$, let $\M_i \subseteq B(\H_i)$ be a
$W^*$-algebra. Then there exists a bijective correspondence between
families of conditional expectations $\{E_i:B(\H_i) \to \M_i\}_{i \in
I}$ and conditional expectations $\theta:B(\bigoplus_{i \in I} \H_i)
\to \bigoplus_{i \in I} \M_i$. Namely
    \[ \theta(\begin{bmatrix} x_{ij} \end{bmatrix}) = \oplus_{i \in I}
E_i(x_{ii}).
    \] We have that $\theta$ is normal (resp. faithful) iff every
$E_i$ is normal (resp. faithful).
\item For $i = 1, 2$, let $(\M_i,\D_i)$ be a $W^*$-inclusion and
$E_i:\M_i \to \D_i$ be a conditional expectation. Then there exists a
conditional expectation $E:\M_1 \overline{\otimes} \M_2 \to \D_1
\overline{\otimes} \D_2$ such that $E(x_1 \otimes x_2) = E_1(x_1)
\otimes E_2(x_2)$ for all $x_1 \in \M_1$, $x_2 \in \M_2$. If $E_1$ and
$E_2$ are normal, then there exists a unique normal conditional
expectation $E$ as above. \cite[Thm. 4]{Tomiyama1969}
\item Let $(\M,\D)$ be a $W^*$-inclusion. Then there exists a
bijective correspondence between conditional expectations $E:\M \to
\D$ and conditional expectations $\theta:B(\K) \overline{\otimes} \M
\to B(\K) \overline{\otimes} \D$. Namely
    \[ \theta(\begin{bmatrix} x_{ij} \end{bmatrix}) = \begin{bmatrix}
E(x_{ij}) \end{bmatrix}.
    \] We have that $E$ is normal (resp. faithful) iff $\theta$ is
normal (resp. faithful).
\end{enumerate}
\end{proposition}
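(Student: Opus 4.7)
The plan is to handle the three items separately. Part~(ii) is a direct citation of Tomiyama's theorem \cite[Thm.~4]{Tomiyama1969}, and requires no new argument. Parts~(i) and~(iii) follow parallel strategies: one direction constructs the compound conditional expectation by applying the given map(s) entrywise, while the reverse direction exploits the bimodule property of conditional expectations (Proposition~\ref{basic properties Phi}, applied via Proposition~\ref{basic properties PsExp}(ii)) together with a commuting subalgebra sitting inside the range algebra---the central projections in (i), and $B(\K)\otimes 1$ in (iii).

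For (i), the forward direction is routine: given $\{E_i\}_{i\in I}$, the formula $\theta(\begin{bmatrix} x_{ij}\end{bmatrix}) := \bigoplus_i E_i(x_{ii})$ is plainly ucp and restricts to the identity on $\bigoplus_i \M_i$, so it is a conditional expectation. For the reverse direction, let $p_i \in \bigoplus_i \M_i$ be the central projection corresponding to the $i$-th summand. Since $p_i$ lies in the range algebra of $\theta$, the bimodule property gives $\theta(p_i x p_j) = p_i \theta(x) p_j$, which vanishes for $i \neq j$ because $p_i(\bigoplus_k \M_k) p_j = 0$. Thus $\theta$ is block-diagonal: identifying $p_i B(\bigoplus_k \H_k) p_i$ with $B(\H_i)$ and $p_i (\bigoplus_k \M_k) p_i$ with $\M_i$, the compression $E_i(y) := p_i \theta(p_i y p_i) p_i$ is a conditional expectation $B(\H_i) \to \M_i$. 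The two constructions are easily seen to be inverse, and normality and faithfulness transfer summand-by-summand because the $p_i$ are central in the range algebra, so testing $\theta$ against them reduces both properties to the individual $E_i$.

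For (iii), the forward direction is immediate from (ii) applied with $\M_1 = \D_1 = B(\K)$ and $E_1 = \id$: this yields $\theta$ with $\theta(a \otimes x) = a \otimes E(x)$, which is $E$ applied entrywise. For the reverse direction, given $\theta$, observe that $\theta$ is a $B(\K) \overline{\otimes} \D$-bimodule map, so for $x \in \M$ the element $\theta(1 \otimes x)$ commutes with $B(\K) \otimes 1_\D$. By the commutation theorem for von Neumann tensor products, $(\M \overline{\otimes} \N)' = \M' \overline{\otimes} \N'$, one has $(B(\K)\otimes 1)' \cap (B(\K) \overline{\otimes} \D) = 1 \otimes \D$, so $\theta(1 \otimes x) = 1 \otimes E(x)$ for a well-determined $E(x) \in \D$. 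The map $E:\M \to \D$ inherits unitality and complete positivity from $\theta$, and restricts to the identity on $\D$ because $\theta$ fixes $1 \otimes \D \subseteq B(\K)\overline{\otimes}\D$. That the two operations are mutually inverse reduces, via the bimodule property, to checking agreement on generators of the form $(a \otimes d_1)(1 \otimes x)(b \otimes d_2)$.

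I expect the main obstacles to be the operator-theoretic details arising when the index set or $\K$ is infinite-dimensional. In (i), one must verify that the $p_i$ genuinely lie in the appropriate $\ell^\infty$-direct sum (so that the bimodule property applies) and piece the summandwise data back together in the correct topology. The crux of (iii) is the relative commutant computation $(B(\K)\otimes 1)' \cap (B(\K)\overline{\otimes} \D) = 1 \otimes \D$ and, downstream of it, reducing agreement of two ucp maps on all of $B(\K)\overline{\otimes}\M$ to agreement on the algebraic bimodule generated by $1 \otimes \M$---care is needed here if one does not wish to tacitly assume that $\theta$ is normal.
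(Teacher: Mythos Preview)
The paper gives no proof of this proposition; it is introduced as ``some standard facts about conditional expectations, which we collect into a proposition for the reader's convenience,'' with (ii) attributed directly to Tomiyama. Your outline is a correct way to supply the omitted arguments, and the mechanisms you single out---the bimodule property over the range algebra combined with a commuting subalgebra already inside that range (the central summand projections in (i), $B(\K)\otimes 1$ in (iii))---are exactly the right ones.

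Your flagged concern in (iii) about avoiding a tacit normality assumption is legitimate but easily dispatched. Once you have $\theta(1\otimes x)=1\otimes E(x)$, the bimodule property gives $\theta(e_{ij}\otimes x)=e_{ij}\otimes E(x)$ for all matrix units $e_{ij}\in B(\K)$. For arbitrary $T\in B(\K)\overline{\otimes}\M$ and any finite-rank projection $p\in B(\K)$, one has $(p\otimes 1)\theta(T)(p\otimes 1)=\theta\bigl((p\otimes 1)T(p\otimes 1)\bigr)$, and the right side lives in a finite matrix corner where it is determined entrywise by $E$. Since $(p\otimes 1)S(p\otimes 1)\to S$ strongly for any bounded $S$ as $p\uparrow 1$, this pins down $\theta(T)$ without invoking normality of $\theta$. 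The same device with the finite partial sums $q_F=\sum_{i\in F}p_i$ resolves the infinite-index-set issue you raise for (i).
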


Now we come to the main results of this section.  The first
characterizes the unique pseudo-expectation property for
$W^*$-inclusions of the form $(\B(\H),\D)$, and the second characterizes
the unique pseudo-expectation property for $W^*$-inclusions
$(\M,\D)$ when $\M_*$ is separable and $\D$ is abelian. In the
latter result, the separability hypothesis cannot be removed.

\begin{theorem}\label{BH-inclusions}\hfill
\begin{enumerate}
\item Let $\A \subseteq B(\H)$ be an abelian $W^*$-algebra. Then
$(B(\H),\A)$ has the unique pseudo-expectation property iff $\A$ is an
atomic MASA. The unique pseudo-expectation is a normal faithful
conditional expectation.
\item Generalizing (i), let $\M \subseteq B(\H)$ be a
$W^*$-algebra. Then $(B(\H),\M)$ has the unique pseudo-expectation
property iff $\M'$ is abelian and atomic. The unique
pseudo-expectation is a normal faithful conditional expectation. In
particular, $\M$ is type $I$ (and injective).
\end{enumerate}
\end{theorem}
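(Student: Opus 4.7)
The plan is to reduce both parts to the uniqueness of conditional expectations, and then dispatch the reduction through the classical Kadison--Singer analysis together with the tensor apparatus of Proposition~\ref{CE facts}. In part (i), the reduction is immediate: an abelian $W^*$-algebra $\A$ is injective, so $I(\A)=\A$ and every pseudo-expectation is a conditional expectation. In part (ii), I would first invoke Theorem~\ref{W^* ARC} to conclude that uniqueness forces $\M'=Z(\M)$; hence $\M$ has abelian commutant on $\H$ and is a type I $W^*$-algebra. Since type I $W^*$-algebras are injective, again $I(\M)=\M$ and every pseudo-expectation is a conditional expectation.

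For (i), the forward direction goes as follows. Because $\A$ is a MASA, a minimal projection $e\in\A$ satisfies $e\A e=\bbC e$, and this is a MASA in $B(e\H)$; this forces $e\H$ to be one-dimensional, so $\A\cong \ell^\infty(I)$ acts diagonally on $\H\cong \ell^2(I)$. Any conditional expectation $E:B(\H)\to\A$ is an $\A$-bimodule map by Proposition~\ref{basic properties Phi}, hence $E(e_{ij})=\delta_{ij}e_i$ on matrix units. Compressing by finite sums $p_F=\sum_{i\in F}e_i$ gives $p_F E(x)p_F=\sum_{i\in F}\langle x e_i,e_i\rangle e_i$, and letting $F\uparrow I$ forces $E$ to be the normal faithful diagonal expectation of Example~\ref{atomic MASA}. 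For the converse, Theorem~\ref{abelian W^*}(ii) already ensures $\A$ is a MASA; if $\A$ is not purely atomic, decompose $\A=\A_{\mathrm{at}}\oplus \A_{\mathrm{diff}}$ and let $p$ be the support of the diffuse summand. Example~\ref{diffuse MASA} supplies distinct conditional expectations $F_1,F_2:B(p\H)\to\A p$, each of which glues with the unique conditional expectation on the atomic corner to give distinct conditional expectations on $(B(\H),\A)$; the gluing is a genuine conditional expectation because $p\in\A$ forces the off-diagonal blocks $pB(\H)p^\perp$ to be annihilated by the bimodule property.

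For (ii), I would spatially decompose $\M$ via its type I structure. Because $\M'=Z(\M)$ is abelian, there is no multiplicity in the commutant direction, so the classification of type I $W^*$-algebras with abelian commutant yields
\[
    \M \;\cong\; \bigoplus_n M_n(\bbC)\,\overline{\otimes}\,L^\infty(X_n,\mu_n) \quad\text{acting on}\quad \H \;\cong\; \bigoplus_n \bbC^n\otimes L^2(X_n,\mu_n),
\]
with $Z(\M)=\bigoplus_n L^\infty(X_n,\mu_n)$. Proposition~\ref{CE facts}(i) then identifies conditional expectations $B(\H)\to\M$ with families indexed by $n$, and Proposition~\ref{CE facts}(iii) identifies each family member with a conditional expectation $B(L^2(X_n,\mu_n))\to L^\infty(X_n,\mu_n)$. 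By part (i), such a family is unique if and only if every $L^\infty(X_n,\mu_n)$ is atomic, i.e., if and only if $Z(\M)=\M'$ is atomic; normality and faithfulness of the unique conditional expectation then also transfer through Proposition~\ref{CE facts}.

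The main obstacle I foresee is the spatial type I structure theorem invoked in (ii)---in particular, carefully verifying that the hypothesis $\M'=Z(\M)$ really rules out any extra amplification in the representation of $\M$ on $\H$, so that the tensor/direct-sum form displayed above is genuinely available. Once this is secured, the remaining work is bookkeeping through Proposition~\ref{CE facts}. A secondary point requiring care is the ``extend and glue'' step in the backward direction of (i), where one must verify that gluing along the central projection of the diffuse summand really produces a unital completely positive map that lands in $\A$; this is a routine but easy-to-misstate consequence of $p\in\A$ and the bimodule property.
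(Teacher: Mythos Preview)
Your strategy coincides with the paper's: reduce to conditional expectations via injectivity (abelian $\A$, respectively type~I $\M$), invoke Theorem~\ref{W^* ARC} in part~(ii) to force $\M'=Z(\M)$, decompose spatially, and use Proposition~\ref{CE facts} to reduce part~(ii) to part~(i). The paper proceeds exactly this way.

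There is, however, one genuine gap in your execution of the converse of~(i). You invoke Example~\ref{diffuse MASA} to supply distinct conditional expectations $B(p\H)\to\A p$ for the diffuse summand, but that example treats only $(B(L^2[0,1]),L^\infty[0,1])$. When $\H$ is non-separable, the diffuse MASA $\A p\subseteq B(p\H)$ need not be spatially isomorphic to $L^\infty[0,1]$ on $L^2[0,1]$. The paper closes this gap by appealing to Maharam's classification to write the diffuse part as $\bigoplus_i L^\infty([0,1]^{\alpha_i})$ acting on $\bigoplus_i L^2([0,1]^{\alpha_i})$, and then using the tensor factorization $L^\infty([0,1]^\alpha)=L^\infty[0,1]\,\overline{\otimes}\,L^\infty([0,1]^\beta)$ together with Proposition~\ref{CE facts}(ii) to manufacture distinct expectations from the $\alpha=1$ case. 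Your ``glue along the central projection'' idea is correct, but you need this extra step to actually produce the two expectations on the diffuse corner in full generality.

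A related caution in~(ii): your displayed decomposition writes $M_n(\bbC)\,\overline{\otimes}\,L^\infty(X_n,\mu_n)$, which reads as finite type~$I_n$ summands only. The paper's version $\M=\bigoplus_m B(\ell_m^2)\,\overline{\otimes}\,\A_m$ with $\A_m\subseteq B(\H_m)$ a MASA allows $m$ to be any cardinal, which is needed since nothing in the hypothesis rules out a type~$I_\infty$ summand. Once you allow this, the reduction via Proposition~\ref{CE facts}(i) and~(iii) goes through exactly as you describe, and your flagged concern about ruling out extra amplification is indeed handled by $\M'=Z(\M)$.
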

\begin{proof}
(i) By Theorem \ref{abelian W^*}, we may assume that $\A
\subseteq B(\H)$ is a MASA. We have the unitary equivalence
\[ \A = \A_{\text{atomic}} \oplus \A_{\text{diffuse}},
\] where $\A_{\text{atomic}}$ is spatially isomorphic to
$\ell^\infty(\kappa)$ acting on $\ell^2(\kappa)$ for some index set
$\kappa$, and $\A_{\text{diffuse}}$ is spatially isomorphic to
$\oplus_{i \in I} L^\infty([0,1]^{\alpha_i})$ acting on
$\oplus_{i \in I} L^2([0,1]^{\alpha_i})$ for some index set $I$ and
cardinals $\alpha_i$, $i \in I$ (see~\cite{MaharamOnHoMeAl}). It is
easy to see that for $\kappa$ nonempty, there exists a unique
conditional expectation $B(\ell^2(\kappa)) \to \ell^\infty(\kappa)$,
which is normal and faithful. On the other hand, for any nonzero
cardinal $\alpha$, there are multiple conditional expectations
$B(L^2([0,1]^\alpha)) \to L^\infty([0,1]^\alpha)$. Indeed, this is
well-known when $\alpha = 1$ (Example \ref{diffuse MASA}), and follows
from Proposition \ref{CE facts} (ii) and the unitary equivalence
\[ L^\infty([0,1]^\alpha) = L^\infty([0,1]) \overline{\otimes}
L^\infty([0,1]^\beta) \subseteq B(L^2([0,1])) \overline{\otimes}
B(L^2([0,1]^\beta))
\] when $\alpha > 1$ (here $\beta = \alpha - 1$ if $\alpha$ is finite,
and $\beta = \alpha$ if $\alpha$ is infinite). The result now follows
from Proposition \ref{CE facts} (i).

(ii) By Theorem \ref{W^* ARC}, we may assume that $\M' = Z(\M)$, so
that
\[ \H = \bigoplus_m \ell_m^2 \otimes \H_m, ~ \M' = \bigoplus_m I_m
\overline{\otimes} \A_m, \text{ and } \M = \bigoplus_m B(\ell_m^2)
\overline{\otimes} \A_m,
\] where $\A_m \subseteq B(\H_m)$ is a MASA for each $m$. In particular, $\M$ is injective, so that pseudo-expectations for $(B(\H),\M)$ are actually conditional expectations. By Proposition \ref{CE facts} (i) and
(iii), there exists a unique conditional expectation $B(\H) \to \M$
iff there exist unique conditional expectations $B(\H_m) \to \A_m$ for
each $m$, iff $\A_m$ is atomic for each $m$ (by part (i) above). The
result now follows.
\end{proof}

\begin{theorem} \label{W^*-inclusions}\hfill
\begin{enumerate}
\item Let $(\M,\D)$ be a $W^*$-inclusion, with $\M_*$ separable
and $\D$ abelian. Then $(\M,\D)$ has the unique pseudo-expectation
property iff $\M$ is type $I$, $\D$ is a MASA, and there exists a
family $\{p_t\}$ of abelian
projections for $\M$ such that $\{p_t\} \subseteq \D$ and $\sum_t p_t = 1$. The
unique pseudo-expectation is a normal faithful conditional
expectation.
\item Let $(\M,\D)$ be a $W^*$-inclusion, with $\M$ a $II_1$
factor and $\D$ a singular MASA. If $\omega \in \beta\bbN \backslash
\bbN$, then $(\M^\omega,\D^\omega)$ has the faithful unique pseudo-expectation
property. The unique pseudo-expectation is a normal,
trace-preserving conditional expectation.
\end{enumerate}
\end{theorem}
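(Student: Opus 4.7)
Throughout, note that an abelian $W^*$-algebra is injective in $\OS$, so $I(\D) = \D$ in part (i) and $I(\D^\omega) = \D^\omega$ in part (ii); pseudo-expectations here are therefore just conditional expectations into the target algebra.

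For the reverse direction of (i), suppose $\M$ is type $I$ with $\{p_t\}\subseteq \D$ abelian projections for $\M$ summing to $1$.  Since $\D$ is a MASA, the reduction $\D p_t$ is a MASA in the (abelian) corner $p_t\M p_t$, forcing $p_t\M p_t = \D p_t$, so that $E(x) := \mathrm{SOT}\text{-}\sum_t p_t x p_t$ is a well-defined normal faithful conditional expectation $\M\to\D$.  For uniqueness, let $\Phi\in\PsExp(\M,\D)$.  The $\D$-bimodule property together with abelianness of $\D$ gives
\[
\Phi(p_s x p_t) = p_s\Phi(x)p_t = \delta_{s,t}\, p_s\Phi(x),
\]
while for $s=t$ we also have $p_s x p_s\in\D$ and hence $p_s\Phi(x) = p_s x p_s$.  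Summing over a finite $F\subseteq T$ with $p_F=\sum_{s\in F}p_s$ yields $p_F\Phi(x) = \sum_{s\in F} p_s x p_s$, and letting $F\uparrow T$ forces $\Phi(x) = E(x)$ by SOT-convergence occurring entirely inside the fixed algebra $\D$.  Crucially, no normality of $\Phi$ is needed.

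For the forward direction of (i), by Theorem \ref{abelian W^*}(ii) $\D$ is a MASA and hence $Z(\M)\subseteq\D$.  Separability of $\M_*$ allows a central direct-integral decomposition $\M = \int^{\oplus}\M_x\,d\mu(x)$, $\D = \int^{\oplus}\D_x\,d\mu(x)$, with $\D_x$ a MASA in the factor $\M_x$ for a.e.\ $x$.  A standard disintegration argument (using that any pseudo-expectation is a $Z(\M)$-bimodule map) transfers the unique pseudo-expectation property to $(\M_x,\D_x)$ almost everywhere.  The Akemann--Sherman example excludes $\M_x$ of type $II_1$, while type $II_\infty$ and type $III$ cases are ruled out by tensor-product arguments analogous to those in Theorem \ref{BH-inclusions}(ii) combined with Proposition \ref{CE facts}.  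Hence $\M_x = B(\H_x)$ a.e., and Theorem \ref{BH-inclusions}(i) forces each $\D_x$ to be atomic; measurable selection of the atoms globalizes to the required family $\{p_t\}\subseteq\D$, and $\M$ is type $I$.

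For (ii), the key external input is Popa's characterization: for a MASA $\D$ in a $II_1$ factor $\M$ with separable predual, $\D$ is singular iff $(\D^\omega)'\cap\M^\omega = \D^\omega$; in particular $\D^\omega$ is a MASA in the $II_1$ factor $\M^\omega$.  The trace-preserving normal conditional expectation $E^\omega:\M^\omega\to\D^\omega$ is then faithful and is the candidate for the unique pseudo-expectation.  Given any $\Phi\in\PsExp(\M^\omega,\D^\omega)$ and $x\in\M^\omega$ with $E^\omega(x)=0$, the plan is to exploit Popa's weak-mixing characterization of singularity: sequences $(u_n)\subseteq U(\D)$ can be found whose ultraproduct $u=(u_n)+\I_\omega\in U(\D^\omega)$ averages $x$ sufficiently well, and when combined with the identity $\Phi(u x u^*) = u\Phi(x)u^* = \Phi(x)$ (automatic from abelianness of $\D^\omega$), this should force $\Phi(x) = 0$.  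I anticipate the main obstacle to be converting Popa's SOT-type averaging—which classically exploits normality—into an argument robust enough for a potentially non-normal $\Phi$; this is precisely where the ultrafilter-rich structure of $\M^\omega$, rather than the original $(\M,\D)$, must pull its weight.
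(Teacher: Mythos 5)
Your reverse direction of (i) is correct and is actually more self-contained than the paper's: the corner identity $p_t\M p_t=\D p_t$ plus the $\D$-bimodule property of any pseudo-expectation cleanly reproves the uniqueness half of the Akemann--Sherman result that the paper simply cites, and your observation that no normality of $\Phi$ is needed is exactly right. The forward direction of (i), however, has real gaps. The paper's route is short: $\D$ is a MASA by Theorem \ref{abelian W^*}, hence singly generated since $\M_*$ is separable; uniqueness then forces the expectation to be normal and faithful (Akemann--Sherman, Cor.~3.3); $\M$ is type $I$ by Popa--Vaes; and the abelian projections in $\D$ come from Akemann--Sherman, Thm.~4.1. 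Your direct-integral substitute leaves precisely the hard steps unproven: (a) transferring the unique pseudo-expectation property to almost every fiber requires a measurable-selection argument that assembles two distinct global expectations from fiberwise non-uniqueness on a positive-measure set -- this is not ``standard'' and is nowhere supplied; (b) excluding type $II_\infty$ and type $III$ fibers ``by tensor-product arguments analogous to Theorem \ref{BH-inclusions}(ii)'' does not work as stated, since a MASA in $N\overline{\otimes}B(\ell^2)$ need not split as a tensor product of MASAs, so Proposition \ref{CE facts} does not apply; this exclusion is exactly the content of the Popa--Vaes theorem the paper invokes, and you have not replaced it.

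Part (ii) is where the proposal genuinely fails, and for the reason you yourself flag. A pseudo-expectation is only norm-continuous, and for $x$ with $E^\omega(x)=0$ the unitary averages $\frac1n\sum_k u_kxu_k^*$ over $U(\D^\omega)$ tend to $0$ only in the $2$-norm/strong topology, never in norm in general; so the identity $\Phi(uxu^*)=\Phi(x)$ cannot be combined with weak-mixing to force $\Phi(x)=0$. (If that argument worked, it would apply verbatim to $(\M,\D)$ itself and contradict Akemann--Sherman's Thm.~4.4, quoted in the paper, that a singular MASA in a $II_1$ factor with separable predual never has a unique pseudo-expectation.) The paper's actual proof invokes Popa's theorem that $(\M^\omega,\D^\omega)$ has the unique \emph{pure-state} extension property, and then Example \ref{UEP MASA}, whose engine is the Archbold--Bunce--Gregson \emph{norm} decomposition $\C=\D+\overline{\span}[\C,\D]$; being a norm statement, it applies to arbitrary, possibly non-normal, ucp maps and kills any pseudo-expectation on the commutator part. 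The role of the ultrapower is to upgrade singularity of $\D$ to \textsf{UEP} for $\D^\omega$, not to make an averaging argument converge in norm, and your plan as written does not reach a proof.
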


\begin{proof} (i) ($\Rightarrow$) Suppose there exists a unique
expectation $E:\M \to \D$. Then $\D$ is a MASA, by Theorem
\ref{abelian W^*}. Since $\M_*$ is separable, $\D$ is
singly-generated. Then $E$ is normal and faithful, by
\cite[Cor. 3.3]{AkemannSherman2012}. It follows that $\M$ is type $I$,
by \cite[Thm. 3.3]{PopaVaes2015}. Thus there exist abelian projections $\{p_t\}$
for $\M$ such that $\{p_t\} \subseteq \D$ and $\sum_t p_t = 1$, by
\cite[Thm. 4.1]{AkemannSherman2012}.

($\Leftarrow$) Conversely, if $\M$ is type $I$, $\D$ is a MASA, and
there exist abelian projections $\{p_t\}$ for $\M$ such that $\{p_t\} \subseteq \D$
and $\sum_t p_t = 1$, then there exists a unique conditional expectation
$E:\M \to \D$, by \cite[Thm. 4.1]{AkemannSherman2012}.

(ii) By \cite[Thm 0.1]{Popa2014}, $(\M^\omega,\D^\omega)$ has the
unique extension property (\textsf{UEP}). By Example \ref{UEP MASA},
$(\M^\omega,\D^\omega)$ has the unique pseudo-expectation property,
and the unique pseudo-expectation is a conditional expectation,
necessarily normal, faithful, and trace-preserving.
\end{proof}

\begin{remark} Contrasting statements (i) and (ii) of Theorem
\ref{W^*-inclusions} above, we see that separability plays a role in
the unique pseudo-expectation property.
\end{remark}

\section{Applications}

In this section we show that the faithful unique pseudo-expectation
property can substantially simplify \emph{$C^*$-envelope}
calculations, and we relate the faithful unique pseudo-expectation
property to \emph{norming} in the sense of Pop, Sinclair, and Smith.

\subsection{$C^*$-Envelopes}

Let $\C$ be a unital $C^*$-algebra and $\X \subseteq \C$ be a unital
operator space such that $C^*(\X) = \C$. There exists a unique
maximal closed two-sided ideal $\J \ideal \C$ such that quotient map
$q:\C \to \C/\J$ is completely isometric on $\X$
\cite{Hamana1979a}. Then $C_e^*(\X) = \C/\J$ is the
\emph{$C^*$-envelope} of $\X$, the (essentially) unique minimal
$C^*$-algebra generated by a completely isometric copy of $\X$. In
general, determining $C_e^*(\X)$ can be quite challenging.   However, if
$\D \subseteq \X \subseteq \C$, and $(\C,\D)$ has the faithful
unique pseudo-expectation property, then determining $C_e^*(\X)$ is
not hard at all.

\begin{theorem} \label{envelope} Let $(\C,\D)$ be a $C^*$-inclusion
with the faithful unique pseudo-expectation property (more generally,
such that every pseudo-expectation is faithful). If $\D \subseteq \X
\subseteq \C$ is an operator space, then $C_e^*(\X) = C^*(\X)$. That
is, the $C^*$-envelope equals the generated $C^*$-algebra.
\end{theorem}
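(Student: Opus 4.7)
The plan is to identify the Shilov ideal and use the hereditarily essential characterization (Theorem~\ref{faithful characterization}) to force it to be zero. Set $\C_0 := C^*(\X) \subseteq \C$, so $\D \subseteq \X \subseteq \C_0$. Let $\J \ideal \C_0$ be the unique maximal closed two-sided ideal of $\C_0$ such that the quotient map $q: \C_0 \to \C_0/\J$ is completely isometric on $\X$; then $C_e^*(\X) = \C_0/\J$, and the goal is to show $\J = 0$.

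The key observation is that because $\D \subseteq \X$, the restriction $q|_{\D}$ is in particular injective, so $\J \cap \D = 0$. That is, $\J$ is a $\D$-disjoint ideal of $\C_0$. Now the hypothesis that every pseudo-expectation for $(\C,\D)$ is faithful is equivalent, by Theorem~\ref{faithful characterization}, to $(\C,\D)$ being hereditarily essential. Since $\D \subseteq \C_0 \subseteq \C$, the inclusion $(\C_0,\D)$ is itself essential, so the only $\D$-disjoint ideal of $\C_0$ is the zero ideal. Hence $\J = 0$, and
\[
    C_e^*(\X) = \C_0/\J = \C_0 = C^*(\X).
\]

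The argument is very short because all the work has already been packaged into Theorem~\ref{faithful characterization}; there is no real obstacle. The one point worth emphasizing in the write-up is that we only need $(\C_0,\D)$ to be essential, and this is the weakest-looking consequence of the hereditarily essential hypothesis, which is precisely why the faithful unique pseudo-expectation property (or even the weaker hypothesis that every pseudo-expectation is faithful) is exactly the right hypothesis for this $C^*$-envelope statement.
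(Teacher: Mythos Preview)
Your proof is correct and follows essentially the same route as the paper's: identify the Shilov ideal $\J \ideal C^*(\X)$, observe that $\J$ is $\D$-disjoint because $\D \subseteq \X$, and then invoke Theorem~\ref{faithful characterization} (hereditary essentiality) to force $\J = 0$. The paper's argument is only more terse.
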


\begin{proof} By the previous discussion, $C_e^*(\X) = C^*(\X)/\J$,
where $\J \ideal C^*(\X)$ is the unique maximal closed two-sided ideal
such that $q:C^*(\X) \to C^*(\X)/\J$ is completely isometric on
$\X$. Since $\D \subseteq \X$, $\J$ must be $\D$-disjoint. But then
$\J = 0$, since $(\C,\D)$ is hereditarily essential, by Theorem
\ref{faithful characterization}.
\end{proof}

We say that a $C^*$-inclusion $(\C,\D)$ is \emph{$C^*$-envelope
determining} if $C_e^*(\X) = C^*(\X)$ for every operator space $\D
\subseteq \X \subseteq \C$. With this terminology, Theorem
\ref{envelope} becomes the implication
\[ \text{every pseudo-expectation faithful} \implies
\text{$C^*$-envelope determining}.
\] The converse is false.

\begin{example} Let $\C = M_{2 \times 2}(\bbC)$ and $\D = \bbC
I$. Then $(\C,\D)$ is $C^*$-envelope determining, but admits multiple
pseudo-expectations, some of which are not faithful.
\end{example}

\begin{proof} Let $\bbC \subseteq \X \subseteq M_{2 \times 2}(\bbC)$
be an operator space. Then $\dim(C^*(\X)) \in \{1, 2, 4\}$. If
$\dim(C^*(\X)) \in \{1, 2\}$, then $C^*(\X) = \X$, which implies
$C_e^*(\X) = \X$. Otherwise, if $\dim(C^*(\X)) = 4$, then $C^*(\X) =
M_{2 \times 2}(\bbC)$, which implies $C_e^*(\X) = C^*(\X)$, since
$M_{2 \times 2}(\bbC)$ is simple.
\end{proof}

\subsection{Norming}

According to Pitts' Theorem \ref{Pitts}, if $(\C,\D)$ is a regular MASA
inclusion with the faithful unique pseudo-expectation property, then
$\D$ \emph{norms} $\C$ in the sense of Pop, Sinclair, and Smith
\cite{PopSinclairSmith2000}. In this section we investigate the
relationship between the faithful unique pseudo-expectation property
and norming, for arbitrary $C^*$-inclusions. We show that the faithful
unique pseudo-expectation is conducive to norming (Theorem
\ref{conducive norming}), but does not imply it (Example \ref{not
norming}).

We begin by recalling the definition of norming, and proving some
general norming results which we will need later. Some of these
results may be of independent interest.

\begin{definition} We say that an inclusion $(\C,\D)$ is
\textbf{norming} if for any $X \in M_{d \times d}(\C)$, we have that
\[ \|X\| = \sup\{\|RXC\|: R \in \Ball(M_{1 \times d}(\D)), C \in
\Ball(M_{d \times 1}(\D))\}.
\]
\end{definition}

\begin{proposition} \label{exhaustion norming} Let $(\M,\D)$ be a
$W^*$-inclusion and $\{p_t\} \subseteq \D$ be an increasing net of
projections such that $\sup_t p_t = 1$. If $(p_t\M p_t,p_t\D p_t)$ is
norming for all $t$, then $(\M,\D)$ is norming.
\end{proposition}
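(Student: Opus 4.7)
The ``$\geq$'' direction is immediate since $\|R\|, \|C\| \le 1$ forces $\|RXC\| \le \|X\|$, so the plan is to prove the reverse inequality. Fix $X \in M_{d\times d}(\M)$ and set $P_t = p_t\otimes I_d \in M_{d\times d}(\D)$ (the diagonal matrix with $p_t$ on the diagonal). Since $\{p_t\}$ is an increasing net in $\D \subseteq \M \subseteq B(\H)$ with $\sup_t p_t = 1$, one has $p_t \to 1$ strongly, and a direct computation with matrix entries shows that $P_t X P_t \to X$ in the weak operator topology on $M_{d\times d}(B(\H)) = B(\H^d)$. By weak operator lower semicontinuity of the norm, together with $\|P_t X P_t\| \le \|X\|$, one concludes $\|P_t X P_t\| \to \|X\|$.

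Next I would use the hypothesis. For each $t$, the corner $p_t\M p_t$ is a $W^*$-algebra with unit $p_t$, $p_t\D p_t$ is a $\sigma$-weakly closed subalgebra with the same unit, and $P_t X P_t \in M_{d\times d}(p_t \M p_t)$. By the assumed norming property of $(p_t\M p_t, p_t\D p_t)$, we can write
\[
\|P_t X P_t\| = \sup\bigl\{\|R'\, P_t X P_t\, C'\| : R' \in \Ball(M_{1\times d}(p_t\D p_t)),\ C' \in \Ball(M_{d\times 1}(p_t\D p_t))\bigr\}.
\]
For any such $R', C'$, the entries of $R'$ and $C'$ already lie in $p_t\M p_t$, so $R' P_t = R'$ and $P_t C' = C'$; moreover $R' \in \Ball(M_{1\times d}(\D))$ and $C' \in \Ball(M_{d\times 1}(\D))$ under the inclusion $p_t\D p_t \subseteq \D$. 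Therefore
\[
\|P_t X P_t\| \le \sup\bigl\{\|RXC\| : R \in \Ball(M_{1\times d}(\D)),\ C \in \Ball(M_{d\times 1}(\D))\bigr\}
\]
for every $t$.

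Letting $t$ run through the net and using $\|P_t X P_t\| \to \|X\|$, this yields the desired inequality
\[
\|X\| \le \sup\bigl\{\|RXC\| : R \in \Ball(M_{1\times d}(\D)),\ C \in \Ball(M_{d\times 1}(\D))\bigr\},
\]
and completes the proof. The only nontrivial point is justifying $\|P_t X P_t\| \to \|X\|$; once that is in hand, the rest is a bookkeeping reduction to the corner, and there are no deep obstacles.
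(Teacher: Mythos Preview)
Your proof is correct and follows essentially the same strategy as the paper: approximate $X$ by its compression $P_tXP_t$ to the corner, invoke the norming hypothesis there, and pass back to $\M$ using that rows and columns over $p_t\D p_t$ sit inside those over $\D$ with $R'P_t=R'$, $P_tC'=C'$. The paper carries out the approximation step concretely by picking vectors $\xi,\eta\in\Ball(\H^d)$ with $|\langle X\xi,\eta\rangle|>\|X\|-\epsilon$ and then a single index $t$ for which $|\langle X P_t\xi,P_t\eta\rangle|$ is close, whereas you package the same idea as ``$P_tXP_t\to X$ weakly and the norm is lower semicontinuous''; these are the same argument at different levels of abstraction.
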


\begin{proof} Let $\H$ be the Hilbert space on which $\M$ acts. Fix $X
\in M_{d \times d}(\M)$ and $\epsilon > 0$. There exist $\xi, \eta \in
\Ball(\H^d)$ such that
\[ |\langle X\xi, \eta \rangle| > \|X\| - \epsilon.
\] Since $\sup_t p_t = 1$, there exists $t$ such that
\[ |\langle X(I_d \otimes p_t)\xi, (I_d \otimes p_t)\eta \rangle| >
|\langle X\xi, \eta \rangle| - \epsilon.
\] Set $\tilde{X} = (I_d \otimes p_t)X(I_d \otimes p_t) \in M_{d
\times d}(p_t\M p_t)$. Since $p_t\D p_t$ norms $p_t\M p_t$, there
exist $R \in \Ball(M_{1 \times d}(p_t\D p_t))$, $C \in \Ball(M_{d
\times 1}(p_t\D p_t))$ such that
\[ \|R\tilde{X}C\| > \|\tilde{X}\| - \epsilon.
\] Then $R \in \Ball(M_{1 \times d}(\D))$, $C \in \Ball(M_{d \times
1}(\D))$, and
\begin{eqnarray*} \|RXC\| &=& \|R(I_d \otimes p_t)X(I_d \otimes
p_t)C\| = \|R\tilde{X}C\|\\ &>& \|\tilde{X}\| - \epsilon \geq |\langle
\tilde{X}\xi, \eta \rangle| - \epsilon\\ &=& |\langle X(I_d \otimes
p_t)\xi, (I_d \otimes p_t)\eta \rangle| - \epsilon > |\langle X\xi,
\eta \rangle| - 2\epsilon\\ &>& \|X\| - 3\epsilon.
\end{eqnarray*}
\end{proof}

\begin{corollary} \label{direct sum norming} For $i \in I$, let
$(\M_i,\D_i)$ be a $W^*$-inclusion. If $(\M_i,\D_i)$ is norming for
all $i \in I$, then $(\bigoplus_{i \in I} \M_i,\bigoplus_{i \in I}
\D_i)$ is norming.
\end{corollary}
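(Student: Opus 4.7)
The plan is to reduce the general direct sum to the finite case by applying Proposition~\ref{exhaustion norming}, and then handle the finite case by choosing the summand that attains the norm.

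Set $\M = \bigoplus_{i\in I} \M_i$ and $\D = \bigoplus_{i \in I}\D_i$, and let $\mathcal{F}$ denote the directed set of finite subsets of $I$ ordered by inclusion. For each $F \in \mathcal{F}$, define the projection $p_F = \bigoplus_{i \in F} 1_{\M_i} \in \D$. Then $\{p_F\}_{F \in \mathcal{F}}$ is an increasing net in $\D$ with $\sup_F p_F = 1$. Moreover, under the obvious identifications,
\[
p_F \M p_F = \bigoplus_{i \in F} \M_i \quad \text{and} \quad p_F \D p_F = \bigoplus_{i \in F} \D_i.
\]
By Proposition~\ref{exhaustion norming}, it therefore suffices to show that for every finite $F \in \mathcal{F}$, the inclusion $\bigl(\bigoplus_{i \in F}\M_i, \bigoplus_{i \in F}\D_i\bigr)$ is norming.

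To handle the finite case, I would induct on $|F|$ (or equivalently argue directly for any finite $F$). Fix $d \geq 1$ and $X \in M_{d \times d}\bigl(\bigoplus_{i \in F}\M_i\bigr)$. Under the canonical identification $M_{d \times d}\bigl(\bigoplus_{i \in F}\M_i\bigr) = \bigoplus_{i \in F} M_{d \times d}(\M_i)$, write $X = \bigoplus_{i \in F} X_i$, so that $\|X\| = \max_{i \in F}\|X_i\|$. Pick $i_0 \in F$ attaining the maximum. Given $\varepsilon > 0$, use the norming hypothesis on $(\M_{i_0},\D_{i_0})$ to obtain $R_0 \in \Ball(M_{1 \times d}(\D_{i_0}))$ and $C_0 \in \Ball(M_{d \times 1}(\D_{i_0}))$ with $\|R_0 X_{i_0} C_0\| > \|X_{i_0}\| - \varepsilon = \|X\| - \varepsilon$. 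Extending $R_0, C_0$ by zero on the remaining summands produces $R \in \Ball(M_{1 \times d}(\bigoplus_{i \in F}\D_i))$ and $C \in \Ball(M_{d \times 1}(\bigoplus_{i \in F}\D_i))$ with $RXC = R_0 X_{i_0} C_0$ in the $i_0$-summand and zero elsewhere; in particular $\|RXC\| > \|X\| - \varepsilon$.

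There is no real obstacle here: once Proposition~\ref{exhaustion norming} is in hand, the argument is essentially bookkeeping. The only point requiring mild care is verifying that $\sup_F p_F = 1$ in $\M$ (which holds because the $p_F$ form an increasing net of projections converging strongly, and hence in the order of the self-adjoint part of $\M$, to the identity) and that the row/column matrices obtained by extending by zero remain in the unit balls of $M_{1 \times d}(\D)$ and $M_{d \times 1}(\D)$, both of which are immediate.
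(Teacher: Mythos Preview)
Your proof is correct and follows the route the paper implicitly intends: the corollary is stated immediately after Proposition~\ref{exhaustion norming} with no proof, so the reduction via the projections $p_F$ together with the straightforward finite case is exactly what is being suggested.

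One minor remark on efficiency: your finite-case argument actually works verbatim for arbitrary $I$, so Proposition~\ref{exhaustion norming} is not strictly needed here. For $X = \bigoplus_{i\in I} X_i \in M_{d\times d}(\M)$ one has $\|X\| = \sup_{i\in I}\|X_i\|$; given $\varepsilon>0$ choose $i_0$ with $\|X_{i_0}\| > \|X\| - \varepsilon/2$, then apply the norming of $(\M_{i_0},\D_{i_0})$ and extend by zero exactly as you do. This avoids the detour through finite truncations, though your version is of course also valid.
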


\begin{lemma} \label{quotient norming} Let $(\C,\D)$ be a
$C^*$-inclusion and $\I \ideal \C$. If $(\D + \I)/\I$ norms $\C/\I$,
then for every $X \in M_{d \times d}(\C)$, there exist $R \in
\Ball(M_{1 \times d}(\D))$ and $C \in \Ball(M_{d \times 1}(\D))$ such
that
\[ \|RXC\| > \|X + M_{d \times d}(\I)\| - \epsilon.
\]
\end{lemma}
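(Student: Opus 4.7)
The plan is to pull norming data back from the quotient $\C/\I$ to $\D$, with careful control of the norm loss in the lift.

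Fix $X \in M_{d \times d}(\C)$ and $\epsilon > 0$, and let $r = \|X + M_{d \times d}(\I)\|$. Write $q:\C \to \C/\I$ for the quotient map, extended entrywise to matrices. By the norming hypothesis applied to $q(X) \in M_{d \times d}(\C/\I)$, there exist
\[
\tilde R \in \Ball(M_{1 \times d}((\D+\I)/\I)) \quad \text{and} \quad \tilde C \in \Ball(M_{d \times 1}((\D+\I)/\I))
\]
with $\|\tilde R \, q(X) \, \tilde C\| > r - \epsilon/2$.

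Next, I would invoke the canonical $*$-isomorphism $(\D+\I)/\I \cong \D/(\D \cap \I)$, which is isometric at all matrix levels, to view $\tilde R$ and $\tilde C$ as matrices over $\D/(\D \cap \I)$. Pick $\delta > 0$ (to be chosen at the end). The key technical step is to lift these to matrices over $\D$ itself with norm at most $1+\delta$. To do so, pad $\tilde R$ with zero rows into an element of $M_d(\D/(\D \cap \I)) \cong M_d(\D)/M_d(\D \cap \I)$, and apply the standard fact that any element of a $C^*$-quotient can be lifted with norm inflated by less than $\delta$: there exists $\widehat R \in M_d(\D)$ with image equal to the padded $\tilde R$ and $\|\widehat R\| < 1 + \delta$. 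Left multiplication by the matrix unit $e_{11}$ then yields $R_0 \in M_{1 \times d}(\D)$ which still lifts $\tilde R$ and satisfies $\|R_0\| \leq \|\widehat R\| < 1 + \delta$. An analogous construction produces $C_0 \in M_{d \times 1}(\D)$.

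Finally, set $R := R_0/(1+\delta)$ and $C := C_0/(1+\delta)$, so that $R$ and $C$ lie in the required balls of $\D$-matrices. Because $q(R_0 X C_0) = \tilde R \, q(X) \, \tilde C$,
\[
\|R_0 X C_0\| \;\geq\; \|q(R_0 X C_0)\| \;>\; r - \epsilon/2,
\]
and consequently $\|R X C\| > (r - \epsilon/2)/(1+\delta)^2$. Choosing $\delta$ small enough (depending on $r$ and $\epsilon$, with the case $r \leq \epsilon$ being trivial via $R = 0$) makes the right-hand side exceed $r - \epsilon$.

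The main obstacle is the norm-controlled lift. The naive move of lifting $\tilde R$ to some $R_0' \in M_{1 \times d}(\D+\I)$ and then splitting $R_0' = R_0 + R_0''$ with $R_0 \in M_{1 \times d}(\D)$ and $R_0'' \in M_{1 \times d}(\I)$ offers no control over $\|R_0\|$, since the sum $\D + \I$ admits no bounded Banach decomposition in general. The trick is to bypass this by working through the isomorphism $(\D+\I)/\I \cong \D/(\D \cap \I)$, where the $C^*$-quotient norm forces any lift from $\D$ to have essentially the same norm as the original element in the quotient.
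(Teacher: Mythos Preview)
Your proof is correct and follows essentially the same route as the paper: both hinge on the complete isometry $(\D+\I)/\I \cong \D/(\D\cap\I)$ to transfer the norming data to a quotient of $\D$, and then lift with controlled norm using that the $C^*$-quotient norm is an infimum. The only cosmetic difference is that the paper arranges the quotient rows/columns to have norm strictly less than $1$ at the outset (so the lift lands directly in the open ball), whereas you lift to norm $<1+\delta$ and rescale at the end; your padding-into-$M_d$ maneuver is just an explicit way of handling the rectangular case that the paper treats implicitly.
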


\begin{proof} Let $\pi:\C \to \C/\I$ be the quotient map. Fix $X \in
M_{d \times d}(\C)$ and $\epsilon > 0$. By assumption, there exist $R
\in M_{1 \times d}(\D)$ and $C \in M_{d \times 1}(\D)$ such that
$\|\pi_{1 \times d}(R)\| < 1$, $\|\pi_{d \times 1}(C)\| < 1$, and
\[ \|\pi_{1 \times d}(R)\pi_{d \times d}(X)\pi_{d \times 1}(C)\| >
\|\pi_{d \times d}(X)\| - \epsilon.
\] Now the map
\[ \D/(\D \cap \I) \to (\D + \I)/\I: d + \D \cap \I \mapsto d + \I
\] is a unital $*$-isomorphism, in particular a complete
isometry. Thus
\[ \|\pi_{1 \times d}(R)\| = \|R + M_{1 \times d}(\I)\| = \|R + M_{1
\times d}(\D \cap \I)\|.
\] It follows that there exists $\tilde{R} \in M_{1 \times d}(\D)$
such that $\|\tilde{R}\| < 1$ and $\pi_{1 \times d}(\tilde{R}) =
\pi_{1 \times d}(R)$. Likewise there exists $\tilde{C} \in M_{d \times
1}(\D)$ such that $\|\tilde{C}\| < 1$ and $\pi_{d \times 1}(\tilde{C})
= \pi_{d \times 1}(C)$. Then
\[ \|\tilde{R}X\tilde{C}\| \geq \|\pi_{1 \times d}(\tilde{R})\pi_{d
\times d}(X)\pi_{d \times 1}(\tilde{C})\| > \|\pi_{d \times d}(X)\| -
\epsilon.
\]
\end{proof}

\begin{proposition} \label{ultrapower norming} Let $\M$ be a $II_1$
factor, $\D \subseteq \M$ be a MASA, and $\omega \in \beta\bbN
\backslash \bbN$. If $(\M^\omega,\D^\omega)$ is norming, then so is
$(\M,\D)$.
\end{proposition}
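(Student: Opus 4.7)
The plan is to transfer the norming property from $(\M^\omega,\D^\omega)$ down to $(\M,\D)$ via the canonical diagonal embedding $\iota:\M\to\M^\omega$ which sends $x$ to the class of the constant sequence, and similarly $\D\hookrightarrow\D^\omega$. As a unital $*$-monomorphism, $\iota$ is completely isometric, so for $X\in M_{d\times d}(\M)$ we have $\|\iota_{d\times d}(X)\|_{M_{d\times d}(\M^\omega)}=\|X\|_{M_{d\times d}(\M)}$. Fix $\epsilon>0$. Applying the norming hypothesis for $(\M^\omega,\D^\omega)$ to $\iota_{d\times d}(X)$, I would choose $\tilde R\in\Ball(M_{1\times d}(\D^\omega))$ and $\tilde C\in\Ball(M_{d\times 1}(\D^\omega))$ with
\[
\|\tilde R\,\iota_{d\times d}(X)\,\tilde C\|_{\M^\omega}>\|X\|-\epsilon.
\]

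The key technical ingredient is the elementary upper bound
\[
\|(y_n)+\I_\omega\|_{\M^\omega}\le\lim_{n\to\omega}\|y_n\|_\M
\]
valid for every bounded sequence $(y_n)$ in $\M$. This follows directly from the quotient-norm definition: on an $\omega$-large set $A$ where $\|y_n\|\le\lim_\omega\|y_n\|+\epsilon$, the sequence $(y_n\chi_{A^c}(n))_n$ lies in $\I_\omega$, and subtracting it from $(y_n)$ yields a representative of $(y_n)+\I_\omega$ with $\ell^\infty$-norm at most $\lim_\omega\|y_n\|+\epsilon$. The same inequality transfers entrywise to row, column, and matrix ultrapowers. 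I would apply it in two ways. First, since $\D^\omega\cong\ell^\infty(\D)/(\ell^\infty(\D)\cap\I_\omega)$, one can choose representative sequences $(R_n)\subset M_{1\times d}(\D)$ of $\tilde R$ and $(C_n)\subset M_{d\times 1}(\D)$ of $\tilde C$ with $\|R_n\|,\|C_n\|\le 1+\delta$ for every $n$. Second,
\[
\|X\|-\epsilon<\|\tilde R\,\iota_{d\times d}(X)\,\tilde C\|_{\M^\omega}\le\lim_{n\to\omega}\|R_nXC_n\|_\M,
\]
so the set of $n$ with $\|R_nXC_n\|>\|X\|-2\epsilon$ lies in $\omega$ and is, in particular, nonempty. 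For any such $n$, the rescaled pair $R_n/(1+\delta)\in\Ball(M_{1\times d}(\D))$ and $C_n/(1+\delta)\in\Ball(M_{d\times 1}(\D))$ approximates $\|X\|$ to within an error of order $\epsilon+\delta$. Letting $\epsilon,\delta\to 0$ proves that $(\M,\D)$ is norming.

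The main obstacle is establishing and correctly invoking the upper-bound lemma for the tracial ultrapower norm. A priori one only knows that the quotient norm is bounded by the $\ell^\infty$-sup of a chosen representative, which is too coarse since it cannot detect behavior along $\omega$; the crucial extra flexibility comes from quotienting by the $\|\cdot\|_2$-null ideal $\I_\omega$ rather than the operator-norm null ideal, which allows modification of a representative on any $\omega$-small set at no operator-norm cost. Once this lemma is in hand (and transferred to the matricial setting), the remainder of the argument is a routine bookkeeping of $\epsilon$'s and a rescaling of $R_n,C_n$ to land in the unit ball.
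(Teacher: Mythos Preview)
Your argument is correct and follows essentially the same strategy as the paper: embed $\M$ diagonally in $\M^\omega$, apply the norming hypothesis, lift the row and column back to $\ell^\infty(\D)$, and extract a single index $n$. The paper packages the lifting step into a general quotient-norming lemma and, by first scaling $\tilde R,\tilde C$ to have norm strictly less than $1$, obtains lifts $(R_n),(C_n)$ with $\sup_n\|R_n\|<1$ and $\sup_n\|C_n\|<1$ directly, thereby avoiding both your explicit upper-bound lemma $\|(y_n)+\I_\omega\|\le\lim_\omega\|y_n\|$ and the final rescaling by $1+\delta$; but these are cosmetic differences, not substantive ones.
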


\begin{proof} By assumption, $\D^\omega = (\ell^\infty(\D) +
\I_\omega)/\I_\omega$ norms $\M^\omega =
\ell^\infty(\M)/\I_\omega$. Let $X \in M_{d \times d}(\M)$ and
$\epsilon > 0$. Then $(X) \in M_{d \times d}(\ell^\infty(\M)) =
\ell^\infty(M_{d \times d}(\M))$. By Lemma \ref{quotient norming},
there exist $(R_n) \in M_{1 \times d}(\ell^\infty(\D)) =
\ell^\infty(M_{1 \times d}(\D))$ and $(C_n) \in M_{d \times
1}(\ell^\infty(\D)) = \ell^\infty(M_{d \times 1}(\D))$ such that
$\|(R_n)\| < 1$, $\|(C_n)\| < 1$, and
\[ \|(R_n)(X)(C_n)\| > \|(X) + M_{d \times d}(\I_\omega)\| - \epsilon.
\] Thus
\[ \sup_n \|R_n\| < 1, ~ \sup_n \|C_n\| < 1, \text{ and } \sup_n
\|R_nXC_n\| > \|X\| - \epsilon.
\]
\end{proof}

Now we list some classes of \cstar-inclusions for which (\textsf{f!PsE}) $\implies$ (\textsf{Norming}).

\begin{theorem} \label{conducive norming} For the following classes of
$C^*$-inclusions, the faithful unique pseudo-expectation property
implies norming:
\begin{enumerate}
\item Regular MASA inclusions $(\C,\D)$. In particular,
transformation group $C^*$-algebras $(C(X) \rtimes_r \Gamma,C(X)) =
(C(X) \rtimes \Gamma,C(X))$.
\item Abelian inclusions $(C(Y),C(X))$.
\item $C^*$-inclusions $(\C,\D)$ with $\D \subseteq \C \subseteq
I(\D)$ (i.e., operator space essential inclusions).
\item $W^*$-inclusions $(B(\H),\M)$.
\item $W^*$-inclusions $(\M,\D)$, with $\M_*$ separable and $\D$
abelian.
\end{enumerate}
\end{theorem}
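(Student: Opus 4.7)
The plan is to verify the norming conclusion for each of the five classes listed, proceeding from the cases requiring essentially no new work up to the genuinely hard one.

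Cases (i) and (ii) come nearly for free. For regular MASA inclusions, norming is exactly the content of Pitts' Theorem~\ref{Pitts}(iii). For the transformation group subcase with $C(X)\rtimes_r\Gamma=C(X)\rtimes\Gamma$, Theorem~\ref{crossed} forces (\textsf{f!PsE}) to be equivalent to $C(X)$ being a regular MASA in $C(X)\rtimes_r\Gamma$, returning us to the first subcase. Case (ii) requires no appeal to (\textsf{f!PsE}) at all: as noted in the introductory example, any abelian inclusion $(C(Y),C(X))$ is automatically norming. Indeed, for $X\in M_d(C(Y))$ one has $\|X\|=\sup_{y\in Y}\|X(y)\|$, and at each $y$ one can choose scalar row/column vectors $r,c\in\mathrm{Ball}(\bbC^d)$ achieving $|\langle X(y)c,r\rangle|$ close to $\|X(y)\|$; lifting $r,c$ to constant functions in $C(X)$ produces norming tuples.

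For case (iii), norming is hereditary from above (immediate from the definition of the norming formula), so it suffices to show that $\D$ norms $I(\D)$. I would try to exploit the rigidity of the injective envelope together with the fact that $\id_{I(\D)}$ is the unique ucp map extending $\id_\D$ (Example~\ref{OSE}), aiming to force, for each $X\in M_d(I(\D))$ and $\epsilon>0$, the existence of norming $R,C$ drawn from $\D$. This is the main obstacle of the proof: $I(\D)$ can be a very opaque object (for instance the Dixmier algebra when $\D=C[0,1]$), and it is not transparent how (\textsf{f!PsE}) converts into a concrete ability to produce norming pairs in $\D$. If a direct rigidity argument stalls, a reasonable fallback is to pick a faithful representation $I(\D)\hookrightarrow B(\H)$ and attempt to lift norming from a $W^*$-computation along the lines of cases (iv)--(v).

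Cases (iv) and (v) exploit the structure theorems already in hand. In case (iv), Theorem~\ref{BH-inclusions}(ii) together with Theorem~\ref{W^* ARC} yields the decomposition $\H=\bigoplus_m \ell^2_m\otimes\H_m$ with $\M=\bigoplus_m B(\ell^2_m)\,\overline{\otimes}\,\A_m$ and each $\A_m$ an atomic MASA in $B(\H_m)$. In case (v), Theorem~\ref{W^*-inclusions}(i) provides that $\M$ is type $I$ with an abelian-projection family $\{p_t\}\subseteq\D$ summing to $1$. In both cases I would apply Corollary~\ref{direct sum norming} followed by Proposition~\ref{exhaustion norming} to reduce to norming on a single summand, respectively on each corner $p_t\M p_t$. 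What remains are norming statements on atomic pieces, where a norming row/column pair $R,C$ supported on finitely many atoms corresponding to nearly extremal rank-one vectors yields the required inequality by a direct matrix computation. Cases (i), (ii), (iv), and (v) should thus be relatively routine given the prior development; the genuine conceptual difficulty is concentrated in case (iii).
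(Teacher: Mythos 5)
Your handling of cases (i) and (ii) agrees with the paper (for (ii) the paper simply cites Pop--Sinclair--Smith; your elementary point-evaluation argument is the same fact). The genuine gap is case (iii), which you correctly flag as the obstacle but do not resolve; neither rigidity nor a fallback representation is how the paper proceeds. The missing idea is Magajna's theorem \cite[Thm.~2.1]{Magajna1999}: the quantities
\[
\gamma_d(X)=\sup\{\|RXC\| : R\in\Ball(M_{1\times d}(\D)),\ C\in\Ball(M_{d\times 1}(\D))\}
\]
define an operator space structure $\gamma$ on $\C$ with $\gamma_1(x)=\|x\|$, $\gamma_d(X)\le\|X\|$, and $\gamma_d(D)=\|D\|$ for $D\in M_{d\times d}(\D)$. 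Hence $\id:(\C,\|\cdot\|)\to(\C,\gamma)$ is a complete contraction which is completely isometric on $\D$, and the definition of operator space essentiality forces it to be completely isometric on all of $\C$; that is, $\gamma_d(X)=\|X\|$ for every $X$, which is exactly norming. Note that the pseudo-expectation plays no direct role here: only the hypothesis $\D\subseteq\C\subseteq I(\D)$ is used, and no extraction of norming pairs from $\Phi$ is required.

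There are also problems in (iv) and (v). In (iv), Corollary \ref{direct sum norming} concerns the inclusion $\bigoplus_m\D_m\subseteq\bigoplus_m\M_m$, whereas your containing algebra is $B\bigl(\bigoplus_m \ell^2_m\otimes\H_m\bigr)$, which is strictly larger than $\bigoplus_m B(\ell^2_m\otimes\H_m)$ (it contains off-diagonal blocks), so the proposed reduction to a single summand does not apply. The paper's argument is shorter: since $\M'$ is abelian, choose a MASA $\A$ with $\M'\subseteq\A\subseteq B(\H)$; then $\A=\A'\subseteq\M''=\M$, and since MASAs norm $B(\H)$ and norming is hereditary from below, $\M$ norms $B(\H)$. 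In (v), your final ``direct matrix computation on atomic pieces'' is not available: the projections $p_t$ are abelian for $\M$ but need not be minimal, so after reducing (via Proposition \ref{exhaustion norming} and Corollary \ref{direct sum norming}, which do apply correctly here) to the type $I_n$ case, one has $\M\cong M_{n\times n}(\A)$ with $\A$ a possibly diffuse MASA on a separable Hilbert space, and $\D$ unitarily equivalent to $\ell^\infty_n(\A)$ by Kadison's diagonalization theorem. The needed ingredient at that point is the nontrivial Pop--Sinclair--Smith theorem that a MASA acting on a separable Hilbert space norms all of $B(\H^n)$, not an atomic computation.
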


\begin{proof} (i) This is  Pitts' Theorem \ref{Pitts}.

(ii) Abelian inclusions $(C(Y),C(X))$ are norming, whether or not they
have the faithful unique pseudo-expectation property
\cite[Ex. 2.5]{PopSinclairSmith2000}.

(iii) Let $(\C,\D)$ be an operator space essential inclusion (see the discussion preceding Example \ref{OSE}). For $X
\in M_{d \times d}(\C)$, define
\[ \gamma_d(X) = \sup\{\|RXC\|: R \in \Ball(M_{1 \times d}(\D)), ~ C
\in \Ball(M_{d \times 1}(\D))\}.
\] By \cite[Thm. 2.1]{Magajna1999}, $\gamma = (\gamma_d)_{d=1}^\infty$
is an operator space structure on $\C$ with the following properties:
\begin{itemize}
\item $\gamma_1(x) = \|x\|$, $x \in \C$;
\item $\gamma_d(X) \leq \|X\|$, $X \in M_{d \times d}(\C)$;
\item $\gamma_d(D) = \|D\|$, $D \in M_{d \times d}(\D)$.
\end{itemize} It follows that the identity map $\iota:(\C,\|\cdot\|)
\to (\C,\gamma)$ is a complete contraction which is completely
isometric on $\D$. Since $(\C,\D)$ is operator space essential,
$\iota$ is actually completely isometric on $\C$, so $(\C,\D)$ is norming.

(iv) By Theorem \ref{W^*-inclusions}, $\M'$ is abelian. Let $\M'
\subseteq \A \subseteq B(\H)$ be a MASA. Then $\A = \A' \subseteq \M''
= \M$. Since $\A$ norms $B(\H)$, $\M$ norms $B(\H)$ as well.

(v) By Theorem \ref{W^*-inclusions}, $\M$ is type $I$, $\D$ is a
MASA, and there exist abelian projections $\{p_t\} \subseteq \D$ for $\M$ such that $\sum_t p_t = 1$. Letting $p_F = \sum_{t \in F} p_t$ for every
finite set of indices $F$, we obtain an increasing net $\{p_F\}
\subseteq \D$ of finite projections for $\M$ such that $\sup_F p_F = 1$. By
Proposition \ref{exhaustion norming}, to prove that $(\M,\D)$ is
norming it suffices to prove that $(p_F\M p_F,\D p_F)$ is norming for
each $F$. Thus we may assume that $\M$ is finite type $I$. By
Corollary \ref{direct sum norming}, we may further assume that $\M$ is
type $I_n$ for some $n \in \bbN$. There is a (non-spatial)
$*$-isomorphism $\M = M_{n \times n}(\A) \subseteq B(\H^n)$, where $\A
\subseteq B(\H)$ is a MASA and $\H$ is separable. By
\cite[Thm. 3.19]{Kadison1984}, there exists a unitary $u \in \M$ such
that $u\D u^* = \ell_n^\infty(\A)$. It follows that $\D \subseteq
B(\H^n)$ is a MASA, which implies that $\D$ norms $B(\H^n)$
\cite[Thm. 2.7]{PopSinclairSmith2000}. Thus $\D$ norms $\M$.
\end{proof}

\begin{example} \label{not norming} There exists a $II_1$ factor $\M$
and a singular MASA $\D \subseteq \M$ such that $(\M,\D)$ has the
faithful unique pseudo-expectation property, but $\D$ does not norm
$\M$. Of course $\M_*$ is non-separable.
\end{example}

\begin{proof} Let $\bbF_2$ be the free group on two generators $u$ and
$v$, and let $\D = W^*(u) \subseteq W^*(\bbF_2) = \M$. Then $\M$ is a
$II_1$ factor, $\D$ is a singular MASA, and $\D$ does not norm $\M$
\cite[Thm. 5.3]{PopSinclairSmith2000}. Now let $\omega \in \beta\bbN
\backslash \bbN$. Then $\M^\omega$ is a $II_1$ factor, $\D^\omega$ is
a singular MASA, and $(\M^\omega,\D^\omega)$ has the faithful unique
pseudo-expectation property, by Theorem \ref{W^*-inclusions}. But
$\D^\omega$ does not norm $\M^\omega$, by Proposition \ref{ultrapower
norming}.
\end{proof}

\section{Conclusion}

We conclude this paper with a list of questions and partial progress toward some answers.

\subsection{Questions}

\begin{enumerate}
\item[Q1] Is the condition (\textsf{Reg}) hereditary from above? That is, if $(\C,\D)$ is a regular inclusion and $\D \subseteq \C_0 \subseteq \C$ is a $C^*$-algebra, is $(\C_0,\D)$ a regular inclusion? (We expect that the answer is ``no''.)
\item[Q2] Is the condition (\textsf{UEP}) hereditary from below? That is, if $(\C,\D)$ has the unique extension property and $\D \subseteq \D_0 \subseteq \C$ is a $C^*$-algebra, does $(\C,\D_0)$ have the unique extension property? (Again, we expect that the answer is ``no''.)
\item[Q3] Is there a $C^*$-inclusion $(\C,\D)$ with a unique conditional expectation, but multiple pseudo-expectations?
\item[Q4] If $(\C,\D)$ has the unique extension property (\textsf{UEP}), does $(\C,\D)$ have the unique pseudo-expectation property? (By Example \ref{UEP MASA}, the answer is ``yes'' if $\D$ is abelian.)
\item[Q5] If $(\C,\D)$ has the unique pseudo-expectation property, is $\D^c$ abelian? What if $\D$ is abelian? (By Corollary \ref{!fPse implies ARC}, the answer is ``yes'' if $(\C,\D)$ has the faithful unique pseudo-expectation property.)
\item[Q6] If every pseudo-expectation is faithful, is there a unique pseudo-expectation? Equivalently, by Theorem \ref{faithful characterization}, does $(\C,\D)$ have the faithful unique pseudo-expectation property iff $(\C,\D)$ is hereditarily essential?
\item[Q7] Let $\Gamma$ be a discrete group acting on a compact Hausdorff space $X$ by homeomorphisms. Find a condition on the action equivalent to $(C(X) \rtimes_r \Gamma,C(X))$ having the unique pseudo-expectation property. (By Theorem \ref{crossed}, if $C(X) \rtimes_r \Gamma = C(X) \rtimes \Gamma$, then $(C(X) \rtimes_r \Gamma,C(X))$ has the unique pseudo-expectation property iff the action of $\Gamma$ on $X$ is topologically free.)
\item[Q8] Is the $C^*$-inclusion $(B(\ell^2)/K(\ell^2),\ell^\infty/c_0)$ norming?
\item[Q9] Is there a condition on a $C^*$-inclusion $(\C,\D)$ which together with the faithful unique pseudo-expectation property implies norming? In particular, is the separability of $\C$ such a condition?
\item[Q10] Is there a condition on a $W^*$-inclusion $(\M,\D)$ which together with the faithful unique pseudo-expectation property implies norming? In particular, is the separability of $\M_*$ such a condition? (By Theorem \ref{conducive norming}, the answer is ``yes'' if $\D$ is abelian.)
\end{enumerate}

\subsection{Progress on Questions 5 and 6}
\subsubsection*{\textsc{Question 5:}}
We are able to show that if $(\C,\D)$ is a
$C^*$-inclusion with $\D$ abelian, such that there exists a unique
pseudo-expectation $\Phi$, then $\Phi$ is multiplicative on $\D^c$
(Proposition \ref{relative commutant multiplicative}). We regard this
as partial progress toward proving that $\D^c$ is abelian. Indeed, by
Corollary \ref{abelian}, if $\D^c$ is abelian and $\Phi$ is the unique
pseudo-expectation for $(\C,\D)$, then $\Phi$ necessarily is
multiplicative on $\D^c$.

\begin{proposition} \label{relative commutant multiplicative} Let
$(\C,\D)$ be a $C^*$-inclusion, with $\D$ abelian. If $(\C,\D)$ has
unique pseudo-expectation $\Phi \in \PsExp(\C,\D)$, then $\Phi$ is
multiplicative on $\D^c$.
\end{proposition}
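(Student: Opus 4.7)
The plan is to show that every self-adjoint element of $\D^c$ lies in the multiplicative domain of $\Phi \colon \C \to I(\D)$; the standard fact that the multiplicative domain is a \cstar-subalgebra will then force $\D^c$ itself into the multiplicative domain, proving that $\Phi|_{\D^c}$ is a $*$-homomorphism. The key observation is that for $x \in (\D^c)_{sa}$, the subalgebra $\A := C^*(\D,x) \subseteq \C$ is abelian, since $\D$ is abelian and $x$ is a self-adjoint element commuting with $\D$. This reduces the claim to the abelian case already handled by Corollary~\ref{abelian}.

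First I would fix $x = x^* \in \D^c$ and form $\A := C^*(\D,x)$, which is abelian by the observation above. Because $\D \subseteq \A \subseteq \C$ and $(\C,\D)$ has the unique pseudo-expectation property, Proposition~\ref{hereditary from above} shows that $(\A,\D)$ also has the unique pseudo-expectation property, with unique pseudo-expectation $\Phi|_\A$. Since $(\A,\D)$ is an abelian inclusion, the identification
\[
\Ext(\PsExp(\A,\D)) = \PsExp^\times(\A,\D)
\]
from Section~\ref{abelian section}, combined with Theorem~\ref{KM}, forces this $\Phi|_\A$ to coincide with the unique extreme pseudo-expectation and hence to be multiplicative. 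In particular,
\[
\Phi(x^*x) \;=\; \Phi(x^2) \;=\; \Phi(x)^2 \;=\; \Phi(x)^*\Phi(x).
\]

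By Choi's theorem on the multiplicative domain of a ucp map (cf.~\cite{Paulsen2002}), this equality places $x$ in the multiplicative domain of $\Phi$. Since the multiplicative domain is a \cstar-subalgebra of $\C$ and now contains every self-adjoint element of $\D^c$, and since every element of $\D^c$ is a complex linear combination of two such self-adjoint elements, the entire relative commutant $\D^c$ lies in the multiplicative domain of $\Phi$. Consequently $\Phi(ab) = \Phi(a)\Phi(b)$ for all $a,b \in \D^c$, proving that $\Phi|_{\D^c}$ is multiplicative.

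No serious obstacle arises in this approach; the only insight required is to notice that $C^*(\D,x)$ is automatically abelian when $x \in (\D^c)_{sa}$, which is precisely what lets the abelian inclusion theory of Section~\ref{abelian section} carry the argument.
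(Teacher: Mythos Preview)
Your proof is correct and follows essentially the same approach as the paper: both arguments fix $x \in (\D^c)_{sa}$, observe that $C^*(\D,x)$ is abelian, invoke Proposition~\ref{hereditary from above} to obtain uniqueness of the pseudo-expectation on this subalgebra, and then use the abelian theory (your appeal to Theorem~\ref{KM} and the identification $\Ext(\PsExp(\A,\D)) = \PsExp^\times(\A,\D)$ is exactly the content of Corollary~\ref{abelian}) to conclude multiplicativity and place $x$ in the multiplicative domain. Your write-up is slightly more explicit about why the multiplicative domain then contains all of $\D^c$, but the argument is the same.
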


\begin{proof} Let
\[ M_\Phi = \{x \in \C: \Phi(x^*x) = \Phi(x)^*\Phi(x), ~ \Phi(xx^*) =
\Phi(x)\Phi(x)^*\}
\] be the \emph{multiplicative domain} of $\Phi$, the largest
$C^*$-subalgebra of $\C$ on which $\Phi$ is multiplicative \cite[Thm.\
3.18]{Paulsen2002}. Suppose $x \in (\D^c)_{sa}$. Then $\C_x =
C^*(\D,x)$ is a unital abelian $C^*$-algebra containing $\D$. By
Proposition \ref{hereditary from above}, $(\C_x,\D)$ has unique
pseudo-expectation $\Phi|_{\C_x}$, which is multiplicative by
Corollary \ref{abelian}. It follows that $x \in M_\Phi$.
\end{proof}

\subsubsection*{\textsc{Question 6:}}
We  show that if $(\M,\D)$ is a
$W^*$-inclusion with $\D$ injective, such that every
pseudo-expectation is faithful, then there exists a unique
pseudo-expectation (Proposition \ref{all faithful implies unique}).

Recall that a bounded linear map $T$ between von Neumann algebras
$\M$ and $\N$ is \textit{singular} if $f\circ T\in (\M_*)^\perp$
whenever $f \in \N_*$.

\begin{lemma} \label{singular cp} Let $(\M,\D)$ be a $W^*$-inclusion
and $\theta:\M \to \D$ be a completely positive $\D$-bimodule map. If
$\theta$ is singular, then for every projection $0 \neq p \in Z(\D)$,
there exists a projection $0 \neq e \in \M$ such that $e \leq p$ and
$\theta(e) = 0$.
\end{lemma}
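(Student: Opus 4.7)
The plan is to use Akemann's classical characterization of singular positive linear functionals on a $W^*$-algebra: a positive $\psi$ on $\M$ is singular iff every non-zero projection of $\M$ dominates a non-zero sub-projection annihilated by $\psi$. I would first reduce to the case $p = 1$, and then apply this characterization to a suitably chosen positive singular functional derived from $\theta$.

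For the reduction, since $p \in Z(\D)$ and $\theta$ is a $\D$-bimodule map, the identity $\theta(pxp) = p\theta(x)p \in \D p$ yields a CP $(\D p)$-bimodule restriction $\theta_p : p\M p \to \D p$, and this restriction is still singular: any normal positive $\tilde{\phi} \in (\D p)_*$ extends to a normal positive $\phi \in \D_*$ concentrated on $p$ via $\phi(d) := \tilde{\phi}(dp)$, and $\tilde{\phi} \circ \theta_p$ is then the restriction of the singular functional $\phi \circ \theta$ to $p\M p$. Because $\phi(\theta(x)) = \phi(\theta(pxp))$, this functional is supported on $p$ in $\M$, so its restriction to the corner $p\M p$ remains singular. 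After relabelling I may assume $p = 1$; if $\theta(1) = 0$ take $e = 1$, so I may assume $c := \theta(1) \in Z(\D)_+ \setminus \{0\}$.

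For the main step I would first dispatch the case where $\D$ is $\sigma$-finite: choose a faithful normal state $\phi$ on $\D$, so that $\psi := \phi \circ \theta$ is a non-zero positive singular functional on $\M$ (as $\psi(1) = \phi(c) > 0$). Akemann's theorem supplies a non-zero projection $e \in \M$ with $\psi(e) = 0$, and faithfulness of $\phi$ combined with $\theta(e) \in \D_+$ forces $\theta(e) = 0$, as desired. In the general case, where $\D$ need not admit a faithful normal state, I would replace $\phi$ by a faithful normal semifinite weight $\varphi$ on $\D$ and set $\tau := \varphi \circ \theta$. A Tomiyama-style normal/singular decomposition for CP $\D$-bimodule maps implies that singularity of $\theta$ passes to $\tau$ in the sense that $\tau$ is a singular weight on $\M$, and the weight-theoretic analogue of Akemann's theorem then produces an increasing net of projections $\{e_\alpha\} \subseteq \M$ with $e_\alpha \uparrow 1$ and $\tau(e_\alpha) = 0$ for every $\alpha$. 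Faithfulness of $\varphi$ and positivity of $\theta(e_\alpha)$ give $\theta(e_\alpha) = 0$, so any non-zero $e_\alpha$ is the required projection.

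The main obstacle will be the non-$\sigma$-finite case. When $\D = B(\H)$ for non-separable $\H$, for instance, $\D$ admits no faithful normal state and $Z(\D) = \bbC$ offers no central cutting into $\sigma$-finite pieces, so weights rather than states are essential. Rigorously establishing that $\tau = \varphi \circ \theta$ is a singular weight, which is the technical heart of the argument, requires combining the structural decomposition of CP $\D$-bimodule maps into normal and singular parts with Haagerup's singular weight theory. A direct Zornification on pairwise orthogonal families of projections in $\ker \theta$ is tempting but stumbles on the non-normality of $\theta$, since suprema of such families need not be annihilated by $\theta$; so some singularity-based existence result along Akemann's or Haagerup's lines seems unavoidable.
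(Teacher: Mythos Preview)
Your approach via Akemann's characterization of singular positive functionals (Takesaki III.3.8) is exactly the right tool, and your $\sigma$-finite argument is essentially the paper's argument specialized to that case. However, your proposed treatment of the non-$\sigma$-finite case via faithful normal semifinite weights and a ``singular weight'' analogue of Akemann's theorem is unnecessarily heavy, and as you yourself acknowledge, the passage from singularity of $\theta$ to singularity of $\varphi \circ \theta$ as a weight is not established and would require real work.

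The paper sidesteps this entirely with a localization trick that you are missing. Rather than seeking a faithful normal state on all of $\D$, choose a maximal family $\{\phi_i\} \subseteq (\D_*)_+$ with pairwise orthogonal supports $s(\phi_i) \in \D$; maximality forces $\sum_i s(\phi_i) = 1$, so some $s(\phi_j)$ meets $p$ nontrivially. Now apply Akemann's characterization to the single singular functional $\phi_j \circ \theta$ and the nonzero projection $s(\phi_j)p \in \M$: there is a nonzero projection $e \leq s(\phi_j)p$ in $\M$ with $\phi_j(\theta(e)) = 0$. The key point is that $e \leq s(\phi_j) \in \D$, so the $\D$-bimodule property gives
\[
\theta(e) = \theta\bigl(s(\phi_j)\,e\,s(\phi_j)\bigr) = s(\phi_j)\,\theta(e)\,s(\phi_j),
\]
and $\phi_j$ is faithful on $s(\phi_j)\D s(\phi_j)$ by definition of the support, forcing $\theta(e) = 0$. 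No weights, no reduction to $p = 1$, and no $\sigma$-finite case distinction are needed. The moral is that you only need a normal state that is faithful \emph{locally}, on a corner of $\D$ under which $e$ can be found, because the bimodule property confines $\theta(e)$ to that same corner.
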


\begin{proof} Let $\{\phi_i\} \subseteq (\D_*)_+$ be a maximal family
with mutually orthogonal supports $\{s(\phi_i)\} \subseteq \D$. Then
$\sum_i s(\phi_i) = 1$, and so there exists $j$ such that $s(\phi_j)p
\neq 0$. Since $\theta$ is singular, $\phi_j \circ \theta \in
(\M_*)^\perp_+$. Thus there exists a projection $0 \neq e \in \M$ such
that $e \leq s(\phi_j)p$ and $\phi_j(\theta(e)) = 0$
\cite[Thm. III.3.8]{Takesaki1979}. It follows that
$s(\phi_j)\theta(e)s(\phi_j) = 0$, which implies
\[ \theta(e) = \theta(s(\phi_j)es(\phi_j)) =
s(\phi_j)\theta(e)s(\phi_j) = 0.
\]
\end{proof}

\begin{lemma} \label{bimodule factorization} Let $(\M,\D)$ be a
$W^*$-inclusion and $\theta:\M \to \D$ be a completely positive
$\D$-bimodule map. If $\Exp(\M,\D) \neq \emptyset$, then there exists
$E \in \Exp(\M,\D)$ such that $\theta(x) = \theta(1)E(x)$, $x \in
\M$. Furthermore, if $0 \leq x \leq s(\theta(1))$ and $\theta(x) = 0$,
then $E(x) = 0$.
\end{lemma}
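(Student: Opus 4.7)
The plan is to produce $E$ by regularizing $d := \theta(1)$ and extracting a weak*-cluster point of conditional expectations.

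First, the bimodule property forces $d \in Z(\D)$: for any $d_1 \in \D$ one has $d_1 d = \theta(d_1 \cdot 1) = \theta(1 \cdot d_1) = d d_1$. Let $p := s(d) \in Z(\D)$. Applying complete positivity of $\theta$ to the positive matrix $\begin{pmatrix} p^\perp \\ y^* \end{pmatrix} \begin{pmatrix} p^\perp & y \end{pmatrix} \in M_2(\M)_+$ yields
\[
\begin{pmatrix} \theta(p^\perp) & \theta(p^\perp y) \\ \theta(y^* p^\perp) & \theta(y^* y) \end{pmatrix} \geq 0,
\]
and because $\theta(p^\perp) = p^\perp d = 0$, the off-diagonal entries must vanish. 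Hence $\theta(p^\perp y) = \theta(y p^\perp) = 0$ for every $y \in \M$, and so $\theta(x) = \theta(pxp)$ and $\theta(\M) \subseteq p\D$.

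Next, fix some $E_0 \in \Exp(\M,\D)$, which exists by hypothesis. For each $\epsilon > 0$, the elements $(d+\epsilon)^{-1}$ and $\epsilon(d+\epsilon)^{-1}$ lie in $Z(\D)_+$, so the formula
\[
E_\epsilon(x) := (d+\epsilon)^{-1}\theta(x) + \epsilon(d+\epsilon)^{-1} E_0(x), \qquad x \in \M,
\]
defines a CP $\D$-bimodule map with $E_\epsilon(1) = (d+\epsilon)^{-1}(d+\epsilon) = 1$, whence $E_\epsilon \in \Exp(\M,\D)$. The functional calculus in $Z(\D)$ gives
\[
d E_\epsilon(x) = d(d+\epsilon)^{-1}\theta(x) + \epsilon d(d+\epsilon)^{-1} E_0(x) \xrightarrow{\epsilon \to 0^+} p\theta(x) + 0 = \theta(x)
\]
in the strong operator topology, pointwise in $x$ (the second summand goes to $0$ in norm since $\|\epsilon d(d+\epsilon)^{-1}\| \leq \epsilon$).

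Now I extract a cluster point $E$ of $\{E_\epsilon\}$ in the topology of pointwise weak*-convergence on $\UCP(\M,\D)$; this topology is compact by Banach--Alaoglu together with Tychonoff, and complete positivity, unitality, the $\D$-bimodule property, and fixing of $\D$ all survive pointwise weak*-limits, so $E \in \Exp(\M,\D)$. Weak*-continuity of left multiplication by $d$ then yields $d E(x) = \theta(x) = \theta(1) E(x)$ for all $x$. For the \emph{furthermore} clause, suppose $0 \leq x \leq p$ with $\theta(x) = 0$; then $d E(x) = 0$, and centrality of $d$ gives $d \cdot E(x)^* E(x) = E(x)^* d E(x) = 0$. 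Since $d$ and $E(x)^* E(x)$ are commuting positives with product zero, their supports are orthogonal, so $s(E(x)^* E(x)) \leq p^\perp$, i.e., $p E(x) = 0$, equivalently $E(x) \in p^\perp \D p^\perp$. Combined with $0 \leq E(x) \leq E(p) = p$, sandwiching by $p^\perp$ gives $E(x) = p^\perp E(x) p^\perp \leq p^\perp p p^\perp = 0$, so $E(x) = 0$.

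The main obstacle is the cluster-point step: one cannot simply define $E(x) := d^{-1}\theta(x) + p^\perp E_0(x)$, because the generalized inverse of $d$ on $p\D$ is typically unbounded (it is bounded only when $0$ is isolated in the spectrum of $d|_{p\D}$). The bounded regularizations $E_\epsilon$ bypass this issue, at the price that the identity $d E = \theta$ only emerges in the weak*-limit.
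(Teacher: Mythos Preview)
Your proof is correct and follows essentially the same regularization idea as the paper, but with a different technical execution. The paper defines
\[
E(x) = \lim_{k \to \infty} (\theta(1) + 1/k)^{-1}\theta(x) + p^\perp E_0(x),
\]
asserting that the strong-operator limit exists by citing \cite[Lemma 5.1.6]{EffrosRuan2000}; this yields an explicit formula for $E$, so that when $\theta(x)=0$ one immediately has $E(x)=p^\perp E_0(x)$, and the ``furthermore'' clause follows in one line from $0\le E(x)\le p$. You instead package the regularizations as genuine conditional expectations $E_\epsilon$ and extract a weak*-cluster point, which avoids invoking the external convergence lemma but forfeits the explicit formula; you then recover the ``furthermore'' clause via the support argument $dE(x)=0 \Rightarrow s(E(x))\le p^\perp$. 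Both routes are valid; the paper's is shorter once the cited lemma is granted, while yours is more self-contained.
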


\begin{proof} Fix $E_0 \in \Exp(\M,\D)$. Let $p = s(\theta(1)) \in
Z(\D)$ and define
\[ E(x) = \lim_{k \to \infty} (\theta(1) + 1/k)^{-1}\theta(x) +
p^\perp E_0(x), ~ x \in \M,
\] where the limit exists in the strong operator topology (see
\cite[Lemma 5.1.6]{EffrosRuan2000}). Then $E \in \Exp(\M,\D)$ and
$\theta(x) = \theta(1)E(x)$, $x \in \M$. If $0 \leq x \leq p$ and
$\theta(x) = 0$, then $0 \leq E(x) \leq p$.  But $E(x)=p^\perp
E_0(x)$,
which implies $E(x) = 0$.
\end{proof}

\begin{lemma} \label{all faithful implies all normal} Let $(\M,\D)$ be
a $W^*$-inclusion. If every conditional expectation $\M \to \D$ is
faithful, then every conditional expectation $\M \to \D$ is normal.
\end{lemma}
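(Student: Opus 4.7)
The plan is to argue by contradiction: suppose some $E \in \Exp(\M,\D)$ fails to be normal, and produce a non-faithful conditional expectation in $\Exp(\M,\D)$. I would first decompose $E = E_n + E_s$ via the normal/singular decomposition for bounded linear maps between $W^*$-algebras. Both pieces remain completely positive, and the $\D$-bimodule property $E(dxd') = dE(x)d'$ is inherited by each piece: this follows from uniqueness of the decomposition together with the facts that pre- and post-composition with multiplication by elements of $\D$ carry normal maps to normal maps and singular maps to singular maps. Since $E$ is not normal, $E_s \neq 0$, and the cp identity $\|E_s\| = \|E_s(1)\|$ forces $E_s(1) \neq 0$.

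Next I would use the bimodule property of $E_s$ to conclude $dE_s(1) = E_s(d) = E_s(1)d$ for all $d \in \D$, so that $E_s(1) \in Z(\D)$ and its support projection $p := s(E_s(1))$ is a nonzero central projection in $\D$. Lemma \ref{singular cp}, applied to the singular cp $\D$-bimodule map $E_s$ and to $p$, then yields a nonzero projection $e \in \M$ with $e \leq p$ and $E_s(e) = 0$.

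Finally, since $\Exp(\M,\D)$ is nonempty (it contains $E$), Lemma \ref{bimodule factorization} applied to $\theta = E_s$ delivers a conditional expectation $E' \in \Exp(\M,\D)$ such that $E_s(x) = E_s(1) E'(x)$ for all $x \in \M$, with the additional property that $E'(x) = 0$ whenever $0 \leq x \leq s(E_s(1)) = p$ and $E_s(x) = 0$. Taking $x = e$ gives $E'(e) = 0$ with $e \neq 0$, so $E'$ is a non-faithful conditional expectation, contradicting the hypothesis. Hence $E_s = 0$ and $E = E_n$ is normal.

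The main obstacle is verifying that the normal/singular decomposition of a cp $\D$-bimodule map preserves both complete positivity and the bimodule structure. This is a standard-but-not-entirely-trivial fact: complete positivity of each piece follows by tensoring with $\id_{M_n}$ and invoking uniqueness, while the bimodule property is inherited because for fixed $d, d' \in \D$ the decomposition of the identity $E(d \cdot d') = d E(\cdot) d'$ as a sum of a normal and a singular map is unique. Once these technicalities are in hand, the remainder of the argument is a direct chain of applications of the two preceding lemmas.
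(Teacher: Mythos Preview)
Your proposal is correct and follows essentially the same route as the paper's proof: decompose $E$ into normal and singular parts (the paper cites Tomiyama for the fact that both pieces are cp $\D$-bimodule maps, whereas you sketch a direct verification), then feed the singular part into Lemma~\ref{singular cp} followed by Lemma~\ref{bimodule factorization} to manufacture a non-faithful conditional expectation. The extra details you supply---why $E_s(1)\neq 0$ and why its support lies in $Z(\D)$---are precisely the points the paper leaves implicit.
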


\begin{proof} Let $E:\M \to \D$ be a conditional expectation. By
\cite{Tomiyama1959}, there exist completely positive $\D$-bimodule
maps $\theta_n, \theta_s:\M \to \D$ such that $\theta_n$ is normal,
$\theta_s$ is singular, and $E = \theta_n + \theta_s$. Assume that
$\theta_s \neq 0$. By Lemma \ref{singular cp}, there exists a
projection $0 \neq e \in \M$ such that $e \leq s(\theta_s(1))$ and
$\theta_s(e) = 0$. By Lemma \ref{bimodule factorization}, there exists
a conditional expectation $E_s:\M \to \D$ such that $E_s(e) = 0$, a
contradiction. Thus $E = \theta_n$ is normal.
\end{proof}

\begin{proposition} \label{all faithful implies unique} Let $(\M,\D)$
be a $W^*$-inclusion, with $\D$ injective. If every pseudo-expectation
$\Phi \in \PsExp(\M,\D)$ is faithful, then $(\M,\D)$ has the unique
pseudo-expectation property.  In this situation, the unique
pseudo-expectation is a conditional expectation which is faithful and normal.
\end{proposition}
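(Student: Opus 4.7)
Since $\D$ is injective we may take $I(\D) = \D$, so every pseudo-expectation for $(\M, \D)$ is in fact a conditional expectation, and $\PsExp(\M, \D) = \Exp(\M, \D)$ is nonempty. Under the hypothesis, every element of $\Exp(\M, \D)$ is faithful, and Lemma \ref{all faithful implies all normal} then forces each conditional expectation to be normal. The final assertion ``faithful, normal conditional expectation'' therefore falls out as soon as uniqueness is established, so the core task is to show $\Exp(\M, \D)$ is a singleton.

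My plan for uniqueness mirrors the logic of Lemma \ref{all faithful implies all normal}: assume $E_1, E_2 \in \Exp(\M, \D)$ are distinct and manufacture from their difference a nonzero CP $\D$-bimodule map $\theta : \M \to \D$ together with a nonzero projection $e \in \M$ satisfying $e \leq s(\theta(1))$ and $\theta(e) = 0$; Lemma \ref{bimodule factorization} then produces $E \in \Exp(\M, \D)$ with $E(e) = 0$, contradicting the hypothesis. Concretely, the self-adjoint completely bounded normal $\D$-bimodule map $F := E_1 - E_2$ has $F(1) = 0$, and I would decompose $F = F_+ - F_-$ with $F_\pm$ completely positive $\D$-bimodule maps via a Wittstock/Haagerup-style decomposition for CB bimodule maps. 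The bimodule property together with $F(1) = 0$ forces the common value $c := F_+(1) = F_-(1)$ to lie in $Z(\D)_+$; if $c = 0$ then each $F_\pm$ (being CP with $F_\pm(1) = 0$) must vanish, contradicting $F \neq 0$, so $c \neq 0$. Moreover, by the Tomiyama normal--singular split used in Lemma \ref{all faithful implies all normal}, both $F_\pm$ are forced to be normal, else their singular parts would yield non-faithful conditional expectations via Lemmas \ref{singular cp} and \ref{bimodule factorization}.

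The hardest step will be extracting a projection $e \leq s(c)$ with $F_+(e) = 0$ (or symmetrically for $F_-$). In the commutative case this is automatic from the disjoint supports of $F_\pm$. In general, I would take the Wittstock decomposition to be minimal, so that $F_+$ and $F_-$ are mutually singular in the CP order (no common nonzero CP minorant), and then adapt the support-projection argument in the proof of Lemma \ref{singular cp}: choose a maximal family of normal positive functionals on $\D$ with mutually orthogonal supports summing to $1$, pick one whose support meets $s(c)$ nontrivially, and use the mutual singularity of $F_+$ and $F_-$ to locate a projection below $s(c)$ annihilated by one of them. Applying Lemma \ref{bimodule factorization} to that map then contradicts faithfulness of the resulting CE. Once uniqueness is secured, the unique pseudo-expectation is faithful by hypothesis and normal by Lemma \ref{all faithful implies all normal}, and is a conditional expectation because its range lies in $\D$.
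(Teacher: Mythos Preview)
Your setup is fine through the first paragraph: injectivity of $\D$ collapses pseudo-expectations to conditional expectations, and Lemma \ref{all faithful implies all normal} gives normality. But the uniqueness argument has a genuine gap at precisely the point you flag as ``the hardest step.''

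The adaptation of Lemma \ref{singular cp} you propose does not go through. That lemma produces a projection annihilated by $\theta$ from the hypothesis that $\theta$ is \emph{singular} in Tomiyama's sense, meaning $\phi \circ \theta \in (\M_*)^\perp$ for every normal $\phi$; the key input is Takesaki's characterization of singular positive functionals (every nonzero projection dominates a nonzero subprojection on which the functional vanishes). Your $F_+$ and $F_-$ are, by your own argument, \emph{normal}, so this mechanism is unavailable. Replacing Tomiyama-singularity with ``mutual singularity in the CP order'' (no common nonzero CP minorant) is a different hypothesis, and there is no reason it yields a projection on which one of the maps vanishes---indeed, nothing prevents both $F_+$ and $F_-$ from being faithful on $s(c)\M s(c)$ while still having no common CP minorant. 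You would also need to justify that a bimodule Wittstock decomposition exists and that a minimal one has mutually singular parts, neither of which is immediate.

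The paper sidesteps all of this by reducing to the relative commutant. Since some pseudo-expectation is faithful, Theorem \ref{not ARC} gives that $\D^c$ is abelian. Theorem \ref{CE bijection} then sets up a bijection between conditional expectations $C^*(\D,\D^c) \to \D$ and conditional expectations $\D^c \to Z(\D)$, so faithfulness of every $E \in \Exp(\M,\D)$ forces every conditional expectation $\D^c \to Z(\D)$ to be faithful. But $\D^c$ is an abelian $W^*$-algebra containing $Z(\D)$, so in particular every \emph{multiplicative} conditional expectation $\D^c \to Z(\D)$ is faithful, which forces $\D^c = Z(\D)$. Uniqueness then follows from the Combes--Delaroche bijection $E \mapsto E|_{\D^c}$ between faithful normal conditional expectations $\M \to \D$ and those $\D^c \to Z(\D)$. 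The essential move you are missing is to exploit the abelian structure of $\D^c$ rather than attack the difference $E_1 - E_2$ head-on.
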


\begin{proof} Since $\D$ is injective, $\PsExp(\M,\D) =
\Exp(\M,\D)$. By Lemma \ref{all faithful implies all normal}, every
conditional expectation $\M \to \D$ is faithful and normal. By
\cite[Thm. 5.3]{CombesDelaroche1975}, the map $E \mapsto E|_{\D^c}$ is
a bijection between the faithful normal conditional expectations $\M
\to \D$ and the faithful normal conditional expectations $\D^c \to
Z(\D)$. Thus to show that there exists a unique conditional
expectation $\M \to \D$, it suffices to show that there exists a
unique faithful normal conditional expectation $\D^c \to Z(\D)$. In
fact, we will show that $\D^c = Z(\D)$.

By Theorem \ref{not ARC}, $\D^c$ is abelian. Since every conditional
expectation $\M \to \D$ is faithful, every conditional expectation
$C^*(\D,\D^c) \to \D$ is faithful, which implies every conditional
expectation $\D^c \to Z(\D)$ is faithful, by Theorem \ref{CE
bijection}. In particular, every multiplicative conditional
expectation $\D^c \to Z(\D)$ is faithful, which implies $\D^c =
Z(\D)$.
\end{proof}

\end{document}